\newcommand{\nn}{\nonumber}
\renewcommand{\tilde}{\widetilde}
\def\bC {\mathbf{C}}
\def\bN {\mathbf{N}}
\def\bR {\mathbf{R}}
\def\fH {\mathfrak{H}}
\def\cA {\mathcal{A}}
\def\cC {\mathcal{C}}
\def\cD {\mathcal{D}}
\def\cE {\mathcal{E}}
\def\cH {\mathcal{H}}
\def\cK {\mathcal{K}}
\def\cL {\mathcal{L}}
\def\cN {\mathcal{N}}
\def\cP {\mathcal{P}}
\def\cQ {\mathcal{Q}}
\def\cR {\mathcal{R}}
\def\cS {\mathcal{S}}
\def\cV {\mathcal{V}}
\def\cW {\mathcal{W}}
\def\scrH{\mathscr{H}}
\def\a {{\alpha}}
\def\b {{\beta}}
\def\Ga {{\Gamma}}
\def\de {{\delta}}
\def\eps {{\epsilon}}
\def\l {{\lambda}}
\def\L {{\Lambda}}
\def\si {{\sigma}}
\def\om {{\omega}}
\def\Om {{\Omega}}
\def\d {{\partial}}
\def\grad {{\nabla}}
\def\Dlt {{\Delta}}
\def\rstr {{\big |}}
\def\indc {{\bf 1}}
\def\la {\langle}
\def\ra {\rangle}
\def \La {\bigg\langle}
\def \Ra {\bigg\rangle}
\def \lA {\big\langle \! \! \big\langle}
\def \rA {\big\rangle \! \! \big\rangle}
\newcommand{\Dom}{\operatorname{Dom}}
\newcommand{\fDom}{\operatorname{Form-Dom}}
\newcommand{\Span}{\operatorname{span}}
\newcommand{\Tr}{\operatorname{trace}}
\newcommand{\Ker}{\operatorname{Ker}}
\newcommand{\Img}{\operatorname{Ran}}
\newcommand{\MKd}{\operatorname{dist_{MK,2}}}
\newcommand{\Op}{\operatorname{OP}}
\def\hb {{\hbar}}
\newcommand{\ba}{\begin{aligned}}
\newcommand{\ea}{\end{aligned}}
\newcommand{\be}{\begin{equation}}
\newcommand{\ee}{\end{equation}}
\newcommand{\lb}{\label}
\newtheorem{Thm}{Theorem}[section]
\newtheorem{Cor}[Thm]{Corollary}
\newtheorem{Lem}[Thm]{Lemma}
\newtheorem{Def}[Thm]{Definition}
\begin{document}

\title[Quantum and Semiquantum Pseudometrics]{Quantum and Semiquantum Pseudometrics and applications}

\author[F. Golse]{Fran\c cois Golse}
\address[F.G.]{CMLS, \'Ecole polytechnique, CNRS, Universit\'e Paris-Saclay , 91128 Palaiseau Cedex, France}
\email{francois.golse@polytechnique.edu}

\author[T. Paul]{Thierry Paul}
\address[T.P.]{CNRS \& LJLL Sorbonne Universit\'e 4 place Jussieu 75005 Paris, France}
\email{thierry.paul@upmc.fr}

\begin{abstract}
We establish a Kantorovich duality for he pseudometric $\cE_\hb$ introduced in [F. Golse, T. Paul, Arch. Rational Mech. Anal. \textbf{223} (2017), 57--94], obtained from the usual Monge-Kantorovich distance $\MKd$ between classical densities  by quantization of one side  of the two densities  involved. We show several type of inequalities comparing $\MKd$, $\cE_\hb$ and $MK_\hb$, a full quantum analogue of $\MKd$ introduced in [F. Golse,  C. Mouhot, T. Paul, 
Commun. Math. Phys. \textbf{343} (2016), 165--205], including an up to $\hbar$  triangle inequality for $MK_\hb$. Finally, we show that, when nice  optimal Kantorovich potentials exist for $\cE_\hb$, optimal couplings  induce classical/quantum optimal transports and  the  potentials are linked by a semiquantum Legendre
%-Lax-Hopf 
type transform.
\end{abstract}

\date{\today}

\maketitle

\tableofcontents

\section{Introduction and statement of some main results}\label{intro}

The Monge-Kantorovich distance, also called
Wasserstein distance, of exponent two on the phase-space $T^*\bR^{d}\sim \bR^{2d}$ is defined, for two probability measures 
%$\mu,\nu\in\cP(\bR^{2d})$ 
by
\be\label{defmkd}
\MKd(\mu,\nu)^2
=
\inf_{\pi\in\pi[\mu.\nu]}\int_{\bR^{2d}\times\bR^{2d}}((q-q')^2+(q-p')^2)\pi(dqdp,dq'dp')
\ee
where $\pi[\mu,\nu]$ is the set of couplings $\pi$ of $\mu,\nu$, i.e. the set of probability measures $\pi$ on $\bR^{2d}\times\bR^{2d}$ such that for all test functions $a,b\in C_c(\bR^{2d})$ we have that
%$\ \int_{\bR^{2d}\times\bR^{2d}}a(q,p)\pi(dqdp,dq'dp')=\int_{\bR^{2d}}a(q,p)\mu{dqdp}$ and $ 
%\int_{\bR^{2d}\times\bR^{2d}}b(q',p')\pi(dqdp,dq'dp')=\int_{\bR^{2d}}b(q',p')\nu{dq'dp'}$.
$$
\int_{\bR^{2d}\times\bR^{2d}}(a(q,p)\!+\!b(q',p'))\pi(dqdp,dq'dp')\!=\!\!\!\int_{\bR^{2d}}(a(q,p)\mu(dqdp)\!+\!b(q',p')\nu(dq'dp')).$$

Among the many properties of $\MKd$, let us mention the Kantorovich duality wich stipulates that
\be\label{kantowass}
\MKd(\mu,\nu)^2
=
\max_{\substack{a,b\in C_b(\bR^{d})\\
a(q,p)+b(q',p')\leq (q-q')^2+(p-p')^2}}
\int_{\bR^{2d}}(a(q,p)\mu(dqdp)+b(q,p)\nu(dqdp),
\ee
and the Knott-Smith-Brenier Theorem which says that, under certain conditions on $\mu,\nu$,  any coupling $\pi_{op}$ satisfying
\be
\MKd(\mu,\nu)^2
=
\int_{\bR^{2d}\times\bR^{2d}}((q-q')^2+(q-p')^2)\pi_{op}(dqdp,dq'dp')
\ee
is supported in the graph of the convex function  $\tfrac12(q^2+p^2)-a_{op}(q,p)$ where $a_{op}$ is an optimal function such that $a_{op}, b_{op}$ provide the max in \eqref{kantowass} for some $b_{op}$.

\noindent 
Finally,  $\tfrac12(q^2+p^2)-a_{op}(q,p)$ and  $\tfrac12(q^2+p^2)-b_{op}(q,p)$ are proven to be the Legendre transform  of each other.
\vskip 1cm
A quantum version of $\MKd$ was proposed in \cite{FGMouPaul} following the general rules of quantization consisting in replacing
\begin{itemize}
\item probability measures $\mu.\nu$ on phase-space $T^*\bR^{d}$  by quantum states $R,S$, i.e. density operators, i.e. positive trace one operators on $L^2(\bR^d)$
\item $\int_{T^*\bR^{d}}$ by $\Tr_{L^2(\bR^d)}$
\item couplings of $\mu,\nu$ 
by density operators $\Pi$ on 
$L^2(\bR^d)
\otimes L^2(\bR^d)
$ 
such that, for any bounded operators $A,B$, $\Tr_{L^2(\bR^d)\otimes L^2(\bR^d)}{(A\otimes I)\Pi)}=\Tr_{L^2(\bR^d)}{AS}$ and $\ \Tr_{L^2(\bR^d)\otimes L^2(\bR^d)}{(I\otimes B)\Pi)}=\Tr_{L^2(\bR^d)}{BR}$.
% for any bounded operators $A,B$ on $L^2(\bR^d)$.
\item the cost function $(q-q')^2+(q-p')^2$ by its Weyl pseudodifferential quantization 
$C=(x-x')^2+
(-i\hbar\nabla_x+i\hbar\nabla_{x'})^2
$ on 
$L^2(\bR^d\times\bR^d)$.
\end{itemize}
These considerations lead to the definition, for two density operators $R,S$ on $L^2(\bR^2)$,
\be\label{defmkhbar}
MK_\hb(R,S)^2=\inf_{\Pi\mbox{ coupling }R\mbox{ and }S}\Tr_{L^2(\bR^d)\otimes L^2(\bR^d)}{C\Pi}.
\ee

The pseudometric $MK_\hb$ has been extensively studied in \cite{FGMouPaul}, with applications to the study of the quantum mean-field limit uniformly in $\hbar$, used in \cite{ECFGTPaul} for quantum optimal transport considerations and applied in \cite{ECFGTPaul2} for the quantum bipartite matching  problem. In particular, a Kantorovich duality was proven for $MK_\hb$ in \cite{ECFGTPaul} expressed as the following identity
\be\label{dulamkhbar}
MK_\hb(R,S)^2=\sup_{\substack{A=A^*,B=B^*\in\cL(L^2(\bR^d))\\ A\otimes I+I\otimes B\leq C}}\Tr{(AR+BS)}
\ee
and the supremum was proven to be attended  for two oparors $\bar A,\bar B$ defined respectively on two Gelfand triplest surrounding $L^2(\bR^d)$ (see \cite{ECFGTPaul})

Though $MK_\hb$ is symmetric in its argument, it is not a distance as one can easily show (\cite{FGMouPaul}) that $MK^2_\hb\geq 2d\hbar$. Nevertheless, one of the main result of this article will be to prove the following (approximate) triangle inequality, valid for density operators $R,S,T$ (see Theorem \ref{T-IneqT2} $(iii)$ below)
\be\label{tri}
MK_\hb(R,T)
\leq MK_\hb(R,S)+
MK_\hb(S,T)+d\hb.
\ee

Actually, \eqref{tri} is proved by using a kind of ``semiquantum" generalisation of $\MKd$, defined in \cite{FGPaul} and constructed by, roughly speaking, applying the quantization rule aforementioned to only one on the two parts involved in $\MKd(\mu,\nu)$: 

for $f$ probability density on $\bR^{2d}$ and $R$ density operator on $L^2(\bR^d)$ we define
\be\label{defarma}
\cE_\hb(f,R)^2=\sup_{\substack{\Pi(q,p)\\
 density\ operators\\ such\ that\\ 
%\  on\ $L^2(\bR^d)$\ for\ a.e.\ (q,p)\\
\Tr\Pi(q,p)=f(q,p)\\and \ \int_{\bR^{2d}}\Pi(q,p)=R
}}
\int_{\bR^{2d}}\Tr_{L^2(\bR^d,dx)}{((q-x)^2+(p+i\hbar\nabla_x)^2)\Pi(q,p)}dqdp.
\ee
The pseudometric $\cE_\hb$ has been used in \cite{FGPaul} in order to derive  several results concerning the quantum, uniform in $\hbar$, mean-field derivation and in \cite{FGPaul,FGPaul2} for semicalssical propagation estimates involving low regularity of the potential and the initial data (in particular with respect to the dimension, i.e.  also to the number of particles present in the quantum evolution).

In the present paper, we  prove a Kantorovich duality for $\cE_\hb$ (Section \ref{kantodual}, Theorem \ref{T-DualityARMA}), namely
\be\label{dualce}
\cE_\hb(f,R)^2
=\sup_{\substack{a\in C_b(\bR^{2d}),\ B\in\cL(L^2(\bR^d))\\
a(q,p)+B\leq (q-x)^2+(p+i\hbar\nabla_x)^2}}
\int_{\bR^{2d}}a(q,p)f(q,p)dqdp+\Tr_{L^2(\bR^d)}BR,
\ee
 and then apply this duality to derive inequalities, such as \eqref{tri}, involving $MK_\hb,\ \cE_\hb$ and $\MKd$, Theorems \ref{T-LBdMKE} and \ref{T-IneqT2}.

In the last section of the paper, Section \ref{app3}, we investigate the semiquantum analogue of the Knott-Smith-Brenier Theorem and a semiquantum analogue of the Legendre transform: if
$$\cE_\hb(f,R)^2=
%\sup_{\substack{a\in C_b(\bR^{2d}),\ B\in\cL(L^2(\bR^d))\\
%a(q,p)+B\leq (q-x)^2+(p+i\hbar\nabla_x)^2}}
\int_{\bR^{2d}}a_{op}(q,p)f(q,p)dqdp+\Tr_{L^2(\bR^d)}B_{op}R,
$$ then $a(q,p):=\tfrac12(p^2+q^2-a_{op}(q,p))$  is the semiquantum-Legendre transform of  $B:=\tfrac12(-\nabla_x^2+x^2-B_{op})$, in the sense that
$$
a(q,p)=\sup_{\phi\in 
%L^2(\mathbb R^d)
\Dom(B)}(q\cdot\langle\phi| x|\phi\rangle+p\cdot\langle\phi|-i\hbar\nabla_x|\phi\rangle-
\langle\phi|B|\phi\rangle).
$$
%%%%%%%%%%%%%%%%%%%%%%%%%%%%%%%%%%%%%%%%%%%%%%%%%%%%%%%%%%%%%%%%%%%%%%%%%%%%%%%%%%%%%%%%%%%%%%%%%%%%%%%%%%%%%%%%%%%%%%
\section{Preliminaries}
%%%%%%%%%%%%%%%%%%%%%%%%%%%%%%%%%%%%%%%%%%%%%%%%%%%%%%%%%%%%%%%%%%%%%%%%%%%%%%%%%%%%%%%%%%%%%%%%%%%%%%%%%%%%%%%%%%%%%%%%

We have gathered together in this section some functional analytic remarks used repeatedly in the sequel.

\subsection{Monotone Convergence}
%%%%%%%%%%%%%%%%%%%%%%%%%%%%%%%%%%%%%%%%%%%%%%%%%%%%%%%%%%%%%%%%%%%%%%%%%%%%%%%%%%%%%%%%%%%%%%%%%%%%%%%%%%%%%%%%%%%%%%%%

We recall the analogue of the Beppo Levi monotone convergence theorem for operators in the form convenient for our purpose.

Let $\cH$ be a separable Hilbert space and $0\le T=T^*\in\cL(\fH)$. For each complete orthonormal system $(e_j)_{j\ge 1}$ of $\cH$, set
$$
\Tr_\cH(T)=\|T\|_1:=\sum_{j\ge 1}\la e_j|T|e_j\ra\in[0,+\infty]\,.
$$
See Theorem 2.14 in \cite{Simon}; in particular the expression on the last right hand side of these equalities is independent of the complete orthonormal system $(e_j)_{j\ge 1}$. Then
$$
T\in\cL^1(\fH)\iff\|T\|_1<\infty\,.
$$

\begin{Lem}[Monotone convergence]\lb{L-Monotone}
Consider a sequence $T_n=T_n^*\in\cL^1(\cH)$ such that
$$
0\le T_1\le T_2\le\ldots\le T_n\le\ldots\,,\qquad\text{ and }\sup_{n\ge 1}\la x|T_n|x\ra<\infty\text{ for all }x\in\cH\,.
$$
Then

\noindent
(a) there exists $T=T^*\in\cL(\cH)$ such that $T_n\to T$ weakly as $n\to\infty$, and

\noindent
(b) $\Tr_\cH(T_n)\to\Tr_\cH(T)$ as $n\to\infty$.
\end{Lem}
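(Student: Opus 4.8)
The plan is to treat part (a) as a routine application of the uniform boundedness principle together with polarization, and part (b) as Tonelli's theorem for a double series of non-negative numbers. I expect the only genuinely delicate point to be obtaining a uniform bound $\sup_n\|T_n\|<\infty$ in (a): this cannot be argued pointwise and must go through Banach--Steinhaus; everything else is bookkeeping, and one should also keep in mind that the trace appearing in (b) is basis-independent, which is exactly the cited Theorem 2.14 of \cite{Simon} applied to the positive operator $T$.

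\emph{Part (a).} First I would observe that for each fixed $x\in\cH$ the scalar sequence $n\mapsto\la x|T_n|x\ra$ is non-decreasing, since $T_{n+1}-T_n\ge 0$, and bounded above by hypothesis, hence convergent to some $q(x)\in[0,\infty)$. To promote this to convergence of the operators I need $\sup_n\|T_n\|<\infty$: since each $T_n$ is positive one may write $T_n=(T_n^{1/2})^*T_n^{1/2}$, so that $\|T_n^{1/2}x\|^2=\la x|T_n|x\ra$ is bounded uniformly in $n$ for every fixed $x$, and the uniform boundedness principle applied to the family $\{T_n^{1/2}\}\subset\cL(\cH)$ yields $M:=\sup_n\|T_n\|=\sup_n\|T_n^{1/2}\|^2<\infty$. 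Next, by the polarization identity $\la x|T_n|y\ra$ is a fixed linear combination of four terms of the form $\la z|T_n|z\ra$, each convergent, so $\la x|T_n|y\ra$ converges for all $x,y\in\cH$ to a sesquilinear form $b$ with $|b(x,y)|\le M\|x\|\,\|y\|$. By the representation theorem for bounded sesquilinear forms there is a unique $T\in\cL(\cH)$ with $\la x|T|y\ra=b(x,y)$; it is self-adjoint (each $T_n$ being so, $b(x,y)=\overline{b(y,x)}$), it satisfies $\la x|T|x\ra=q(x)\ge 0$, and in fact $T_n\le T$ for every $n$. By construction $T_n\to T$ in the weak operator topology, which is (a).

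\emph{Part (b).} Fix a complete orthonormal system $(e_j)_{j\ge 1}$. Set $a^{(n)}_j:=\la e_j|T_n|e_j\ra\ge 0$; for each $j$ this is non-decreasing in $n$ with $\lim_n a^{(n)}_j=\la e_j|T|e_j\ra$, while $\Tr_\cH(T_n)=\sum_j a^{(n)}_j$ is itself non-decreasing in $n$. Exchanging the two suprema --- legitimate since all terms are non-negative and a finite sum of monotone sequences has supremum the sum of the suprema --- gives
\[
\lim_{n\to\infty}\Tr_\cH(T_n)=\sup_n\sum_j a^{(n)}_j=\sup_N\sum_{j=1}^N\sup_n a^{(n)}_j=\sum_j\la e_j|T|e_j\ra=\Tr_\cH(T),
\]
an identity valid in $[0,+\infty]$; here the last equality uses the basis-independence of $\Tr_\cH$ on positive operators (Theorem 2.14 of \cite{Simon}). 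Observe finally that the lemma does not assert $T\in\cL^1(\cH)$: if $\sup_n\Tr_\cH(T_n)=+\infty$ then $\Tr_\cH(T)=+\infty$ too, and (b) still holds in $[0,+\infty]$.
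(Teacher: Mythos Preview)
Your proof is correct and reaches the same conclusions, but the two arguments differ in their packaging at both steps. For the uniform bound in (a), you apply Banach--Steinhaus to the square roots $T_n^{1/2}$, using $\|T_n^{1/2}x\|^2=\la x|T_n|x\ra$; the paper instead runs the Baire category argument by hand on the sets $F_k=\{x:\la x|T_n|x\ra\le k\text{ for all }n\}$ and extracts the bound from an interior point. These are of course the same idea at bottom (Baire is what proves Banach--Steinhaus), but your square-root trick is tidier. For (b), you use a direct Tonelli-type interchange of suprema on the non-negative array $a_j^{(n)}=\la e_j|T_n|e_j\ra$, which is entirely self-contained; the paper argues one inequality from $T_n\le T$ and then invokes the noncommutative Fatou lemma (Theorem~2.7(d) in \cite{Simon}) for the reverse inequality when $\sup_n\Tr(T_n)<\infty$. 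Your route avoids that external citation at the cost of being slightly more combinatorial; both are short, and both correctly leave open the possibility $\Tr_\cH(T)=+\infty$.
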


\begin{proof}
Since the sequence $\la x|T_n|x\ra\in[0,+\infty)$ is nondecreasing for each $x\in\cH$, 
$$
\la x|T_n|x\ra\to\sup_{n\ge 1}\la x|T_n|x\ra=:q(x)\in[0,+\infty)\quad\text{ for all }x\in\cH
$$
as $n\to\infty$. Hence
$$
\la x|T_n|y\ra=\la y|T_n|x\ra\to\tfrac14(q(x+y)-q(x-y)+iq(x-iy)-iq(x+iy))=:b(x,y)\in\bC
$$
as $n\to+\infty$. By construction, $b$ is a nonnegative sesquilinear form on $\cH$. 

Consider, for each $k\ge 0$,
$$
F_k:=\{x\in\cH\text{ s.t. }\la x|T_n|x\ra\le k\text{ for each }n\ge 1\}\,.
$$
The set $F_k$ is closed for each $k\ge 0$, being the intersection of the closed sets defined by the inequality $\la x|T_n|x\ra\le k$ as $n\ge 1$. Since the sequence $\la x|T_n|x\ra$ is bounded for each $x\in\cH$, 
$$
\bigcup_{k\ge 0}F_k=\cH\,.
$$
Applying Baire's theorem shows that there exists $N\ge 0$ such that $\mathring{F}_N\not=\varnothing$. In other words, there exists $r>0$ and $x_0\in\cH$ such that
$$
|x-x_0|\le r\implies|\la x|T_n|x\ra|\le N\text{ for all }n\ge 1\,.
$$
By linearity and positivity of $T_n$, this implies
$$
|\la z|T_n|z\ra|\le \tfrac2r(M+N)\|z\|^2\text{ for all }n\ge 1\,,\quad\text{ with }M:=\sup_{n\ge 1}\la x_0|T_n|x_0\ra\,.
$$
In particular
$$
\sup_{|z|\le 1}q(z)\le\tfrac2r(M+N)\,,\quad\text{ so that }|b(x,y)|\le\frac2r(M+N)|\|x\|_\cH\|y\|_\cH
$$
for each $x,y\in\cH$ by the Cauchy-Schwarz inequality. By the Riesz representation theorem, there exists $T\in\cL(\cH)$ such that
$$
T=T^*\ge 0\,,\quad\text{ and }\quad b(x,y)=\la x|T|y\ra\,.
$$
This proves (a). Observe that $T\ge T_n$ for each $n\ge 1$, so that 
$$
\sup_{n\ge 1}\Tr_\cH(T_n)\le\Tr_\cH(T)\,.
$$
In particular
$$
\sup_{n\ge 1}\Tr_\cH(T_n)=+\infty\implies\Tr_\cH(T)=+\infty\,.
$$
Since the sequence $\Tr_\cH(T_n)$ is nondecreasing,
$$
\Tr_\cH(T_n)\to\sup_{n\ge 1}\Tr_\cH(T_n)\quad\text{ as }n\to\infty\,.
$$
By the noncommutative variant of Fatou's lemma (Theorem 2.7 (d) in \cite{Simon}), 
$$
\sup_{n\ge 1}\Tr_\cH(T_n)<\infty\implies T\in\cL^1(\cH)\text{ and }\Tr_\cH(T)\le\sup_{n\ge 1}\Tr_\cH(T_n)\,.
$$
Since the opposite inequality is already known to hold, this proves (b).
\end{proof}

\smallskip
Here is a convenient variant of this lemma.

\begin{Cor}\lb{C-Monotone}
Consider a sequence $T_n=T_n^*\in\cL^1(\cH)$ such that
$$
0\le T_1\le T_2\le\ldots\le T_n\le\ldots\,,\qquad\text{ and }\sup_{n\ge 1}\Tr_{\cH}(T_n)<\infty\,.
$$
Then there exists $T\in\cL^1(\fH)$ such that $T_n\to T$ weakly as $n\to\infty$, and
$$
T=T^*\ge 0\,,\quad\text{ and }\quad\Tr_\cH(T)=\lim_{n\to\infty}\Tr_\cH(T_n)\,.
$$
\end{Cor}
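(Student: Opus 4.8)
The plan is to reduce Corollary~\ref{C-Monotone} to Lemma~\ref{L-Monotone}: the monotonicity hypothesis $0\le T_1\le T_2\le\ldots$ is common to both statements, so the only thing I would have to verify is that the uniform trace bound $\sup_{n\ge1}\Tr_\cH(T_n)<\infty$ entails the pointwise quadratic-form bound $\sup_{n\ge1}\la x|T_n|x\ra<\infty$ required in the lemma.

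First I would record the elementary inequality $\|A\|_{\cL(\cH)}\le\Tr_\cH(A)=\|A\|_1$, valid for any $0\le A=A^*\in\cL^1(\cH)$: diagonalising $A$ in an orthonormal eigenbasis, its operator norm is the largest eigenvalue, which is dominated by the sum of the (nonnegative) eigenvalues, i.e. by $\Tr_\cH(A)$. Applying this with $A=T_n$ gives, for every $x\in\cH$ and every $n\ge1$,
$$
0\le\la x|T_n|x\ra\le\|T_n\|_{\cL(\cH)}\|x\|^2\le\Tr_\cH(T_n)\|x\|^2\le\Big(\sup_{m\ge1}\Tr_\cH(T_m)\Big)\|x\|^2<\infty\,,
$$
so that $\sup_{n\ge1}\la x|T_n|x\ra<\infty$ for each $x\in\cH$, which is exactly the remaining hypothesis of Lemma~\ref{L-Monotone}.

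Then I would invoke that lemma: part~(a) produces $T=T^*\in\cL(\cH)$ with $T_n\to T$ weakly, and part~(b) gives $\Tr_\cH(T_n)\to\Tr_\cH(T)$. Since the sequence $\Tr_\cH(T_n)$ is nondecreasing and bounded, its limit equals its supremum and is finite, so $\Tr_\cH(T)=\lim_{n\to\infty}\Tr_\cH(T_n)<\infty$, i.e.\ $T\in\cL^1(\cH)$. Positivity follows from $\la x|T|x\ra=\lim_{n\to\infty}\la x|T_n|x\ra\ge0$ for all $x\in\cH$ (which also re-confirms $T=T^*$). This yields all the asserted conclusions. I do not expect any genuine obstacle here: the single substantive point is the norm-versus-trace inequality for positive operators, which is precisely what upgrades the uniform trace bound into the uniform form bound needed to apply the preceding lemma; the rest is bookkeeping.
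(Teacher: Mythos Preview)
Your proposal is correct and follows essentially the same route as the paper: verify that the uniform trace bound yields $\sup_{n\ge1}\la x|T_n|x\ra\le\|x\|^2\sup_{n\ge1}\Tr_\cH(T_n)<\infty$, then invoke Lemma~\ref{L-Monotone} (a) and (b). The only cosmetic difference is that the paper obtains this inequality by normalizing $x$ and completing it to an orthonormal system (so that $\la \hat x|T_n|\hat x\ra$ is one summand in the trace), whereas you pass through the intermediate step $\|T_n\|\le\Tr_\cH(T_n)$ via diagonalization of $T_n$; both arguments express the same elementary fact.
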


\begin{proof}
Since any $x\in\cH\setminus\{0\}$ can be normalized and completed into a complete orthonormal system of $\cH$, one has
$$
\sup_{n\ge 1}\la x|T|x\ra\le\|x\|_\cH^2\sup_{n\ge 1}\Tr_{\cH}(T_n)<\infty\,.
$$
One concludes by applying Lemma \ref{L-Monotone} (a) and (b).
\end{proof}

\subsection{Finite Energy Condition}
%%%%%%%%%%%%%%%%%%%%%%%%%%%%%%%%%%%%%%%%%%%%%%%%%%%%%%%%%%%%%%%%%%%%%%%%%%%%%%%%%%%%%%%%%%%%%%%%%%%%%%%%%%%%%%%%%%%%%%%%

In the sequel, we shall repeatedly encounter the following typical situation. Let $A=A^*\ge 0$ be an unbounded self-adjoint operator on $\cH$ with domain $\Dom(A)$, and let $E$ be its spectral decomposition.

Let $T\in\cL^1(\fH)$ satisfy $T=T^*\ge 0$, and let $(e_j)_{j\ge 1}$ be a complete orthonormal system of eigenvectors of $T$ with $Te_j=\tau_je_j$ and $\tau_j\in[0,+\infty)$ for each $j\ge 1$.

\begin{Lem}\lb{L-Energ}
Assume that
\be\lb{TrTA}
\sum_{j\ge 1}\tau_j\int_0^\infty \l\la e_j|E(d\l)|e_j\ra<\infty\,.
\ee
Then
$$
T^{1/2}AT^{1/2}:=\sum_{j,k\ge 1}\tau_j^{1/2}\tau_k^{1/2}\left(\int_0^\infty\l\la e_j|E(d\l)|e_k\ra\right)|e_j\ra\la e_k|
$$
satisfies
$$
0\le T^{1/2}AT^{1/2}=(T^{1/2}AT^{1/2})^*\in\cL^1(\cH)
$$
and 
$$
\Tr_\cH(T^{1/2}AT^{1/2})=\sum_{j\ge 1}\tau_j\int_0^\infty \l\la e_j|E(d\l)|e_j\ra\,.
$$
\end{Lem}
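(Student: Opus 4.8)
The plan is to recognize the expression defining $T^{1/2}AT^{1/2}$ as $W^*W$ for a single Hilbert--Schmidt operator $W$ which is morally $A^{1/2}T^{1/2}$, and then to read off positivity, membership in $\cL^1(\cH)$ and the trace identity from the ideal structure of the Schatten classes. The finite energy hypothesis \eqref{TrTA} will be used exactly once, to guarantee square summability of the ``columns'' of $W$.

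First I would dispose of the spectral bookkeeping. For each $j$ with $\tau_j>0$, convergence of the series in \eqref{TrTA} forces $\int_0^\infty\l\la e_j|E(d\l)|e_j\ra<\infty$, i.e. $e_j\in\Dom(A^{1/2})$ with $\|A^{1/2}e_j\|^2=\int_0^\infty\l\la e_j|E(d\l)|e_j\ra$; set $v_j:=\tau_j^{1/2}A^{1/2}e_j$, and $v_j:=0$ when $\tau_j=0$. By the Cauchy--Schwarz inequality $|\la e_j|E(S)|e_k\ra|\le\la e_j|E(S)|e_j\ra^{1/2}\la e_k|E(S)|e_k\ra^{1/2}$ for Borel $S$, the integral $\int_0^\infty\l\la e_j|E(d\l)|e_k\ra$ converges absolutely whenever $\tau_j,\tau_k>0$ and equals $\la A^{1/2}e_j|A^{1/2}e_k\ra$, so that
\[
\la v_j|v_k\ra=\tau_j^{1/2}\tau_k^{1/2}\la A^{1/2}e_j|A^{1/2}e_k\ra=\tau_j^{1/2}\tau_k^{1/2}\int_0^\infty\l\la e_j|E(d\l)|e_k\ra ,
\]
both sides vanishing when $\tau_j\tau_k=0$; hence $\la v_j|v_k\ra$ is exactly the coefficient of $|e_j\ra\la e_k|$ in the definition of $T^{1/2}AT^{1/2}$, and $\sum_{j\ge1}\|v_j\|^2=\sum_{j\ge1}\tau_j\int_0^\infty\l\la e_j|E(d\l)|e_j\ra<\infty$ by \eqref{TrTA}.

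Next I would introduce $W:=\sum_{j\ge1}|v_j\ra\la e_j|$. Since $\sum_j\|v_j\|^2<\infty$, the partial sums are Cauchy in $\cL^2(\cH)$, so $W\in\cL^2(\cH)$, $We_k=v_k$, and $\|W\|_{\cL^2}^2=\sum_j\|v_j\|^2$; using the closedness of $A^{1/2}$ one checks moreover that $W=A^{1/2}T^{1/2}$ (a bounded, everywhere defined operator) and $W^*=T^{1/2}A^{1/2}$, so that $W^*W=T^{1/2}AT^{1/2}$ as operators, which legitimizes the notation. Then $W^*W\in\cL^1(\cH)$, $W^*W=(W^*W)^*\ge0$, and $\la e_j|W^*W|e_k\ra=\la We_j|We_k\ra=\la v_j|v_k\ra$. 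Writing $Q_N$ for the orthogonal projection onto $\Span(e_1,\dots,e_N)$, one has $WQ_N\to W$ in $\cL^2(\cH)$ (because $\|W(I-Q_N)\|_{\cL^2}^2=\sum_{k>N}\|v_k\|^2\to0$), whence $Q_NW^*WQ_N=(WQ_N)^*(WQ_N)\to W^*W$ in $\cL^1(\cH)$ by continuity of the multiplication $\cL^2\times\cL^2\to\cL^1$; since $Q_NW^*WQ_N=\sum_{j,k\le N}\la v_j|v_k\ra\,|e_j\ra\la e_k|$, this identifies $W^*W$ with the operator defined by the double series in the statement, the convergence being in $\cL^1$-norm and, by the same argument over arbitrary finite exhaustions of the index set, unconditional. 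Finally
\[
\Tr_\cH(T^{1/2}AT^{1/2})=\Tr_\cH(W^*W)=\|W\|_{\cL^2}^2=\sum_{j\ge1}\|v_j\|^2=\sum_{j\ge1}\tau_j\int_0^\infty\l\la e_j|E(d\l)|e_j\ra ,
\]
the asserted identity.

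The only genuinely delicate point is the second paragraph: one must be careful to extract from \eqref{TrTA} that the relevant $e_j$ lie in $\Dom(A^{1/2})$ (not merely in $\Dom(A)$, which may fail), and that the off-diagonal integrals $\int_0^\infty\l\la e_j|E(d\l)|e_k\ra$ converge absolutely, so that the coefficients of $T^{1/2}AT^{1/2}$ are unambiguously defined and the operator exists at all; the Cauchy--Schwarz bound for spectral measures is what furnishes both facts and simultaneously controls the double series in $\cL^1$-norm by $\sum_j\|v_j\|^2$. Everything downstream is routine manipulation in the ideals $\cL^2$ and $\cL^1$.
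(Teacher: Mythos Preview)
Your proof is correct. Both your argument and the paper's rest on the same key estimate---the Cauchy--Schwarz inequality for the spectral measure, giving $|a_{jk}|^2\le a_{jj}a_{kk}$ with $a_{jk}:=\int_0^\infty\l\la e_j|E(d\l)|e_k\ra$---but the organization differs. The paper proceeds coefficient-wise: it first bounds the matrix entries to place $T^{1/2}AT^{1/2}$ in $\cL^2(\cH)$, then proves nonnegativity by a direct quadratic-form computation (rewriting $\la x|T^{1/2}AT^{1/2}|x\ra$ as $\int_0^\infty\l\la T^{1/2}x|E(d\l)|T^{1/2}x\ra$), and finally evaluates $\sum_l\la e_l|T^{1/2}AT^{1/2}|e_l\ra$ term by term to get the trace and conclude $\cL^1$.

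Your route through the factorization $T^{1/2}AT^{1/2}=W^*W$ with $W=A^{1/2}T^{1/2}\in\cL^2(\cH)$ is slicker: positivity, self-adjointness, membership in $\cL^1$, and the trace identity $\Tr(W^*W)=\|W\|_{\cL^2}^2=\sum_j\|v_j\|^2$ all drop out of the Schatten ideal structure at once, with no separate quadratic-form computation needed. Your argument also makes the convergence of the defining double series (in $\cL^1$-norm, unconditionally) explicit via $Q_NW^*WQ_N\to W^*W$, which the paper leaves implicit. The paper's version, on the other hand, is more self-contained and does not invoke the closedness of $A^{1/2}$ or the continuity of the product $\cL^2\times\cL^2\to\cL^1$.
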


\begin{proof}
For each Borel $\om\subset\bR$ and each $x,y\in\scrH$, one has
$$
|\la x|E(\om)|y\ra|=|\la E(\om)x|E(\om)y\ra|\le\|E(\om)x\|\|E(\om)y\|=\la x|E(\om)|x\ra^{1/2}\la y|E(\om)|y\ra^{1/2}
$$
since $E(\om)$ is a self-adjoint projection. In particular, for each $\a>0$, one has
$$
2|\la x|E(\om)|y\ra|\le\a\la x|E(\om)|x\ra+\tfrac1\a\la y|E(\om)|y\ra\,.
$$
Hence
$$
a_{jk}:=\int_0^\infty\l\la e_j|E(d\l)|e_k\ra\in\bC
$$
and satisfies 
$$
2|a_{jk}|^2\le\a a_{jj}+\tfrac1\a a_{kk}
$$
for all $\a>0$, so that
$$
|a_{jk}|^2\le a_{jj}a_{kk}
$$
for all $j,k\ge 1$. 

Since $(\tau_ja_{jj})_{j\ge 1}\in\ell^1(\bN^*)$ by \eqref{TrTA} and since 
$$
\la e_j|T^{1/2}AT^{1/2}|e_k\ra=\tau_j^{1/2}\tau_k^{1/2}a_{jk}=\overline{\la e_k|T^{1/2}AT^{1/2}|e_j\ra}\,,
$$
one concludes that $T^{1/2}AT^{1/2}=(T^{1/2}AT^{1/2})^*\in\cL^2(\scrH)$. Moreover, for each $x\in\scrH$
$$
\ba
\la x|T^{1/2}AT^{1/2}|x\ra=&\sum_{j,k\ge 1}\tau_j^{1/2}\tau_k^{1/2}\overline{\la e_j|x\ra}\la e_k|x\ra\int_0^\infty\l\la e_j|E(d\l)|e_k\ra
\\
\ge&\int_0^\infty\l\La\sum_{j\ge 1}\tau_j^{1/2}\la e_j|x\ra e_j|E(d\l)| \sum_{j\ge 1}\tau_j^{1/2}\la e_j|x\ra e_j\Ra
\\
=&\int_0^\infty\l\la T^{1/2}x|E(d\l)| T^{1/2}x \ra\ge 0\,,
\ea
$$
so that $T^{1/2}AT^{1/2}\ge 0$.

Finally
$$
\ba
\sum_{l\ge 1}\la e_l|T^{1/2}AT^{1/2}|e_l\ra=\sum_{l\ge 1}\sum_{j,k\ge 1}\tau_j^{1/2}\tau_k^{1/2}\left(\int_0^\infty\l\la e_j|E(d\l)|e_k\ra\right)\ra e_l|e_j\ra\la e_k|e_l\ra
\\
=\sum_{l\ge 1}\sum_{j,k\ge 1}\tau_j^{1/2}\tau_k^{1/2}\left(\int_0^\infty\l\la e_j|E(d\l)|e_k\ra\right)\de_{lj}\de_{lk}=\sum_{l\ge 1}\tau_l\int_0^\infty\l\la e_l|E(d\l)|e_l\ra&<\infty
\ea
$$
so that
$$
\|T^{1/2}AT^{1/2}\|_1=\Tr_{\cH}(T^{1/2}AT^{1/2})=\sum_{l\ge 1}\tau_l\int_0^\infty\l\la e_l|E(d\l)|e_l\ra<\infty
$$
and in particular $T^{1/2}AT^{1/2}\in\cL^1(\cH)$.
\end{proof}

\begin{Cor}\lb{C-Energ}
Let $T\in\cL(\fH)$ satisfy $T=T^*\ge 0$ and \eqref{TrTA}. Let $\Phi_n:\,\bR_+\to\bR_+$ be a sequence of continuous, bounded and nondecreasing functions such that
$$
0\le\Phi_1(r)\le\Phi_2(r)\le\ldots\le\Phi_n(r)\to r\quad\text{ as }n\to\infty\,.
$$
Set
$$
\Phi_n(A):=\int_0^\infty\Phi_n(\l)E(d\l)\in\cL(\cH)\,.
$$
Then $\Phi_n(A)=\Phi_n(A)^*\ge 0$ for each $n\ge 1$ and, for each $T\in\cL^1(\cH)$ such that $T=T^*\ge 0$, the sequence $T^{1/2}\Phi_n(A)T^{1/2}$ converges weakly to $T^{1/2}AT^{1/2}$ as $n\to\infty$. Moreover
$$
\Tr_\cH(T\Phi_n(A))\to\Tr_\cH(T^{1/2}AT^{1/2})\qquad\text{ as }n\to\infty\,.
$$
\end{Cor}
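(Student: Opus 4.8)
The plan is to realize the statement as an instance of Corollary \ref{C-Monotone} applied to the nondecreasing sequence $T_n:=T^{1/2}\Phi_n(A)T^{1/2}$, followed by an identification of the resulting weak limit with the operator $T^{1/2}AT^{1/2}$ produced by Lemma \ref{L-Energ}.

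The preliminary observations are routine functional calculus. Each $\Phi_n$ being a nonnegative bounded Borel function, $\Phi_n(A)=\int_0^\infty\Phi_n(\l)E(d\l)$ satisfies $\Phi_n(A)=\Phi_n(A)^*\ge 0$ and $\|\Phi_n(A)\|\le\sup_{r\ge 0}\Phi_n(r)<\infty$; hence $T_n=T^{1/2}\Phi_n(A)T^{1/2}$ is self-adjoint, nonnegative and trace class (since $T^{1/2}\in\cL^2(\cH)$ and $\Phi_n(A)\in\cL(\cH)$). From $\Phi_1\le\Phi_2\le\ldots$ the functional calculus gives $\Phi_1(A)\le\Phi_2(A)\le\ldots$, and conjugation by $T^{1/2}$ preserves order, so $0\le T_1\le T_2\le\ldots$. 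Computing the trace in a complete orthonormal system $(e_j)$ of eigenvectors of $T$, with $Te_j=\tau_je_j$, gives
$$
\Tr_\cH(T_n)=\Tr_\cH(T\Phi_n(A))=\sum_{j\ge 1}\tau_j\int_0^\infty\Phi_n(\l)\la e_j|E(d\l)|e_j\ra\le\sum_{j\ge 1}\tau_j\int_0^\infty\l\la e_j|E(d\l)|e_j\ra<\infty,
$$
where we used $\Phi_n(\l)\le\l$ (valid since $\Phi_n(\l)\uparrow\l$) together with the finite energy condition \eqref{TrTA}. Thus $\sup_n\Tr_\cH(T_n)<\infty$, and Corollary \ref{C-Monotone} produces $T_\infty=T_\infty^*\ge 0$ in $\cL^1(\cH)$ with $T_n\to T_\infty$ weakly and $\Tr_\cH(T_\infty)=\lim_n\Tr_\cH(T_n)$.

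It remains to identify $T_\infty$ with $T^{1/2}AT^{1/2}$, which is the only real point. As $T_n\to T_\infty$ weakly, $\la e_l|T_\infty|e_m\ra=\lim_n\la e_l|T_n|e_m\ra$ for all $l,m$; since $(e_j)$ is an orthonormal basis and both $T_\infty$ and $T^{1/2}AT^{1/2}$ are bounded, it suffices to prove that these limits equal $\la e_l|T^{1/2}AT^{1/2}|e_m\ra$, i.e. that
$$
\la e_l|T_n|e_m\ra=\tau_l^{1/2}\tau_m^{1/2}\int_0^\infty\Phi_n(\l)\la e_l|E(d\l)|e_m\ra\ \longrightarrow\ \tau_l^{1/2}\tau_m^{1/2}\int_0^\infty\l\la e_l|E(d\l)|e_m\ra,
$$
the right-hand side being $\la e_l|T^{1/2}AT^{1/2}|e_m\ra$ by the definition of this operator in Lemma \ref{L-Energ}. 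This is trivial when $\tau_l\tau_m=0$. When $\tau_l,\tau_m>0$, \eqref{TrTA} forces $\int_0^\infty\l\la e_l|E(d\l)|e_l\ra<\infty$ and the analogous bound for $e_m$; then the Cauchy--Schwarz estimate for spectral measures used in the proof of Lemma \ref{L-Energ}, applied to the nonnegative function $g_n(\l):=\l-\Phi_n(\l)$, yields
$$
\Big|\int_0^\infty g_n(\l)\la e_l|E(d\l)|e_m\ra\Big|^2\le\Big(\int_0^\infty g_n(\l)\la e_l|E(d\l)|e_l\ra\Big)\Big(\int_0^\infty g_n(\l)\la e_m|E(d\l)|e_m\ra\Big),
$$
and each factor on the right tends to $0$ by dominated convergence, since $0\le g_n(\l)\le\l$ with $\l$ integrable against the positive measure $\la e_l|E(\cdot)|e_l\ra$ and $g_n(\l)\downarrow 0$ pointwise. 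Hence the matrix elements converge, $T_\infty=T^{1/2}AT^{1/2}$, and finally $\Tr_\cH(T\Phi_n(A))=\Tr_\cH(T_n)\to\Tr_\cH(T_\infty)=\Tr_\cH(T^{1/2}AT^{1/2})$.

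The main obstacle is precisely this identification of the weak limit: the diagonal matrix elements converge by ordinary monotone convergence of positive measures, but the off-diagonal ones require the Cauchy--Schwarz domination of the complex spectral measures $\la e_l|E(\cdot)|e_m\ra$ by the diagonal ones, combined with the finite-energy hypothesis to supply an integrable majorant. Everything else is bookkeeping with the functional calculus, cyclicity of the trace, and Corollary \ref{C-Monotone}.
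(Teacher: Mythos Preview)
Your proof is correct and follows the same overall strategy as the paper: set $R_n=T^{1/2}\Phi_n(A)T^{1/2}$, check monotonicity and the uniform trace bound via \eqref{TrTA}, invoke Corollary~\ref{C-Monotone}, and then identify the limit with $T^{1/2}AT^{1/2}$.

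The only real difference lies in the identification step. You compute matrix elements $\la e_l|R_n|e_m\ra$ and pass to the limit one pair $(l,m)$ at a time, using the Cauchy--Schwarz bound on the complex spectral measures $\la e_l|E(\cdot)|e_m\ra$ to control the off-diagonal terms. The paper instead observes directly that $T^{1/2}AT^{1/2}-R_n\ge 0$ (same computation as positivity in Lemma~\ref{L-Energ}, with $\l$ replaced by $\l-\Phi_n(\l)\ge 0$), so that
$$
\|T^{1/2}AT^{1/2}-R_n\|_1=\Tr_\cH(T^{1/2}AT^{1/2}-R_n)=\sum_{j\ge 1}\tau_j\int_0^\infty(\l-\Phi_n(\l))\la e_j|E(d\l)|e_j\ra\to 0
$$
by monotone convergence on the diagonal alone. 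This buys trace-norm convergence (hence weak convergence and trace convergence simultaneously) without ever touching off-diagonal matrix elements. Your route is perfectly valid but slightly less economical; the paper's argument avoids the Cauchy--Schwarz step entirely by exploiting that a nonnegative trace-class operator with vanishing trace must vanish.
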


\begin{proof}
Since $E$ is a resolution of the identity on $[0,+\infty)$, and since $\Phi_n$ is continuous, bounded and with values in $[0,+\infty)$, the operators $\Phi_n(A)$ satisfy
$$
0\le\Phi_n(A)=\Phi_n(A)^*\le\left(\sup_{z\ge 0}\Phi_n(z)\right)I_\cH
$$
and
$$
0\le\Phi_1(A)\le\Phi_2(A)\le\ldots\le\Phi_n(A)\le\ldots
$$
Set $R_n:=T^{1/2}\Phi_n(A)T^{1/2}$; by definition $0\le R_n=R_n^*\in\cL(\fH)$ and one has
$$
0\le R_1\le R_2\le\ldots\le R_n\le\ldots
$$
together with
$$
\Tr_\cH(R_n)=\sum_{j\ge 1}\tau_j\int_0^\infty\Phi_n(\l)\la e_j|E(d\l)|e_j\ra\le\sum_{j\ge 1}\tau_j\int_0^\infty\l\la e_j|E(d\l)|e_j\ra<\infty
$$
by \eqref{TrTA}. Applying Corollary \ref{C-Monotone} shows that $R_n$ converges weakly to some $R\in\cL^1(\cH)$ such that $R=R^*\ge 0$. Finally
$$
T^{1/2}AT^{1/2}-R_n=\sum_{j,k\ge 1}\tau_j^{1/2}\tau_k^{1/2}\left(\int_0^\infty(\l-\Phi_n(\l))\la e_j|E(d\l)|e_k\ra\right)|e_j\ra\la e_k|
$$
so that
$$
\ba
\la x|T^{1/2}AT^{1/2}-R_n|x\ra=&\int_0^\infty(\l-\Phi_n(\l))\La\sum_{j\ge 1}\tau_j^{1/2}\la e_j|x\ra e_j|E(d\l)|\sum_{k\ge 1}\tau_k^{1/2}\la e_k|x\ra e_k\Ra
\\
=&\int_0^\infty(\l-\Phi_n(\l))\la T^{1/2}x|E(d\l)|T^{1/2}x\ra\ge 0\,.
\ea
$$
Hence
$$
0\le T^{1/2}AT^{1/2}-R_n=(T^{1/2}AT^{1/2}-R_n)^*\in\cL^1(\cH)
$$
so that
$$
\ba
\|T^{1/2}AT^{1/2}-R_n\|_1=&\Tr_\cH(T^{1/2}AT^{1/2}-R_n)
\\
=&\sum_{j\ge 1}\tau_j\int_0^\infty(\l-\Phi_n(\l))\la e_j|E(d\l)|e_j\ra\to 0
\ea
$$
as $n\to\infty$ by monotone convergence. Hence $R_n\to T^{1/2}AT^{1/2}$ in $\cL^1(\cH)$ and one has in particular
$$
\Tr_{\cH}(T\Phi_n(A))=\Tr_{\cH}(T^{1/2}\Phi_n(A)T^{1/2})\to\Tr_{\cH}(T^{1/2}AT^{1/2})\,.
$$
\end{proof}

\subsection{Energy and Partial Trace}
%%%%%%%%%%%%%%%%%%%%%%%%%%%%%%%%%%%%%%%%%%%%%%%%%%%%%%%%%%%%%%%%%%%%%%%%%%%%%%%%%%%%%%%%%%%%%%%%%%%%%%%%%%%%%%%%%%%%%%%%

Let $\cH_1$ and $\cH_2$ be two separable Hilbert spaces. Let $A=A^*\ge 0$ be an unbounded self-adjoint operator on $\cH_1$ with domain $\Dom(A)$, and let $E$ be its spectral decomposition. Let $S\in\cL^1(\cH_1)$ satisfy
$S=S^*\ge 0$, and let $(e_j)_{j\ge 1}$ be a complete orthonormal system of $\cH_1$ of eigenvectors of $S$, with eigenvalues $(\si_j)_{j\ge 1}$ such that $Se_j=\si_je_j$ for each $j\ge 1$. Assume that
$$
\sum_{j\ge 1}\si_j\int_0^{+\infty}\l\la e_j|E(d\l)|e_j\ra<\infty\,.
$$

\begin{Lem}
Let $T\in\cL^1(\cH_1\otimes\cH_2)$ satisfy the partial trace condition
$$
\Tr(T|\cH_2)=S\,.
$$
Then $T^{1/2}(A\otimes I_{\cH_2})T^{1/2}\in\cL^1(\cH_1\otimes\cH_2)$ and
$$
\Tr_{\cH_1\otimes\cH_2}(T^{1/2}(A\otimes I)T^{1/2})=\Tr_{\cH_1}(S^{1/2}AS^{1/2})\,.
$$
\end{Lem}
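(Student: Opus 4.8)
The plan is to reduce to the bounded case via the cut-off functions $\Phi_n$ of Corollary~\ref{C-Energ}, and then run that corollary twice: once for the pair $(S,A)$ on $\cH_1$, and once for the pair $(T,A\otimes I_{\cH_2})$ on $\cH_1\otimes\cH_2$, the bridge between the two being the defining property of the partial trace.

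First I would record that, by Lemma~\ref{L-Energ} applied to $S$ on $\cH_1$, the hypothesis $\sum_j\si_j\int_0^{+\infty}\l\la e_j|E(d\l)|e_j\ra<\infty$ guarantees $S^{1/2}AS^{1/2}\in\cL^1(\cH_1)$. Choose a sequence $\Phi_n:\bR_+\to\bR_+$ as in Corollary~\ref{C-Energ} (continuous, bounded, nondecreasing, $\Phi_n(r)\uparrow r$) and set $\Phi_n(A)=\int_0^\infty\Phi_n(\l)E(d\l)\in\cL(\cH_1)$, a bounded nonnegative operator. Since the spectral decomposition of the self-adjoint operator $A\otimes I_{\cH_2}$ is $\om\mapsto E(\om)\otimes I_{\cH_2}$, functional calculus gives $\Phi_n(A\otimes I_{\cH_2})=\Phi_n(A)\otimes I_{\cH_2}$. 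For each fixed $n$, $\Phi_n(A)\otimes I_{\cH_2}$ is bounded and nonnegative, so $T^{1/2}(\Phi_n(A)\otimes I)T^{1/2}$ is the product of the Hilbert--Schmidt operator $T^{1/2}$ with a bounded operator and with $T^{1/2}$ again, hence trace class, and cyclicity of the trace yields
$$
\Tr_{\cH_1\otimes\cH_2}\big(T^{1/2}(\Phi_n(A)\otimes I)T^{1/2}\big)=\Tr_{\cH_1\otimes\cH_2}\big((\Phi_n(A)\otimes I)T\big)=\Tr_{\cH_1}\big(\Phi_n(A)S\big),
$$
the last equality being the defining property of the partial trace $\Tr(T|\cH_2)=S$ tested against the bounded operator $\Phi_n(A)$.

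Next I would verify the finite energy condition \eqref{TrTA} for the pair $(T,A\otimes I)$ on $\cH_1\otimes\cH_2$. Let $(f_l)_{l\ge1}$ be a complete orthonormal system of eigenvectors of $T$ with eigenvalues $(t_l)_{l\ge1}$. By monotone convergence (first in the spectral integral, then in the sum),
$$
\sum_{l\ge1}t_l\int_0^\infty\l\la f_l|(E\otimes I)(d\l)|f_l\ra=\lim_{n\to\infty}\sum_{l\ge1}t_l\int_0^\infty\Phi_n(\l)\la f_l|(E\otimes I)(d\l)|f_l\ra=\lim_{n\to\infty}\Tr_{\cH_1\otimes\cH_2}\big(T(\Phi_n(A)\otimes I)\big),
$$
and by the previous display together with Corollary~\ref{C-Energ} applied to $(S,A)$ on $\cH_1$, this limit equals $\Tr_{\cH_1}(S^{1/2}AS^{1/2})<\infty$. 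Hence \eqref{TrTA} holds for $(T,A\otimes I)$, so Lemma~\ref{L-Energ} gives $T^{1/2}(A\otimes I)T^{1/2}\in\cL^1(\cH_1\otimes\cH_2)$. Applying Corollary~\ref{C-Energ} now to $(T,A\otimes I)$ on $\cH_1\otimes\cH_2$, and using $\Phi_n(A\otimes I)=\Phi_n(A)\otimes I$, we obtain $\Tr_{\cH_1\otimes\cH_2}(T(\Phi_n(A)\otimes I))\to\Tr_{\cH_1\otimes\cH_2}(T^{1/2}(A\otimes I)T^{1/2})$. Since the very same sequence was just shown to converge to $\Tr_{\cH_1}(S^{1/2}AS^{1/2})$, the two limits coincide, which is the claimed identity.

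The step I expect to be the real content is the verification of \eqref{TrTA} for $T$ and $A\otimes I$: everything else is bookkeeping with the two corollaries, but that finite-energy bound is precisely where the partial trace hypothesis is used to transport the ``energy of $T$'' on the tensor product down to the ``energy of $S$'' on $\cH_1$, and it has to go through the bounded approximants $\Phi_n$ because $A$ itself is unbounded and $(A\otimes I)T$ need not be trace class a priori. A minor point to keep an eye on is that the partial-trace identity $\Tr((B\otimes I)T)=\Tr(BS)$ is invoked only for the bounded operators $B=\Phi_n(A)$, so no testing against unbounded operators is ever needed.
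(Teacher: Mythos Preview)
Your proof is correct and follows essentially the same route as the paper: approximate $A$ by the bounded operators $\Phi_n(A)$, use the partial trace identity for the bounded $\Phi_n(A)\otimes I$ together with Corollary~\ref{C-Energ} on $\cH_1$ to control the traces, and then pass to the limit on $\cH_1\otimes\cH_2$. The only difference is organizational: the paper concludes directly via Corollary~\ref{C-Monotone} applied to the increasing sequence $T^{1/2}(\Phi_n(A)\otimes I)T^{1/2}$, whereas you take the extra (and arguably cleaner) step of explicitly verifying the finite-energy condition \eqref{TrTA} for the pair $(T,A\otimes I)$ so that Lemma~\ref{L-Energ} and Corollary~\ref{C-Energ} apply on the tensor product.
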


\begin{proof}
For all $n\ge 1$, set $A_n=\Phi_n(A)\in\cL(\fH)$, with 
$$
\Phi_n(r):=\frac{r}{1+\frac1nr}\,,\qquad\text{ for all }r\ge 0\,.
$$
By construction, one has 
$$
A_n=A_n^*\ge 0\quad\text{ and }\quad A_1\le A_2\le\ldots\le A_n\le\ldots
$$
Hence $T^{1/2}(A_n\otimes I_{\cH_2})T^{1/2}=(T^{1/2}(A_n\otimes I_{\cH_2})T^{1/2})^*\ge 0$ for all $n\ge 1$, and
$$
T^{1/2}(A_1\otimes I_{\cH_2})T^{1/2}\le T^{1/2}(A_2\otimes I_{\cH_2})T^{1/2}\le\ldots\le T^{1/2}(A_n\otimes I_{\cH_2})T^{1/2}\le\ldots
$$
and since 
$$
\Tr_{\cH_1\otimes\cH_2}(T(A_n\otimes I_{\cH_2}))=\Tr_{\cH_1}(SA_n)\to\Tr_{\cH_1}(S^{1/2}AS^{1/2})
$$
as $n\to\infty$ by the partial trace condition and Corollary \ref{C-Energ}, we conclude from Corollary \ref{C-Monotone} that
$$
T^{1/2}(A\otimes I_{\cH_2})T^{1/2}=(T^{1/2}(A\otimes I_{\cH_2})T^{1/2})^*\ge 0
$$
and that
$$
\Tr_{\cH_1\otimes\cH_2}(T^{1/2}(A\otimes I)T^{1/2})=\Tr_{\cH_1}(S^{1/2}AS^{1/2})\,.
$$
\end{proof}

%%%%%%%%%%%%%%%%%%%%%%%%%%%%%%%%%%%%%%%%%%%%%%%%%%%%%%%%%%%%%%%%%%%%%%%%%%%%%%%%%%%%%%%%%%%%%%%%%%%%%%%%%%%%%%%%%%%%%%%%
\section{Couplings}
%%%%%%%%%%%%%%%%%%%%%%%%%%%%%%%%%%%%%%%%%%%%%%%%%%%%%%%%%%%%%%%%%%%%%%%%%%%%%%%%%%%%%%%%%%%%%%%%%%%%%%%%%%%%%%%%%%%%%%%%

Let $\fH:=L^2(\bR^d)$. An operator $R\in\cL(\fH)$ is a density operator if 
$$
R=R^*\ge 0\quad\hbox{ and }\quad\Tr(R)=1\,.
$$
We denote by $\cD(\fH)$ the set of density operators on $\fH$, and define
$$
\cD_2(\fH):=\{R\in\cD(\fH)\hbox{ s.t. }\Tr(R^{1/2}(|y|^2-\Dlt_y)R^{1/2})<\infty\}\,.
$$
The set of Borel probability measures on $\bR^d\times\bR^d$ is denoted by $\cP(\bR^d\times\bR^d)$. We denote by $\cP_2(\bR^d\times\bR^d)$ the set of Borel probability measures $\mu$ on $\bR^d\times\bR^d$ such that
$$
\iint_{\bR^d\times\bR^d}(|x|^2+|\xi|^2)\mu(dxd\xi)<\infty\,.
$$
The set of Borel probability measures on $\bR^d\times\bR^d$ which are absolutely continuous with respect to the Lebesgue measure on $\bR^d\times\bR^d$ is denoted $\cP^{ac}(\bR^d\times\bR^d)\subset\cP(\bR^d\times\bR^d)$.
We set $\cP^{ac}_2(\bR^d\times\bR^d)=\cP^{ac}(\bR^d\times\bR^d)\cap\cP_2(\bR^d\times\bR^d)$, and we identify elements of $\cP^{ac}(\bR^d\times\bR^d)$ with their densities with respect to the Lebesgue measure.

Let $R_1,R_2\in\cD(\fH)$; a coupling of $R_1$ and $R_2$ is an element $\cR\in\cD(\fH\otimes\fH)$ such that
$$
\Tr_{\fH\otimes\fH}((A\otimes I+I\otimes B)\cR)=\Tr_\fH(R_1A)+\Tr_\fH(R_2B)\,.
$$
The set of couplings of $R_1$ and $R_2$ will be denoted by $\cC(R_1,R_2)$. Obviously the tensor product $R_1\otimes R_2\in\cC(R_1,R_2)$, so that $\cC(R_1,R_2)\not=\varnothing$.

Let $f$ be a probability density on $\bR^d\times\bR^d$, and let $R\in\cD(\fH)$. A coupling of $f$ and $R$ is an ultraweakly measurable operator-valued function $(x,\xi)\mapsto Q(x,\xi)$ defined a.e. on $\bR^d\times\bR^d$ with 
values in $\cL(\fH)$ such that
$$
\ba
Q(x,\xi)=Q(x,\xi)^*\ge 0\,,\quad\iint_{\bR^d\times\bR^d}Q(x,\xi)dxd\xi=R
\\
\hbox{ and }\Tr_\fH(Q(x,\xi))=f(x,\xi)\hbox{Ê for a.e. }(x,\xi)\in\bR^d\times\bR^d&\,.
\ea
$$
The set of couplings of $f$ and $R$ will also be denoted by $\cC(f,R)$. Since the map $(x,\xi)\mapsto f(x,\xi)R$ (henceforth denoted $f\otimes R$) obviously belongs to $\cC(f,R)$, one has $\cC(f,R)\not=\varnothing$.

\smallskip
In general, one does not know much about the general structure of couplings between two density operators. However, the case where one of the density operators is a rank $1$ projection is particularly simple.

\begin{Lem}\lb{L-rank1}
Let $P=P^*\in\cL(\fH)$ be a rank $1$ projection. Then

\noindent
(i) for each probability density $f$ on $\bR^d\times\bR^d$, one has $\cC(f,P)=\{f\otimes P\}$;

\noindent
(ii) for each $R\in\cD(\fH)$, one has $\cC(P,R)=\{P\otimes R\}$ and $\cC(R,P)=\{R\otimes P\}$.
\end{Lem}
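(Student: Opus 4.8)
The plan is to establish both parts of Lemma~\ref{L-rank1} by a direct operator-theoretic argument exploiting that a rank one projection $P=|\psi\ra\la\psi|$ (with $\|\psi\|=1$) is an extreme point of the set of density operators, so any "marginal being $P$" forces a tensor structure. The key elementary fact I would prove first is the following: if $\cR\in\cL^1(\fH\otimes\fH)$ satisfies $\cR=\cR^*\ge 0$ and the partial trace $\Tr_{\fH\otimes\fH}((A\otimes I)\cR)=\Tr_\fH(AP)=\la\psi|A|\psi\ra$ for all bounded $A$, i.e. $\Tr(\cR|\fH_2)=P$, then $\cR=P\otimes\tilde R$ for some $\tilde R\in\cD(\fH)$. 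To see this, write $P^\perp=I-P$ and note $\Tr_{\fH\otimes\fH}((P^\perp\otimes I)\cR)=\Tr_\fH(P^\perp P)=0$; since $(P^\perp\otimes I)^{1/2}\cR(P^\perp\otimes I)^{1/2}=(P^\perp\otimes I)\cR(P^\perp\otimes I)\ge 0$ has zero trace, it vanishes, hence $(P^\perp\otimes I)\cR=0=\cR(P\otimes I)^\perp$, i.e. $\cR=(P\otimes I)\cR(P\otimes I)$. Identifying $\Img(P)\otimes\fH\simeq\fH$ via $\psi\otimes\phi\mapsto\phi$, this says $\cR=|\psi\ra\la\psi|\otimes\tilde R$ where $\tilde R\in\cL^1(\fH)$ is the corresponding operator; positivity and trace one of $\cR$ give $\tilde R\in\cD(\fH)$, and the remaining marginal condition $\Tr_\fH(R B)=\Tr_{\fH\otimes\fH}((I\otimes B)\cR)=\Tr_\fH(\tilde R B)$ forces $\tilde R=R$. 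This proves the identity $\cC(P,R)=\{P\otimes R\}$, and by the obvious symmetry (swapping the two factors) also $\cC(R,P)=\{R\otimes P\}$, giving~(ii).

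For~(i), let $Q(x,\xi)$ be a coupling of the probability density $f$ with the rank one projection $P=|\psi\ra\la\psi|$. The marginal condition $\iint Q(x,\xi)\,dxd\xi=P$ together with $Q(x,\xi)=Q(x,\xi)^*\ge 0$ is the continuous analogue of the extreme point argument above: for each test set we have $\iint \la\psi^\perp|Q(x,\xi)|\psi^\perp\ra\,dxd\xi=\la\psi^\perp|P|\psi^\perp\ra=0$ for every $\psi^\perp\perp\psi$, and since the integrand is nonnegative and measurable it vanishes for a.e.\ $(x,\xi)$; a separability argument (pick a countable dense family of such $\psi^\perp$, or equivalently use a fixed orthonormal basis of $\psi^\perp$ and take a countable intersection of null sets) shows that for a.e.\ $(x,\xi)$ one has $P^\perp Q(x,\xi)=Q(x,\xi)P^\perp=0$, i.e.\ $Q(x,\xi)=\la\psi|Q(x,\xi)|\psi\ra\,|\psi\ra\la\psi| = \Tr_\fH(Q(x,\xi))\,P = f(x,\xi)P$, using the other marginal condition $\Tr_\fH(Q(x,\xi))=f(x,\xi)$. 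Hence $Q=f\otimes P$, and since $f\otimes P$ is clearly a coupling, $\cC(f,P)=\{f\otimes P\}$.

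The main obstacle, such as it is, is purely a measurability/separability point in part~(i): one must pass from "for each fixed $\psi^\perp$, $\la\psi^\perp|Q(x,\xi)|\psi^\perp\ra=0$ a.e." (uncountably many null sets) to "a.e.\ $(x,\xi)$, $Q(x,\xi)$ is supported on $\Img(P)$." This is handled by fixing a complete orthonormal system $(e_k)_{k\ge 1}$ of $\Img(P)^\perp$, discarding the countable union of null sets on which some $\la e_k|Q(x,\xi)|e_k\ra\ne 0$ fails, and then using positivity of $Q(x,\xi)$ (Cauchy--Schwarz on the sesquilinear form $\la\cdot|Q(x,\xi)|\cdot\ra$, exactly as in the proof of Lemma~\ref{L-Energ}) to conclude $\la e_j|Q(x,\xi)|e_k\ra=0$ for all $j,k$ and $\la e_k|Q(x,\xi)|\psi\ra=0$ for all $k$, whence $P^\perp Q(x,\xi)=0$. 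Everything else is bookkeeping with partial traces and the defining marginal conditions.
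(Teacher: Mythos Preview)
Your proof is correct and follows essentially the same approach as the paper: use the marginal condition with $P^\perp=I-P$ to show that the ``diagonal block'' $(P^\perp\otimes I)\cR(P^\perp\otimes I)$ (resp.\ $(I-P)Q(x,\xi)(I-P)$) vanishes, then invoke positivity/Cauchy--Schwarz to kill the cross terms, and finally read off the tensor structure from what remains. One minor point of comparison: for part~(i) the paper works directly with the nonnegative scalar integrand $\Tr_\fH((I-P)Q(x,\xi)(I-P))$, whose integral vanishes, so the integrand is zero a.e.\ in one stroke; this sidesteps the vector-by-vector separability argument you use (fixing a countable orthonormal system of $\psi^\perp$ and intersecting null sets), though your argument is of course also valid.
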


\smallskip
This is in complete analogy with the following elementary observation: if $\mu\in\cP(\bR^d)$ and $y_0\in\bR^d$, the only coupling of $\mu$ and $\de_{y_0}$ is $\mu\otimes\de_{y_0}$. In other words, self-adjoint rank-$1$ projections 
are the quantum analogue of points in this picture. 

\begin{proof}
Let $Q\in\cC(f,P)$; one has
$$
\ba
\iint_{\bR^d\times\bR^d}\Tr_\fH((I-P)Q(x,\xi)(I-P))dxd\xi
\\
=\Tr_\fH\left((I-P)\iint_{\bR^d\times\bR^d}Q(x,\xi)dxd\xi(I-P\right)
\\
=\Tr_\fH((I-P)P(I-P))=0&\,.
\ea
$$
Since $(I-P)Q(x,\xi)(I-P)\ge 0$ for a.e. $(x,\xi)\in\bR^d\times\bR^d$, this implies that
$$
(I-P)Q(x,\xi)(I-P)=0\quad\hbox{ for a.e. }(x,\xi)\in\bR^d\times\bR^d\,.
$$
Since $Q(x,\xi)=Q(x,\xi)^*\ge 0$ for a.e. $(x,\xi)\in\bR^d\times\bR^d$, we deduce from the Cauchy-Schwarz inequality that, for all $\phi,\psi\in\fH$
$$
\ba
|\la P\phi|Q(x,\xi)|(I-P)\psi\ra|^2=|\la(I-P)\psi|Q(x,\xi)|P\phi\ra|^2
\\
\le\la P\phi|Q(x,\xi)|P\phi\ra^{1/2}\la(I-P)\psi|Q(x,\xi)|(I-P)\psi\ra^{1/2}=0&\,.
\ea
$$
Hence $(I-P)Q(x,\xi)P=PQ(x,\xi)(I-P)=0$ for a.e. $(x,\xi)\in\bR^d\times\bR^d$, so that
$$
Q(x,\xi)=PQ(x,\xi)P\quad\hbox{ for a.e. }(x,\xi)\in\bR^d\times\bR^d\,.
$$
Writing $P$ as $P=|u\ra\la u|$ where $u\in\fH$ is a unit vector, we conclude that
$$
Q(x,\xi)=\la u|Q(x,\xi)|u\ra P\quad\hbox{ for a.e. }(x,\xi)\in\bR^d\times\bR^d\,.
$$
Finally
$$
\Tr(Q(x,\xi))=f(x,\xi)=\la u|Q(x,\xi)|u\ra\quad\hbox{ for a.e. }(x,\xi)\in\bR^d\times\bR^d\,.
$$
This concludes the proof of (i).

As for (ii), let $\cQ\in\cC(R,P)$. Then
$$
\Tr_{\fH\otimes\fH}((I\otimes(I-P))\cQ(I\otimes(I-P)))=\Tr_\fH(I-P)P(I-P))=0\,.
$$
Hence
$$
(I\otimes(I-P))\cQ(I\otimes(I-P)=0\,.
$$
Since $\cQ=\cQ^*\ge 0$, the Cauchy-Schwarz inequality implies that, for all $\phi,\phi',\psi,\psi'\in\fH$
$$
\ba
|\la\phi\otimes\psi|(I\otimes P)\cQ(I\otimes(I-P))|\phi'\otimes\psi'\ra|=|\la\phi'\otimes\psi|'(I\otimes(I-P))\cQ(I\otimes P)|\phi\otimes\psi\ra|
\\
\le\la\phi\otimes\psi|(I\otimes P)\cQ(I\otimes P)|\phi\otimes\psi\ra^{1/2}\la\phi'\otimes\psi'|(I\otimes(I-P))\cQ(I\otimes(I-P))|\phi'\otimes\psi'\ra^{1/2}
\ea
$$
so that
$$
(I\otimes P)\cQ(I\otimes(I-P))=(I\otimes(I-P))\cQ(I\otimes P)=0\,.
$$
Hence
$$
\cQ=(I\otimes P)\cQ(I\otimes P)\,.
$$
Writing $P=|u\ra\la u|$ with $u\in\fH$ and $|u|=1$ as above, we conclude that 
$$
\la\phi\otimes\psi|\cQ|\phi'\otimes\psi'\ra=\la\phi\otimes u|\cQ|\phi'\otimes u\ra\la u|\psi\ra\la u|\psi'\ra\,.
$$
This shows that $\cQ=L\otimes|u\ra\la u|=L\otimes P$, where $L=L^*$ is the element of $\cL(\fH)$ such that
$$
\la\phi|L|\phi'\ra=\la\phi\otimes u|\cQ|\phi'\otimes u\ra
$$
for each $\phi,\phi'\in\fH$. (Observe indeed that $(\phi,\phi')\mapsto\la\phi\otimes u|\cQ|\phi'\otimes u\ra$ is a continuous, symmetric bilinear functional on $\fH$, and is therefore represented by a unique self-adjoint element of $\cL(\fH)$.)
We conclude by observing that
$$
\Tr_{\fH\otimes\fH}((A\otimes I)\cQ)=\Tr_\fH(AR)=\Tr_\fH(LR)
$$
for each finite rank operator $A\in\cL(\fH)$, and this implies that $\cQ=R\otimes P$. 

The case of $\cQ'\in\cC(P,R)$ is handled similarly.
\end{proof}

\smallskip
Next we explain how to ``disintegrate'' a coupling with respect to one of its marginals when this marginal is a probability density.

\begin{Lem}\lb{L-Disint}
Let $f\in\cP^{ac}(\bR^d\times\bR^d)$, let $R\in\cD(\fH)$ and let $Q\in\cC(f,R)$. There exists a $\si(\cL^1(\fH),\cL(\fH))$ weakly measurable function $(x,\xi)\mapsto Q_f(x,\xi)$ defined a.e. on $\bR^d\times\bR^d$ with values in $\cL^1(\fH)$ 
such that
$$
Q_f(x,\xi)=Q^*_f(x,\xi)\ge 0\,,\quad\Tr(Q_f(x,\xi))=1\,,\quad\hbox{ and }Q(x,\xi)=f(x,\xi)Q_f(x,\xi)
$$
for a.e. $(x,\xi)\in\bR^d\times\bR^d$.
\end{Lem}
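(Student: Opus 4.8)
The plan is to produce the disintegration $Q_f(x,\xi)$ by a Radon--Nikodym type argument applied to a countable family of complex measures built from $Q$ by pairing against a fixed countable set of test operators. First I would fix a countable family $(A_k)_{k\ge1}$ in $\cL(\fH)$ that is total for the predual pairing $\si(\cL^1(\fH),\cL(\fH))$ --- for instance the rank-one operators $|\phi_m\ra\la\phi_n|$ for $(\phi_n)_{n\ge1}$ a fixed orthonormal basis of $\fH$, together with their real and imaginary parts, so that knowing $\Tr(A_k T)$ for all $k$ determines $T\in\cL^1(\fH)$. For each $k$, the map $\om\mapsto\int_{\om}\Tr_\fH(A_kQ(x,\xi))\,dxd\xi$ defines a finite complex Borel measure on $\bR^d\times\bR^d$, absolutely continuous with respect to Lebesgue measure because $|\Tr(A_kQ(x,\xi))|\le\|A_k\|\,\Tr(Q(x,\xi))=\|A_k\|f(x,\xi)$ (here I use that $Q(x,\xi)\ge0$ so $\|Q(x,\xi)\|_1=\Tr Q(x,\xi)=f(x,\xi)$). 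By the classical Radon--Nikodym theorem there is $g_k\in L^1(\bR^d\times\bR^d)$ with $\Tr(A_kQ(x,\xi))=g_k(x,\xi)$ a.e., and the bound above gives $|g_k(x,\xi)|\le\|A_k\|f(x,\xi)$ a.e.

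Next I would define, on the set $\{f>0\}$ (whose complement carries no mass of $Q$ since $\iint Q = R$ is trace one and $\Tr Q(x,\xi)=f(x,\xi)$, so $Q(x,\xi)=0$ a.e. on $\{f=0\}$), the candidate quotients $q_k(x,\xi):=g_k(x,\xi)/f(x,\xi)$, which satisfy $|q_k|\le\|A_k\|$ a.e. For a.e. fixed $(x,\xi)$ the family of numbers $(q_k(x,\xi))_k$ should be the values $\Tr(A_k Q_f(x,\xi))$ of a density operator $Q_f(x,\xi)$. To build that operator I would use the structure of the $(A_k)$: the numbers $q_{mn}(x,\xi)$ corresponding to $A=|\phi_m\ra\la\phi_n|$ form, for a.e. $(x,\xi)$, the matrix entries $\la\phi_n|Q_f(x,\xi)|\phi_m\ra$ of a self-adjoint, nonnegative, trace-one operator. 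Self-adjointness ($q_{mn}=\overline{q_{nm}}$), nonnegativity ($\sum_{m,n}\overline{c_m}c_n q_{nm}\ge0$ for all finitely supported $(c_m)$), and the trace condition ($\sum_m q_{mm}=1$) each hold for a.e. $(x,\xi)$ because they hold after integration against every Borel set --- indeed $\iint_\om(\cdots)\,dxd\xi$ of each such expression equals the corresponding expression for $Q(x,\xi)$ divided through by $f$, which has the required sign/value; intersecting the countably many full-measure sets (one per rational combination, one per basis index) yields a single full-measure set on which all the conditions hold simultaneously. On that set the matrix $(q_{nm}(x,\xi))$ defines a unique $Q_f(x,\xi)\in\cL^1(\fH)$ with $Q_f=Q_f^*\ge0$ and $\Tr Q_f(x,\xi)=1$; set $Q_f(x,\xi)$ to be a fixed reference density operator on the remaining null set and on $\{f=0\}$.

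Finally I would check the two loose ends. Measurability: $(x,\xi)\mapsto\la\phi_n|Q_f(x,\xi)|\phi_m\ra=q_{mn}(x,\xi)$ is measurable for each $m,n$, and since $(|\phi_m\ra\la\phi_n|)$ is total in $\cL(\fH)$ for the $\si(\cL^1(\fH),\cL(\fH))$ topology this gives $\si(\cL^1(\fH),\cL(\fH))$ weak measurability of $Q_f$. The identity $Q(x,\xi)=f(x,\xi)Q_f(x,\xi)$: both sides have the same pairing $\Tr(A_k\,\cdot\,)$ against every $A_k$ for a.e. $(x,\xi)$ by construction, and totality of $(A_k)$ in the predual forces equality in $\cL^1(\fH)$ a.e. The main obstacle I anticipate is bookkeeping the null sets: one must make sure that the countably many a.e. statements (one Radon--Nikodym identity per $k$, plus the sign/trace conditions tested against a countable dense set of coefficient vectors) can be intersected into one common full-measure set before the pointwise operator $Q_f(x,\xi)$ is defined --- this is routine but is the only place where care is genuinely needed. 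A secondary technical point is justifying that ``$\iint_\om\Tr(A_kQ)\,dxd\xi$'' makes sense and equals $\Tr(A_k\iint_\om Q)$, which follows from the ultraweak measurability of $Q$ assumed in the definition of $\cC(f,R)$ together with the dominated convergence / Fubini argument for trace-class-valued integrals.
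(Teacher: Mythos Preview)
Your argument is correct, but it takes a considerably longer route than necessary. The paper's proof is a direct pointwise construction: since $Q\in\cC(f,R)$ is \emph{already} given as an ultraweakly measurable function $(x,\xi)\mapsto Q(x,\xi)$ with values in the nonnegative trace-class operators, one may simply set
\[
Q_f(x,\xi):=\frac{Q(x,\xi)+\indc_{\cN}(x,\xi)\,|u\ra\la u|}{f_1(x,\xi)+\indc_{\cN}(x,\xi)},
\]
where $f_1$ is a Borel representative of $f$, $\cN=\{f_1=0\}$, and $u\in\fH$ is any unit vector. This is just the scalar division $Q(x,\xi)/f(x,\xi)$ on $\{f>0\}$, patched by a fixed density on $\{f=0\}$. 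Measurability of $Q_f$ follows because $(T,\l)\mapsto \l^{-1}T$ is continuous on $\cL(\fH)\times(0,\infty)$, and the three required properties ($Q_f=Q_f^*\ge0$, $\Tr Q_f=1$, $fQ_f=Q$) are immediate.

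Your Radon--Nikodym step is superfluous: the measure $\om\mapsto\int_\om\Tr(A_kQ)\,dxd\xi$ already has the explicit density $g_k(x,\xi)=\Tr(A_kQ(x,\xi))$, measurable by the very definition of ultraweak measurability, so the Radon--Nikodym theorem is invoked only to recover a function you already possess. Likewise, your reconstruction of $Q_f(x,\xi)$ from its matrix entries $(q_{nm})$ rebuilds, entry by entry and through a countable intersection of null sets, the operator $Q(x,\xi)/f(x,\xi)$ that is available from the outset. Your approach would be the right one if $Q$ were given only as a vector measure $\om\mapsto\int_\om Q$ with values in $\cL^1(\fH)$ (then a genuine vector-valued Radon--Nikodym argument would be needed), but the definition of $\cC(f,R)$ in this paper hands you the pointwise object directly.
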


\begin{proof}
Let $f_1$ be a Borel measurable function defined on $\bR^d\times\bR^d$ and such that $f(x,\xi)=f_1(x,\xi)$ for a.e. $(x,\xi)\in\bR^d\times\bR^d$. Let $\cN$ be the Borel measurable set defined as follows: 
$\cN:=\{(x,\xi)\in\bR^d\times\bR^d\hbox{ s.t. }f(x,\xi)=0\}$, and let $u\in\fH$ satisfy $|u|=1$.
Consider the function
$$
(x,\xi)\mapsto Q_f(x,\xi):=\frac{Q(x,\xi)+\indc_\cN(x,\xi)|u\ra\la u|}{f_1(x,\xi)+\indc_\cN(x,\xi)}\in\cL(\fH)
$$
defined a.e. on $\bR^d\times\bR^d$. The function $f_1+\indc_{\cN}>0$ is Borel measurable on $\bR^d\times\bR^d$ while $(x,\xi)\mapsto\la\phi|Q(x,\xi)|\psi\ra$ is measurable and defined a.e. on $\bR^d\times\bR^d$ for each $\phi,\psi\in\fH$.
Set $\cA:\,\cL(\fH)\times(0,+\infty)\ni(T,\l)\mapsto\l^{-1}T\in\cL(\fH)$; since $\cA$ is continuous, the function $Q_f:=\cA(Q+\indc_\cN\otimes|u\ra\la u|,f_1+\indc_\cN)$ is weakly measurable on $\bR^d\times\bR^d$. Since $f_1+\indc_\cN>0$, 
and since $Q(x,\xi)=Q^*(x,\xi)\ge 0$, one has $(Q(x,\xi)+\indc_\cN\otimes|u\ra\la u|)^*=Q(x,\xi)+\indc_\cN\otimes|u\ra\la u|\ge 0$ for a.e. $(x,\xi)\in\bR^d\times\bR^d$. On the other hand, for a.e. $(x,\xi)\in\bR^d\times\bR^d$, one has
$\Tr(Q(x,\xi)+\indc_\cN\otimes|u\ra\la u|)=f(x,\xi)+\indc_\cN(x,\xi)$, so that $\Tr(Q_f(x,\xi))=1$. Finally
$$
f(x,\xi)Q_f(x,\xi)=\frac{f(x,\xi)Q(x,\xi)}{f_1(x,\xi)+\indc_\cN(x,\xi)}=Q(x,\xi)\quad\hbox{ for a.e. }(x,\xi)\in\bR^d\times\bR^d\,,
$$
since $f=f_1$ a.e. on $\bR^d\times\bR^d$ and $\indc_\cN(x,\xi)=0$ for a.e. $(x,\xi)\in\bR^d\times\bR^d$ such that $f(x,\xi)>0$. Since $Q_f$ satisfies $\Tr(Q_f(x,\xi))=1$ for a.e. $(x,\xi)\in\bR^d\times\bR^d$ and is weakly measurable on 
$\bR^d\times\bR^d$, it is $\si(\cL^1(\fH),\cL(\fH))$ weakly measurable.
\end{proof}

%%%%%%%%%%%%%%%%%%%%%%%%%%%%%%%%%%%%%%%%%%%%%%%%%%%%%%%%%%%%%%%%%%%%%%%%%%%%%%%%%%%%%%%%%%%%%%%%%%%%%%%%%%%%%%%%%%%%%%%
\section{Triangle Inequalities}
%%%%%%%%%%%%%%%%%%%%%%%%%%%%%%%%%%%%%%%%%%%%%%%%%%%%%%%%%%%%%%%%%%%%%%%%%%%%%%%%%%%%%%%%%%%%%%%%%%%%%%%%%%%%%%%%%%%%%%%

The following ``pseudo metrics'' have been defined in \cite{FGMouPaul} and in \cite{FGPaul} respectively.

\begin{Def}
For all $R,S\in\cD_2(\fH)$ and all $f\in\cP_2^{ac}(\bR^d\times\bR^d)$, we set
$$
MK_\hb(R,S):=\inf_{A\in\cC(R,S)}\Tr_{\fH\otimes\fH}(A^{1/2}CA^{1/2})^{1/2}
$$
where
$$
C:=C(x,y,\hb D_x,\hb D_y)=|x-y|^2+|\hb D_x-\hb D_y|^2\,.
$$
Similarly, we set
$$
\cE_\hb(f,R):=\inf_{a\in\cC(f,R)}\left(\iint_{\bR^d\times\bR^d}\Tr_\fH(a(x,\xi)^{1/2}c(x,\xi)a(x,\xi)^{1/2})dxd\xi\right)^{1/2}
$$
where
$$
c(x,\xi):=c(x,\xi,y,\hb D_y)=|x-y|^2+|\xi-\hb D_y|^2\,.
$$
\end{Def}

\smallskip
The above ``pseudometrics'' satisfy the following inequalities.

\begin{Thm}\lb{T-IneqT} Let $f,g\in\cP_2^{ac}(\bR^d\times\bR^d)$, and let $R_1,R_2,R_3\in\cD_2(\fH)$. The following inequalities hold true:

\noindent
(a) $\cE_\hb(f,R_1)\le\MKd(f,g)+\cE_\hb(g,R_1)$;

\noindent
(b) $MK_\hb(R_1,R_3)\le\cE_\hb(f,R_1)+\cE_\hb(f,R_3)$;

\noindent
(c) if $\text{rank}(R_2)=1$, then
$$
MK_\hb(R_1,R_3)\le MK_\hb(R_1,R_2)+MK_\hb(R_2,R_3)\,,
$$
(d) if $\text{rank}(R_2)=1$, then
$$
\MKd(f,g)\le\cE_\hb(f,R_2)+\cE_\hb(g,R_2)\,,
$$ 
(e) if $\text{rank}(R_2)=1$, then
$$
\cE_\hb(f,R_3)\le\cE_\hb(f,R_2)+MK_\hb(R_2,R_3)\,.
$$
\end{Thm}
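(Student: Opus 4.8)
The plan is to prove all five inequalities by the two-step recipe underlying the triangle inequality for the Wasserstein distance. \emph{Step 1 (gluing):} given couplings realizing, up to an arbitrarily small error, the two terms on the right-hand side and sharing the ``middle'' argument, I would build a single object on a triple space whose appropriate marginal is a coupling of the two outer arguments. \emph{Step 2 (Minkowski):} on that object I would write the relevant difference (for instance $x-y$, or $x-y$ relative to a phase-space point) as a sum of two differences passing through the middle argument, and bound the quadratic cost by Minkowski's inequality, controlling the cross term by Cauchy--Schwarz. Two elementary facts make Step 2 work: (i) by Corollary~\ref{C-Energ} the map $T\mapsto\Tr(T^{1/2}AT^{1/2})\in[0,+\infty]$ equals $\sup_n\Tr(T\,\Phi_n(A))$, hence is additive and monotone in $T\ge0$ and, by monotone convergence, commutes with the integrals defining the glued couplings and (via the partial-trace lemma of the preliminary section) with partial traces; (ii) for a self-adjoint cost $\sum_k(X_k+Y_k)^2$ with $[X_k,Y_k]=0$ for each $k$, one has $\Tr\!\big(T^{1/2}\textstyle\sum_k(X_k+Y_k)^2T^{1/2}\big)^{1/2}\le\Tr\!\big(T^{1/2}\textstyle\sum_kX_k^2T^{1/2}\big)^{1/2}+\Tr\!\big(T^{1/2}\textstyle\sum_kY_k^2T^{1/2}\big)^{1/2}$, because the cross term $2\sum_k\Tr(T^{1/2}X_kY_kT^{1/2})$ is dominated by twice the geometric mean of the two square terms by Cauchy--Schwarz for the Hermitian form $(M,N)\mapsto\sum_k\Tr(T^{1/2}M_k^*N_kT^{1/2})$.

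For (a) and (b) the middle argument is a density, so I would glue by disintegration (Lemma~\ref{L-Disint}). For (a): take $Q\in\cC(g,R_1)$, write $Q(x,\xi)=g(x,\xi)Q_g(x,\xi)$ with $Q_g$ density-operator valued, take an optimal classical coupling $\pi$ of $f$ and $g$ disintegrated with respect to its first marginal, $\pi(dz,dz')=f(z)\,dz\,\pi_z(dz')$, and set $\tilde Q(z):=f(z)\int Q_g(z')\,\pi_z(dz')$. Then $\Tr\tilde Q(z)=f(z)$ and $\iint\tilde Q(z)\,dz=\int Q_g(z')g(z')\,dz'=R_1$, so $\tilde Q\in\cC(f,R_1)$; by additivity of the energy functional $\cE_\hb(f,R_1)^2\le\iint\Tr\big(Q_g(z')^{1/2}c(z)Q_g(z')^{1/2}\big)\,\pi(dz,dz')$, and Step 2 with $x-y=(x-x')+(x'-y)$, $\xi-\hb D_y=(\xi-\xi')+(\xi'-\hb D_y)$ (where $z=(x,\xi)$, $z'=(x',\xi')$ and $y$ is the $\fH$-variable) bounds it by $\big(\MKd(f,g)+(\iint\Tr(Q_g(z')^{1/2}c(z')Q_g(z')^{1/2})g(z')\,dz')^{1/2}\big)^2$; taking the infimum over $Q$ gives (a). For (b): write $Q_i(x,\xi)=f(x,\xi)\rho_i(x,\xi)$ for $Q_i\in\cC(f,R_i)$ and set $\cR:=\iint f(x,\xi)\,\rho_1(x,\xi)\otimes\rho_3(x,\xi)\,dxd\xi$; its partial traces are $R_1$ and $R_3$, so $\cR\in\cC(R_1,R_3)$, and Step 2 with $u-v=(u-x)+(x-v)$, $\hb D_u-\hb D_v=(\hb D_u-\xi)+(\xi-\hb D_v)$ (writing $(u,v)$ for the two $\fH$-variables and $(x,\xi)$ for the integrated phase-space point) splits $\Tr(\cR^{1/2}C\cR^{1/2})$ into $\iint\Tr(Q_1(x,\xi)^{1/2}c(x,\xi)Q_1(x,\xi)^{1/2})\,dxd\xi\ge\cE_\hb(f,R_1)^2$ (after renaming $u\to y$) and the analogue for $R_3$, the inert tensor factor being removed by the partial-trace lemma.

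For (c), (d), (e) the middle argument $R_2$ is a rank-one projection $P=|u\ra\la u|$, so Lemma~\ref{L-rank1} makes all the relevant couplings explicit and no disintegration is needed. For (c), glue into $R_1\otimes P\otimes R_3$ on $\fH^{\otimes3}$, whose $(1,2)$-, $(2,3)$- and $(1,3)$-marginals are the only couplings $R_1\otimes P\in\cC(R_1,R_2)$, $P\otimes R_3\in\cC(R_2,R_3)$ and $R_1\otimes R_3\in\cC(R_1,R_3)$; Step 2 through the middle slot, combined with the partial-trace lemma to discard the inert third factor, gives (c) at once. For (e), glue $f\otimes R_3\in\cC(f,R_3)$ against the phase-space point $(q_0,p_0):=(\la u|y|u\ra,\la u|\hb D_y|u\ra)$; Step 2 with $x-y=(x-q_0)+(q_0-y)$, $\xi-\hb D_y=(\xi-p_0)+(p_0-\hb D_y)$ bounds $\cE_\hb(f,R_3)$ by $\|(x-q_0,\xi-p_0)\|_{L^2(f)}+\Tr\big(R_3^{1/2}(|y-q_0|^2+|\hb D_y-p_0|^2)R_3^{1/2}\big)^{1/2}$; the first term is $\le\cE_\hb(f,R_2)$ because $\cC(f,P)=\{f\otimes P\}$ yields $\cE_\hb(f,R_2)^2=\iint f\,\la u||x-y|^2+|\xi-\hb D_y|^2|u\ra\ge\iint f(|x-q_0|^2+|\xi-p_0|^2)$ by the variance inequality $\la u|(x_i-y_i)^2|u\ra\ge(x_i-q_{0,i})^2$, and the second term is $\le MK_\hb(R_2,R_3)$ because $\cC(P,R_3)=\{P\otimes R_3\}$ makes $MK_\hb(R_2,R_3)^2$ equal to that quantity plus the nonnegative variances $\la u||y|^2|u\ra-|q_0|^2$ and $\la u||\hb D_y|^2|u\ra-|p_0|^2$. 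Finally (d) is the purely classical degenerate case: the same variance inequality gives $\cE_\hb(f,R_2)\ge\MKd(f,\de_{(q_0,p_0)})$ and $\cE_\hb(g,R_2)\ge\MKd(\de_{(q_0,p_0)},g)$, and the classical triangle inequality through $\de_{(q_0,p_0)}$ concludes.

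The verifications of the marginal identities and of the two square terms are routine bookkeeping. The one genuine difficulty will be that the costs $c(x,\xi)$ and $C$ are unbounded, so every manipulation of $\Tr(T^{1/2}AT^{1/2})$ — its additivity across the integral defining $\tilde Q$ or $\cR$, its behaviour under partial traces, and above all the Cauchy--Schwarz bound on the cross term in Step 2 — must first be carried out for bounded truncations $\Phi_n(A)$ of the cost and then passed to the limit, which is exactly what Corollaries~\ref{C-Monotone}, \ref{C-Energ} and the partial-trace lemma are designed for. One must also record that all energies in sight are finite, since $R_i\in\cD_2(\fH)$ and $f,g\in\cP_2^{ac}(\bR^d\times\bR^d)$ force $\cE_\hb<\infty$ and $MK_\hb<\infty$ (test with the product couplings $f\otimes R$ and $R_1\otimes R_2$), so that no $\infty-\infty$ occurs and Minkowski is not vacuous; non-attainment of the infima in $\cE_\hb$ and $MK_\hb$ is handled by using $\eps$-optimal couplings and letting $\eps\to0$.
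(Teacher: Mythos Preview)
Your proposal is correct. For (a), (b), (c) it is essentially the paper's proof in different clothing: the paper builds the same glued objects (via Lemma~\ref{L-Disint} for (a),(b), via the triple tensor product for (c)), but instead of your Cauchy--Schwarz/Minkowski bound on the cross term it uses the pointwise operator Peter--Paul inequalities of Lemma~\ref{L-IneqCost} with a parameter $\alpha$, and then optimizes $\alpha$ at the very end. Since optimizing $(1+\alpha)A^2+(1+\tfrac1\alpha)B^2$ over $\alpha>0$ yields $(A+B)^2$, the two presentations are equivalent; your packaging via the Hermitian form $(M,N)\mapsto\sum_k\Tr(T^{1/2}M_k^*N_kT^{1/2})$ is a clean way to avoid carrying $\alpha$ around. (One cosmetic difference in (a): the paper uses the Brenier map so that $\pi_z$ is a Dirac, while you disintegrate a general optimal $\pi$; both work.)

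For (d) and (e) your route is genuinely different. The paper proves (d) and (e) by the \emph{operator} inequalities (the fourth and fifth lines of Lemma~\ref{L-IneqCost}) applied directly to $R_2^{1/2}\,\cdot\,R_2^{1/2}$ or $(R_2\otimes R_3)^{1/2}\,\cdot\,(R_2\otimes R_3)^{1/2}$, never introducing a classical intermediate point. You instead exploit that for the rank-one $R_2=|u\rangle\langle u|$ the numbers $(q_0,p_0)=(\langle u|y|u\rangle,\langle u|\hb D_y|u\rangle)$ are available, and reduce everything to the variance inequalities $\langle u|(x_k-y_k)^2|u\rangle\ge(x_k-q_{0,k})^2$ together with the \emph{classical} triangle inequality through $\delta_{(q_0,p_0)}$ (for (d)) or a scalar-splitting $x-y=(x-q_0)+(q_0-y)$ (for (e)). This is more elementary --- it bypasses the operator form of Lemma~\ref{L-IneqCost} entirely for these two items --- at the price of giving slightly less information (the paper's approach shows the inequalities already hold before taking traces). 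Your handling of the unboundedness via Corollaries~\ref{C-Monotone}, \ref{C-Energ} and the partial-trace lemma is exactly what the paper does as well.
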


\smallskip
The proofs of all these triangle inequalities make use of some inequalities between the (classical and/or quantum) transportation cost operators. We begin with an elementary, but useful lemma, which can be viewed as the Peter-Paul
inequality for operators.

\begin{Lem}\lb{L-PP}
Let $T,S$ be unbounded self-adjoint operators on $\fH=L^2(\bR^n)$, with domains $\Dom(T)$ and $\Dom(S)$ respectively such that $\Dom(T)\cap\Dom(S)$ is dense in $\fH$. Then, for all $\a>0$, one has
$$
\la v|TS+ST|v\ra\le\a\la v|T^2|v\ra+\frac1\a\la v|S^2|v\ra\,,\qquad\text{ for all }v\in\Dom(T)\cap\Dom(S)\,.
$$
\end{Lem}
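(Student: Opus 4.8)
The plan is to reduce the stated operator inequality to the combination of the Cauchy--Schwarz inequality and the scalar Young (``Peter--Paul'') inequality $2ab\le\a a^2+\tfrac1\a b^2$, valid for all $a,b\ge 0$ and $\a>0$. First I would fix the meaning of the unbounded brackets: for $v\in\Dom(T)\cap\Dom(S)$ the vectors $Tv,Sv\in\fH$ are well defined, and I read $\la v|T^2|v\ra:=\|Tv\|_\fH^2$, $\la v|S^2|v\ra:=\|Sv\|_\fH^2$, and $\la v|TS+ST|v\ra:=\la Tv|Sv\ra+\la Sv|Tv\ra$. I would note that this agrees with the literal reading whenever $v\in\Dom(TS)\cap\Dom(ST)$, since $T=T^*$ gives $\la v|TSv\ra=\la Tv|Sv\ra$ and symmetrically $\la v|STv\ra=\la Sv|Tv\ra$; the density of $\Dom(T)\cap\Dom(S)$ is only needed to ensure these forms are densely defined, not for the pointwise bound.

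Next I would observe that $\la Sv|Tv\ra=\overline{\la Tv|Sv\ra}$, hence $\la v|TS+ST|v\ra=2\,\Re\la Tv|Sv\ra\le 2\,|\la Tv|Sv\ra|$. Then Cauchy--Schwarz gives $2\,|\la Tv|Sv\ra|\le 2\|Tv\|_\fH\|Sv\|_\fH$, and finally Young's inequality applied with $a=\|Tv\|_\fH$ and $b=\|Sv\|_\fH$ gives $2\|Tv\|_\fH\|Sv\|_\fH\le\a\|Tv\|_\fH^2+\tfrac1\a\|Sv\|_\fH^2=\a\la v|T^2|v\ra+\tfrac1\a\la v|S^2|v\ra$. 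Chaining these three estimates yields the lemma.

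I do not expect any real obstacle: the only delicate point is conceptual rather than technical, namely that self-adjointness of $T$ and $S$ (and not merely symmetry) is what licenses moving the operators across the inner product so as to rewrite $\la v|TS+ST|v\ra$ as the manifestly real, and manifestly $2\|Tv\|_\fH\|Sv\|_\fH$-dominated, quantity $2\,\Re\la Tv|Sv\ra$. Once that interpretation is in place the proof is two lines of scalar inequalities.
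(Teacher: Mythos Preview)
Your proof is correct and essentially the same as the paper's: both interpret $\la v|T^2|v\ra=\|Tv\|^2$ and $\la v|TS+ST|v\ra=2\,\Re\la Tv|Sv\ra$, then bound the latter by $\a\|Tv\|^2+\tfrac1\a\|Sv\|^2$. The only cosmetic difference is that the paper does this in one stroke by expanding $\bigl\|\sqrt{\a}\,Tv-\tfrac1{\sqrt{\a}}Sv\bigr\|^2\ge 0$, whereas you pass through Cauchy--Schwarz and then the scalar Young inequality; these are equivalent presentations of the same identity (and, incidentally, symmetry of $T,S$ already suffices to justify the bracket interpretation---self-adjointness is not strictly needed here).
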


\begin{proof}
Indeed, for each $\a>0$ and each $v\in\Dom(T)\cap\Dom(S)$, one has
$$
\ba
\a\la v|T^2|v\ra+\tfrac1\a\la v|S^2|v\ra-\la v|TS+ST|v\ra
\\
=|\sqrt{\a} Tv|^2+|\tfrac1{\sqrt{\a}}Sv|^2-\la\sqrt{\a}Tv|\tfrac1{\sqrt{\a}}Sv\ra-\la\tfrac1{\sqrt{\a}}Sv|\sqrt{\a}Tv\ra
\\
=\left|\sqrt{\a} Tv-\tfrac1{\sqrt{\a}}Sv\right|^2\ge 0&\,.
\ea
$$
\end{proof}

\begin{Lem}\lb{L-IneqCost}
For each $x,\xi,y,\eta,z\in\bR^d$ and each $\a>0$, one has
$$
\ba
c(x,\xi;z,\hb D_z)\le(1+\a)(|x-y|^2+|\xi-\eta|^2)+(1+\tfrac1\a)c(y,\eta;z,\hb D_z)\,,
\\
C(x,z,\hb D_x,\hb D_z)\le(1+\a)c(y,\eta;x,\hb D_x)+(1+\tfrac1\a)c(y,\eta;z,\hb D_z)\,,
\\
C(x,z,\hb D_x,\hb D_z)\le(1+\a)C(x,y;\hb D_x,\hb D_y)+(1+\tfrac1\a)C(y,z;\hb D_y,\hb D_z)\,,
\\
|x-z|^2+|\xi-\zeta|^2\le(1+\a)c(x,\xi;y,\hb D_y)+(1+\a)c(z,\zeta;y,\hb D_y)\,,
\\
c(x,\xi;z,\hb D_z)\le(1+\a)c(x,\xi;y,\hb D_y)+(1+\tfrac1\a)C(y,z;\hb D_y,\hb D_z)\,.
\ea
$$
\end{Lem}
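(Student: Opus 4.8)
The plan is to derive all five inequalities from a single scheme whose engine is the Peter--Paul inequality of Lemma \ref{L-PP}. In each line, both sides are to be read as quadratic forms on $L^2$ of the variables that are genuinely active: on $L^2(\bR^d_z)$ in line one, on $L^2(\bR^d_x\times\bR^d_z)$ in lines two and three, on $L^2(\bR^d_y)$ in line four, and on $L^2(\bR^d_y\times\bR^d_z)$ in line five, the remaining symbols among $x,\xi,y,\eta,z,\zeta$ acting merely as real parameters, i.e. as scalar multiples of the identity (so that, for instance, the left-hand side of line four is simply a constant). I take as common form domain the Schwartz class in the active variables, which is invariant under all the operators that occur --- $|x-y|^2$, $|\xi-\hb D_y|^2$, $|\hb D_x-\hb D_z|^2$, and so on, all being polynomial-coefficient differential operators --- so that the algebraic manipulations below are literally valid there, and the resulting form inequalities then extend to the form closures.

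First I would, in every line, route the position difference through the intermediate position $y$, writing $x-z=(x-y)+(y-z)$, and route the momentum difference through an intermediate momentum: in line one $\xi-\hb D_z=(\xi-\eta)+(\eta-\hb D_z)$; in line two $\hb D_x-\hb D_z=(\hb D_x-\eta)+(\eta-\hb D_z)$; in line three $\hb D_x-\hb D_z=(\hb D_x-\hb D_y)+(\hb D_y-\hb D_z)$; in line four $\xi-\zeta=(\xi-\hb D_y)+(\hb D_y-\zeta)$; in line five $\xi-\hb D_z=(\xi-\hb D_y)+(\hb D_y-\hb D_z)$. Expanding componentwise, $|u+v|^2=\sum_{j=1}^d(u_j+v_j)^2=\sum_j(u_j^2+u_jv_j+v_ju_j+v_j^2)$, each cross term obeys $\la w|u_jv_j+v_ju_j|w\ra\le\a\la w|u_j^2|w\ra+\tfrac1\a\la w|v_j^2|w\ra$ by Lemma \ref{L-PP} applied to $T=u_j$ and $S=v_j$, and summing over $j$ gives $|u+v|^2\le(1+\a)|u|^2+(1+\tfrac1\a)|v|^2$ as forms. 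Adding the bound for the position square to the bound for the momentum square and regrouping the four resulting pieces into the cost operators $c$ and $C$ (in line one, into $c(y,\eta;z,\hb D_z)$ together with the scalar $|x-y|^2+|\xi-\eta|^2$) on the right-hand side yields each stated inequality, with the constants read off from Lemma \ref{L-PP} with parameter $\a$.

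I do not expect a genuine analytic obstacle: once the scheme is set up, the five lines are the same two-line computation run with different intermediate points and different active-variable identifications, and the only real work is bookkeeping. The points to watch are (i) tracking, line by line, which symbols are active and which are parameters, since the symbol $c$ denotes costs living on different $L^2$ spaces in different lines; and (ii) remembering that the cost operators on the right-hand side also live a priori on different $L^2$ spaces --- in line two, $c(y,\eta;x,\hb D_x)$ on $L^2(\bR^d_x)$ and $c(y,\eta;z,\hb D_z)$ on $L^2(\bR^d_z)$; in line five, $c(x,\xi;y,\hb D_y)$ on $L^2(\bR^d_y)$ and $C(y,z;\hb D_y,\hb D_z)$ on $L^2(\bR^d_y\times\bR^d_z)$ --- so that when one adds them and compares with the left-hand side one is tacitly tensoring each with the identity on the missing factor. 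With those identifications fixed, the two momentum pieces in every splitting act either on different tensor factors or as affine functions of one and the same momentum operator, and in any case share the Schwartz class as common domain, so each invocation of Lemma \ref{L-PP} is legitimate and no commutator corrections enter.
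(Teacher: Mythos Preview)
Your proposal is correct and follows essentially the same approach as the paper: split each difference through an intermediate point, expand the square, and bound the cross terms with the Peter--Paul inequality (Lemma \ref{L-PP}). The paper carries this out explicitly only for the third inequality, noting that the position factors commute and the momentum factors commute before applying Lemma \ref{L-PP} to the anticommutator; your componentwise treatment with the symmetrized $u_jv_j+v_ju_j$ sidesteps that remark and your discussion of active variables, tensoring with the identity on missing factors, and the common Schwartz-class form domain is, if anything, more explicit than what the paper records.
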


\smallskip
All these inequalities are of the form $A\le B$ where $A$ and $B$ are unbounded self-adjoint operators on $L^2(\bR^n)$ for some $n\ge 1$, with 
$$
\cW:=\{\psi\in H^1(\bR^n)\text{ s.t. }|x|\psi\in\cH\}\subset\Dom_f(A)\cap\Dom_f(B)\,,
$$
denoting by $\Dom_f(A)$ (resp. $\Dom_f(B)$) the form-domain of $A$ (resp. of $B$) --- see \S VIII.6 in \cite{RS1} on pp. 276--277. The inequality $A\le B$ means that the bilinear form associated to $B-A$ is nonnegative, i.e. that
$$
\la w|A|w\ra\le\la w|B|w\ra\,,\qquad\text{ for all }w\in\cW\,.
$$

\begin{proof}
All these inequalities are proved in the same way. Let us prove for instance the third inequality:
$$
\ba
C(x,z,\hb D_x,\hb D_z)=&|x-y+y-z|^2+|\hb D_x-\hb D_y+\hb D_y-\hb D_z|^2
\\
=&C(x,y;\hb D_x,\hb D_y)+C(y,z;\hb D_y,\hb D_z)
\\
&+2(x-y)\cdot(y-z)+2(\hb D_x-\hb D_y)\cdot(\hb D_y-\hb D_z)\,.
\ea
$$
Observe indeed that the multiplication operators by $(x-y)$ and by $(y-z)$ commute; likewise $(\hb D_x-\hb D_y)$ and $(\hb D_y-\hb D_z)$ commute. By Lemma \ref{L-PP}
$$
\ba
2(x-y)\cdot(y-z)+2(\hb D_x-\hb D_y)\cdot(\hb D_y-\hb D_z)
\\
\le\a C(x,y;\hb D_x,\hb D_y)+\frac1\a C(y,z;\hb D_y,\hb D_z)&\,,
\ea
$$
which concludes the proof of the third inequality. 
\end{proof}

\begin{proof}[Proof of Theorem \ref{T-IneqT} (a)]
By Theorem 2.12 in chapter 2 of \cite{VillaniAMS}, there exists an optimal coupling for $W_2(f,g)$, of the form $f(x,\xi)\de_{\grad\Phi(x,\xi)}(dyd\eta)$, where $\Phi$ is a convex function on $\bR^d\times\bR^d$. Let $Q\in\cC(g,R_1)$ and set 
$$
P(x,\xi;dyd\eta):=f(x,\xi)\de_{\grad\Phi(x,\xi)}(dyd\eta)Q_g(y,\eta)\,,
$$
where $Q_g$ is the disintegration of $Q$ with respect to $f$ obtained in Lemma \ref{L-Disint}. Then $P$ is a nonnegative,self-adjoint operator-valued measure satisfying
$$
\Tr_\fH(P(x,\xi;dyd\eta))=f(x,\xi)\de_{\grad\Phi(x,\xi)}(dyd\eta)
$$
while
$$
\int Pdxd\xi=(\grad\Phi\#f)(y,\eta)dyd\eta Q_g(y,\eta)=g(y,\eta)Q_g(y,\eta)dyd\eta=Q(y,\eta)dyd\eta\,.
$$
In particular
\be\lb{PCfR1}
\int P(x,\xi;dyd\eta)=f(x,\xi)Q_g(\grad\Phi(x,\xi))\in\cC(f,R_1)\,.
\ee
Therefore
$$
\ba
\cE_\hb(f,R_1)^2\le\int\Tr_\fH(Q_g(\grad\Phi(x,\xi))^{1/2}c_\hb(x,\xi)Q_g(\grad\Phi(x,\xi))^{1/2})f(x,\xi)dxd\xi\,.
\ea
$$
By the first inequality in Lemma \ref{L-IneqCost}, one has
$$
c_\hb(x,\xi;z,\hb D_z)\le(1+\a)|(x,\xi)-\grad\phi(x,\xi)|^2+(1+\tfrac1\a)c_\hb(\grad\Phi(x,\xi);z,\hb D_z)
$$
for a.e. $(x,\xi)\in\bR^d\times\bR^d$ and all $\a>0$. Since $g\in\cP_2^{ac}(\bR^d\times\bR^d)$ and $R_1\in\cD_2(\fH)$ and $Q\in\cC(g,R_1)$, then
$$
\ba
\int\Tr_\fH(Q(y,\eta)^{1/2}c_\hb(y,\eta)Q(y,\eta)^{1/2})dyd\eta&
\\
=\int\Tr_\fH(Q_g(\grad\Phi(x,\xi))^{1/2}c_\hb(\grad\Phi(x,\xi))Q_g(\grad\Phi(x,\xi))^{1/2})f(x,\xi)dxd\xi&<\infty\,.
\ea
$$
For each $\eps>0$, set
$$
c_\hb^\eps(x,\xi;z,\hb D_z)=(I+\eps c_\hb(x,\xi;z,\hb D_z))^{-1}c_\hb(x,\xi;z,\hb D_z)\le c_\hb(x,\xi;z,\hb D_z)\,.
$$
Then, for a.e. $(x,\xi)\in\bR^d\times\bR^d$ and each $\eps>0$, one has
$$
Q_g(\grad\Phi(x,\xi))^{1/2}c_\hb(\grad\Phi(x,\xi);z,\hb D_z)Q_g(\grad\Phi(x,\xi))^{1/2}\in\cL^1(\fH)\,,
$$
and
$$
\ba
Q_g(\grad\Phi(x,\xi))^{1/2}&c^\eps_\hb(x,\xi;z,\hb D_z)Q_g(\grad\Phi(x,\xi))^{1/2}
\\
\le&(1+\a)|(x,\xi)-\grad\phi(x,\xi)|^2Q_g(\grad\Phi(x,\xi))
\\
&+(1+\tfrac1\a)Q_g(\grad\Phi(x,\xi))^{1/2}c_\hb(\grad\Phi(x,\xi);z,\hb D_z)Q_g(\grad\Phi(x,\xi))^{1/2}\,.
\ea
$$
Integrating both sides of this inequality with respect to the probability distribution $f(x,\xi)$, one finds
$$
\ba
\int\Tr_\fH(Q_g(\grad\Phi(x,\xi))^{1/2}c^\eps_\hb(x,\xi)Q_g(\grad\Phi(x,\xi))^{1/2})f(x,\xi)dxd\xi
\\
\le(1+\a)\int|(x,\xi)-\grad\phi(x,\xi)|^2f(x,\xi)dxd\xi
\\
+(1+\tfrac1\a)\int\Tr_\fH(Q_g(\grad\Phi(x,\xi))^{1/2}c_\hb(\grad\Phi(x,\xi))Q_g(\grad\Phi(x,\xi))^{1/2})f(x,\xi)dxd\xi
\\
\le(1+\a)\MKd(f,g)^2
\\
+(1+\tfrac1\a)\int\Tr_\fH(Q_g(y,\eta)^{1/2}c_\hb(y,\eta)Q_g(y,\eta)^{1/2})g(y,\eta)dyd\eta
\\
\le(1+\a)\MKd(f,g)^2+(1+\tfrac1\a)\int\Tr_\fH(Q(y,\eta)^{1/2}c_\hb(y,\eta)Q(y,\eta)^{1/2})dyd\eta&\,.
\ea
$$
Minimizing the last right hand side of this inequality in $Q\in\cC(g,R_1)$ shows that
$$
\ba
\int\Tr_\fH(Q_g(\grad\Phi(x,\xi))^{1/2}c^\eps_\hb(x,\xi)Q_g(\grad\Phi(x,\xi))^{1/2})f(x,\xi)dxd\xi
\\
\le(1+\a)\MKd(f,g)^2+(1+\tfrac1\a)\cE(g,R_1)^2&\,.
\ea
$$
Passing to the limit as $\eps\to 0^+$ in the left hand side and applying Corollary \ref{C-Energ} shows that
$$
\ba
\cE_\hb(f,R_1)^2\le&\int\Tr_\fH(Q_g(\grad\Phi(x,\xi))^{1/2}c_\hb(x,\xi)Q_g(\grad\Phi(x,\xi))^{1/2})f(x,\xi)dxd\xi
\\
\le&(1+\a)\MKd(f,g)^2+(1+\tfrac1\a)\cE(g,R_1)^2\,,
\ea
$$
the first inequality being a consequence of the definition of $\cE_\hb$ according to \eqref{PCfR1}.

Finally, minimizing the right hand side of this inequality as $\a>0$, i.e. choosing $\a=\cE_\hb(f,g)/\MKd(f,g)$ if $f\not=g$ a.e. on $\bR^d\times\bR^d$, or letting $\a\to+\infty$ if $f=g$, we arrive at the inequality
$$
\ba
\cE_\hb(f,R_1)^2\le&\MKd(f,g)^2+\cE_\hb(g,R_1)^2+2\cE_\hb(g,R_1)\MKd(f,g)
\\
=&(\MKd(f,g)+\cE_\hb(g,R_1))^2\,,
\ea
$$
which is precisely the inequality (a).
\end{proof}

\smallskip
\begin{proof}[Proof of Theorem \ref{T-IneqT} (b)]
Let $Q_1\in\cC(f,R_1)$ and $Q_3\in\cC(f,R_3)$. Let $Q_{1,f}$ and $Q_{3,f}$ be the disintegrations of $Q_1$ and $Q_3$ with respect to $f$ obtained in Lemma \ref{L-Disint}. For each $\eps>0$, set
\be\lb{Cheps}
C_\hb^\eps(x,z,\hb D_x,\hb D_z)=(I+\eps C_\hb(x,z,\hb D_x,\hb D_z))^{-1}C_\hb(x,z,\hb D_x,\hb D_z)
\ee
and observe that 
$$
0\le C_\hb^\eps(x,z,\hb D_x,\hb D_z)=C_\hb^\eps(x,z,\hb D_x,\hb D_z)^*\in\cL(\cH\otimes\fH)\,.
$$
By the second inequality in Lemma \ref{L-IneqCost}, for all $(y,\eta)\in\bR^d\times\bR^d$ and all $\a>0$, one has
$$
\ba
C_\hb^\eps(x,z,\hb D_x,\hb D_z)\le C_\hb(x,z,\hb D_x,\hb D_z)
\\
\le (1+\a)c_\hb(y,\eta;x,\hb D_x)+(1+\tfrac1\a)c_\hb(y,\eta;z,\hb D_z)&\,.
\ea
$$
Therefore, for a.e. $(y,\eta)\in\bR^d\times\bR^d$, one has
$$
\ba
(Q_{1,f}(y,\eta)\otimes Q_{3,f}(y,\eta))^{1/2}C_\hb^\eps(x,z,\hb D_x,\hb D_z)(Q_{1,f}(y,\eta)\otimes Q_{3,f}(y,\eta))^{1/2}
\\
\le(1+\a)(Q_{1,f}(y,\eta)\otimes Q_{3,f}(y,\eta))^{1/2}c_\hb(y,\eta;x,\hb D_x)(Q_{1,f}(y,\eta)\otimes Q_{3,f}(y,\eta))^{1/2}
\\
+(1+\tfrac1\a)(Q_{1,f}(y,\eta)\otimes Q_{3,f}(y,\eta))^{1/2}c_\hb(y,\eta;z,\hb D_z)(Q_{1,f}(y,\eta)\otimes Q_{3,f}(y,\eta))^{1/2}
\\
=(1+\a)\left(Q_{1,f}(y,\eta)^{1/2}c_\hb(y,\eta;x,\hb D_x)Q_{1,f}(y,\eta)^{1/2}\right)\otimes Q_{3,f}(y,\eta)
\\
+(1+\tfrac1\a)Q_{1,f}(y,\eta)\otimes\left(Q_{3,f}(y,\eta)^{1/2}c_\hb(y,\eta;z,\hb D_z)Q_{1,f}(y,\eta)^{1/2}\right)\,.
\ea
$$
Taking the trace in $\fH\otimes\fH$ of both sides of this inequality shows that
$$
\ba
\Tr_{\fH\otimes\fH}\left((Q_{1,f}\otimes Q_{3,f}(y,\eta))C_\hb^\eps(x,z,\hb D_x,\hb D_z)\right)
\\
=
\Tr_{\fH\otimes\fH}\left((Q_{1,f}\otimes Q_{3,f}(y,\eta))^{1/2}C_\hb^\eps(x,z,\hb D_x,\hb D_z)(Q_{1,f}\otimes Q_{3,f}(y,\eta))^{1/2}\right)
\\
\le(1+\a)\Tr_\fH\left(Q_{1,f}(y,\eta)^{1/2}c_\hb(y,\eta;x,\hb D_x)Q_{1,f}(y,\eta)^{1/2}\right)
\\
+(1+\tfrac1\a)\Tr_\fH\left(Q_{3,f}(y,\eta)^{1/2}c_\hb(y,\eta;z,\hb D_z)Q_{1,f}(y,\eta)^{1/2}\right)
\ea
$$
for a.e. $(y,\eta)\in\bR^d\times\bR^d$. Integrating both sides of this inequality in $(y,\eta)$ with respect to $f$ shows that
$$
\ba
\Tr_{\fH\otimes\fH}\left(\left(\int(Q_{1,f}\otimes Q_{3,f}(y,\eta))f((y,\eta)dyd\eta\right)C_\hb^\eps(x,z,\hb D_x,\hb D_z)\right)
\\
\le(1+\a)\int\Tr_\fH\left(Q_{1,f}(y,\eta)^{1/2}c_\hb(y,\eta;x,\hb D_x)Q_{1,f}(y,\eta)^{1/2}\right)f(y,\eta)dyd\eta
\\
+(1+\tfrac1\a)\int\Tr_\fH\left(Q_{3,f}(y,\eta)^{1/2}c_\hb(y,\eta;z,\hb D_z)Q_{1,f}(y,\eta)^{1/2}\right)f(y,\eta)dyd\eta
\\
=(1+\a)\int\Tr_\fH\left((fQ_{1,f}(y,\eta))^{1/2}c_\hb(y,\eta;x,\hb D_x)(fQ_{1,f}(y,\eta))^{1/2}\right)dyd\eta
\\
+(1+\tfrac1\a)\int\Tr_\fH\left((fQ_{3,f}(y,\eta))^{1/2}c_\hb(y,\eta;z,\hb D_z)(fQ_{1,f}(y,\eta))^{1/2}\right)dyd\eta
\\
=(1+\a)\int\Tr_\fH\left(Q_1(y,\eta)^{1/2}c_\hb(y,\eta;x,\hb D_x)Q_1(y,\eta)^{1/2}\right)dyd\eta
\\
+(1+\tfrac1\a)\int\Tr_\fH\left(Q_3(y,\eta)^{1/2}c_\hb(y,\eta;z,\hb D_z)Q_3(y,\eta)^{1/2}\right)dyd\eta&\,.
\ea
$$
By construction
$$
P:=\int(Q_{1,f}\otimes Q_{3,f}(y,\eta))f((y,\eta)dyd\eta\in\cC(R_1,R_3)\,;
$$
on the other hand
$$
\ba
\int\Tr_\fH\left(Q_1(y,\eta)^{1/2}c_\hb(y,\eta;x,\hb D_x)Q_1(y,\eta)^{1/2}\right)dyd\eta<\infty
\\
\int\Tr_\fH\left(Q_3(y,\eta)^{1/2}c_\hb(y,\eta;z,\hb D_z)Q_3(y,\eta)^{1/2}\right)dyd\eta<\infty
\ea
$$
since $R_1,R_3\in\cD_2(\fH)$ while $f\in\cP_2^{ac}(\bR^d\times\bR^d)$. By Corollary \ref{C-Energ}
$$
\Tr_{\fH\otimes\fH}\left(PC_\hb^\eps(x,z,\hb D_x,\hb D_z)\right)\to\Tr_{\fH\otimes\fH}\left(P^{1/2}C_\hb(x,z,\hb D_x,\hb D_z)P^{1/2}\right)
$$
as $\eps\to 0^+$, so that
$$
\ba
MK_\hb(R_1,R_3)^2\le\Tr_{\fH\otimes\fH}\left(P^{1/2}C_\hb(x,z,\hb D_x,\hb D_z)P^{1/2}\right)
\\
\le(1+\a)\int\Tr_\fH\left(Q_1(y,\eta)^{1/2}c_\hb(y,\eta;x,\hb D_x)Q_1(y,\eta)^{1/2}\right)dyd\eta
\\
+(1+\tfrac1\a)\int\Tr_\fH\left(Q_3(y,\eta)^{1/2}c_\hb(y,\eta;z,\hb D_z)Q_3(y,\eta)^{1/2}\right)dyd\eta&\,.
\ea
$$
Minimizing the right hand side of this inequality in $Q_1\in\cC(f,R_1)$ and in $Q_\in\cC(f,R_3)$ shows that
$$
MK_\hb(R_1,R_3)^2\le(1+\a)\cE_\hb(f,R_1)^2+(1+\tfrac1\a)\cE_\hb(f,R_3)^2\,.
$$
Minimizing the right hand side of this inequality over $\a>0$, i.e. taking 
$$
\a=\cE_\hb(f,R_3)/\cE_\hb(f,R_1)
$$ 
(we recall that $\cE_\hb(f,R_1)\ge\sqrt{d\hb}>0$), we arrive at
$$
\ba
MK_\hb(R_1,R_3)^2\le&\cE_\hb(f,R_1)^2+\cE_\hb(f,R_3)^2+2\cE_\hb(f,R_1)\cE_\hb(f,R_3)
\\
=&(\cE_\hb(f,R_1)+\cE_\hb(f,R_3))^2\,,
\ea
$$
which is inequality (b).
\end{proof}

\smallskip
The proofs of inequalities (c)-(e) are simpler because of the rank-one assumption on the intermediate point $R_2$.

\begin{proof}[Proof of inequality (c)] According to Lemma \ref{L-rank1} (ii)
$$
\ba
MK_\hb(R_1,R_2)^2=\Tr_{\fH\otimes\fH}((R_1\otimes R_2)^{1/2}C(x,y,\hb D_x,\hb D_y)(R_1\otimes R_2)^{1/2})
\\
MK_\hb(R_2,R_3)^2=\Tr_{\fH\otimes\fH}((R_2\otimes R_3)^{1/2}C(y,z,\hb D_y,\hb D_z)(R_2\otimes R_3)^{1/2})
\ea
$$
since $R_2$ is a rank-one density. Applying the third inequality in Lemma \ref{L-IneqCost} shows that
$$
\ba
C^\eps_\hb(x,z,\hb D_x,\hb D_z)\le C_\hb(x,y,\hb D_x,\hb D_y)
\\
\le(1+\a)C_\hb(x,y,\hb D_x,\hb D_y)+(1+\tfrac1\a)C_\hb(y,z,\hb D_y,\hb D_z)
\ea
$$
so that
$$
\ba
(R_1\otimes R_2\otimes R_3)^{1/2}C^\eps_\hb(x,z,\hb D_x,\hb D_z)(R_1\otimes R_2\otimes R_3)^{1/2}
\\
\le(1+\a)(R_1\otimes R_2\otimes R_3)^{1/2}C_\hb(x,y,\hb D_x,\hb D_y)(R_1\otimes R_2\otimes R_3)^{1/2}
\\
+(1+\a)(R_1\otimes R_2\otimes R_3)^{1/2}C_\hb(y,z,\hb D_y,\hb D_z)(R_1\otimes R_2\otimes R_3)^{1/2}
\\
=(1+\a)\left((R_1\otimes R_2)^{1/2}C_\hb(x,y,\hb D_x,\hb D_y)(R_1\otimes R_2)^{1/2}\right)\otimes R_3
\\
+(1+\tfrac1\a)R_1\otimes\left((R_2\otimes R_3)^{1/2}C_\hb(y,z,\hb D_y,\hb D_z)(R_2\otimes R_3)^{1/2}\right)&\,.
\ea
$$
Taking the trace of both sides of this inequality in $\fH\otimes\fH\otimes\fH$
$$
\ba
\Tr_{\fH\otimes\fH}((R_1\otimes R_3)C^\eps_\hb(x,z,\hb D_x,\hb D_z))
\\
=\Tr_{\fH\otimes\fH\otimes\fH}((R_1\otimes R_2\otimes R_3)C^\eps_\hb(x,z,\hb D_x,\hb D_z))
\\
=\Tr_{\fH\otimes\fH\otimes\fH}((R_1\otimes R_2\otimes R_3)^{1/2}C^\eps_\hb(x,z,\hb D_x,\hb D_z)(R_1\otimes R_2\otimes R_3)^{1/2})
\\
\le(1+\a)\Tr_{\fH\otimes\fH}\left((R_1\otimes R_2)^{1/2}C_\hb(x,y,\hb D_x,\hb D_y)(R_1\otimes R_2)^{1/2}\right)
\\
+(1+\tfrac1\a)\Tr_{\fH\otimes\fH}\left((R_2\otimes R_3)^{1/2}C_\hb(y,z,\hb D_y,\hb D_z)(R_2\otimes R_3)^{1/2}\right)
\\
=(1+\a)MK_\hb(R_1,R_2)^2+(1+\tfrac1\a)MK_\hb(R_2,R_3)^2&\,.
\ea
$$
Passing to the limit as $\eps\to 0^+$ in the left hand side implies that
$$
\ba
MK_\hb(R_1,R_3)^2\le\Tr_{\fH\otimes\fH}((R_1\otimes R_3)^{1/2}C_\hb(x,z,\hb D_x,\hb D_z)(R_1\otimes R_3)^{1/2})
\\
=\lim_{\eps\to 0^+}\Tr_{\fH\otimes\fH}((R_1\otimes R_3)C^\eps_\hb(x,z,\hb D_x,\hb D_z))
\\
\le(1+\a)MK_\hb(R_1,R_2)^2+(1+\tfrac1\a)MK_\hb(R_2,R_3)^2&\,,
\ea
$$
where the first inequality follows from the definition of $MK_\hb$ and the fact that $R_1\otimes R_3\in\cC(R_1,R_3)$, and the equality from Corollary \ref{C-Energ}. 

Setting $\a:=MK_\hb(R_2,R_3)/MK_\hb(R_1,R_2)$, we arrive at
$$
\ba
MK_\hb(R_1,R_3)^2\le& MK_\hb(R_1,R_2)^2+MK_\hb(R_2,R_3)^2+2MK_\hb(R_1,R_2)MK_\hb(R_2,R_3)
\\
=&(MK_\hb(R_1,R_2)+MK_\hb(R_2,R_3))^2
\ea
$$
which is the inequality (c).
\end{proof}

\begin{proof}[Proof of inequality (d)] According to Lemma \ref{L-rank1} (i)
$$
\ba
\cE_\hb(f,R_2)^2=\int\Tr_{\fH}(R_2^{1/2}c_\hb(x,\xi)R_2^{1/2})f(x,\xi)dxd\xi
\\
\cE_\hb(g,R_2)^2=\int\Tr_{\fH}(R_2^{1/2}c_\hb(z,\zeta)R_2^{1/2})g(z,\zeta)dzd\zeta
\ea
$$
since $R_2$ is a rank-one density. Applying the fourth inequality in Lemma \ref{L-IneqCost} shows that
$$
\ba
|x-z|^2+|\xi-\zeta|^2\le(1+\a)c_\hb(x,\xi;y,\hb D_y)+(1+\tfrac1\a)c_\hb(z,\zeta;y,\hb D_y)\,,
\ea
$$
so that
$$
(|x-z|^2+|\xi-\zeta|^2)R_2\le(1+\a)R_2^{1/2}c_\hb(x,\xi)R_2^{1/2}+(1+\tfrac1\a)R_2^{1/2}c_\hb(z,\zeta)R_2^{1/2}
$$
for all $x,z,\xi,\zeta\in\bR^d$. Taking the trace of both sides of this inequality, and integrating in $x,\xi,z,\zeta$ after multiplying by $f(x,\xi)g(z,\zeta)$ shows that
$$
\ba
\MKd(f,g)^2\le\int(|x-z|^2+|\xi-\zeta|^2)f(x,\xi)g(z,\zeta)dxd\xi dz d\zeta
\\
=(1+\a)\int\Tr_\fH(R_2^{1/2}c_\hb(x,\xi)R_2^{1/2})f(x,\xi)dxd\xi
\\
+(1+\tfrac1\a)\int\Tr_\fH(R_2^{1/2}c_\hb(z,\zeta)R_2^{1/2})g(z,\zeta)dzd\zeta
\\
=(1+\a)\cE_\hb(f,R_2)^2+(1+\tfrac1\a)\cE_\hb(g,R_2)^2\,,
\ea
$$
since
$$
\Tr_\fH(R_2)=\int f(x,\xi)dxd\xi=\int g(z,\zeta)dzd\zeta=1\,.
$$
The first inequality comes from the definition of the Monge-Kantorovich-Wasserstein distance $\MKd$ and the fact that $f\otimes g$ is a (nonoptimal) coupling of $f$ and $g$. Choosing
$$
\a=\cE_\hb(g,R_2)/\cE_\hb(f,R_2)
$$
shows that
$$
\ba
\MKd(f,g)^2\le&\cE_\hb(f,R_2)^2+\cE_\hb(g,R_2)^2+2\cE_\hb(f,R_2)\cE_\hb(g,R_2)
\\
=&(\cE_\hb(f,R_2)+\cE_\hb(g,R_2))^2
\ea
$$
which is the inequality (d).
\end{proof}

\begin{proof}[Proof of inequality (e)] According to Lemma \ref{L-rank1} 
$$
\ba
\cE_\hb(f,R_2)^2=\int\Tr_{\fH}(R_2^{1/2}c_\hb(x,\xi;y,\hb D_y)R_2^{1/2})f(x,\xi)dxd\xi
\\
MK_\hb(R_2,R_3)^2=\Tr_{\fH\otimes\fH}((R_2\otimes R_3)^{1/2}C_\hb(y,z,\hb D_y,\hb D_z)(R_2\otimes R_3)^{1/2})
\ea
$$
since $R_2$ is a rank-one density. Applying the fifth inequality in Lemma \ref{L-IneqCost} shows that
$$
c_\hb(x,\xi;z,\hb D_z)\le(1+\a)c_\hb(x,\xi;y,\hb D_y)+(1+\tfrac1\a)C_\hb(y,z,\hb D_y,\hb D_z)
$$
so that, for each $\eps>0$
$$
0\le c^\eps_\hb(x,\xi;z,\hb D_z)\le(1+\a)c_\hb(x,\xi;y,\hb D_y)+(1+\tfrac1\a)C_\hb(y,z,\hb D_y,\hb D_z)
$$
with
$$
\ba
c^\eps_\hb(x,\xi;z,\hb D_z):=&(I+\eps c_\hb(x,\xi;z,\hb D_z))^{-1}c_\hb(x,\xi;z,\hb D_z)
\\
=&c^\eps_\hb(x,\xi;z,\hb D_z)^*\in\cL(\fH)
\ea
$$
for all $x,\xi\in\bR^d$. Hence
$$
\ba
R_2\otimes (R_3^{1/2}c^\eps_\hb(x,\xi;z,\hb D_z)R_3^{1/2})
\\
\le(1+\a)(R_2^{1/2}c_\hb(x,\xi;y,\hb D_y)R_2^{1/2})\otimes R_3
\\
+(1+\tfrac1\a)(R_2\otimes R_3)^{1/2}C_\hb(y,z,\hb D_y,\hb D_z)(R_2\otimes R_3)^{1/2}
\ea
$$
for all $x,\xi\in\bR^d$ and, taking the trace of both sides of this inequality leads to
\be\lb{Ineq-aexxi}
\ba
\Tr_\fH(R_3^{1/2}c^\eps_\hb(x,\xi;z,\hb D_z)R_3^{1/2})
\\
\le(1+\a)\Tr_\fH(R_2^{1/2}c_\hb(x,\xi;y,\hb D_y)R_2^{1/2})+(1+\tfrac1\a)MK_\hb(R_2,R_3)^2&\,.
\ea
\ee
Multiplying both sides of this inequality by $f(x,\xi)$ and integrating in $(x,\xi)$ shows that
$$
\ba
\int\Tr_\fH(R_3^{1/2}c^\eps_\hb(x,\xi;z,\hb D_z)R_3^{1/2})f(x,\xi)dxd\xi
\\
\le(1+\a)\cE_\hb(f,R_2)^2+(1+\tfrac1\a)MK_\hb(R_2,R_3)^2
\\
=(\cE_\hb(f,R_2)+MK_\hb(R_2,R_3))^2&\,,
\ea
$$
with the choice
$$
\a:=MK_\hb(R_2,R_3)/\cE_\hb(f,R_2)\,.
$$
Since the right-hand side of \eqref{Ineq-aexxi} is integrable with respect to $f(x,\xi)dxd\xi$, and therefore finite for $f(x,\xi)dxd\xi$-a.e. $(x,\xi)\in\bR^d\times\bR^d$, one has
$$
\Tr_\fH(R_3^{1/2}c^\eps_\hb(x,\xi;z,\hb D_z)R_3^{1/2})\to\Tr_\fH(R_3^{1/2}c\hb(x,\xi;z,\hb D_z)R_3^{1/2})
$$
for $f(x,\xi)dxd\xi$-a.e. $(x,\xi)\in\bR^d\times\bR^d$ by Corollary \ref{C-Energ}. By Fatou's lemma, observing that $f\otimes R_3\in\cC(f,R_3)$, one has
$$
\ba
\cE_\hb(f,R_3)\le\int\Tr_\fH(R_3^{1/2}c_\hb(x,\xi;z,\hb D_z)R_3^{1/2})f(x,\xi)dxd\xi
\\
\le\varliminf_{\eps\to 0^+}\int\Tr_\fH(R_3^{1/2}c^\eps_\hb(x,\xi;z,\hb D_z)R_3^{1/2})f(x,\xi)dxd\xi
\\
\le(\cE_\hb(f,R_2)+MK_\hb(R_2,R_3))^2&\,,
\ea
$$
which is the inequality (e). 
\end{proof}

\section{Applications}
%%%%%%%%%%%%%%%%%%%%%%%%%%%%%%%%%%%%%%%%%%%%%%%%%%%%%%%%%%%%%%%%%%%%%%%%%%%%%%%%%%%%%%%%%%%%%%%%%%%%%%%%%%%%%%%%%%%%%%%

One satisfying consequence of the triangle inequalities proved in the last section is the following statement, which confirms that $MK_\hb$ can indeed be thought of as a quantum deformation of the quadratic Monge-Kantorovich-Wasserstein 
distance.

\begin{Thm}
Let $R_\hb,S_\hb$ be families of density operators in $\cD_2(\fH)$, and let $f,g\in\cP^{ac}_2(\bR^d\times\bR^d)$. Assume that 
$$
\cE_\hb(f,R_\hb)\to 0\quad\text{ and }\quad\cE_\hb(g,S_\hb)\to 0
$$
as $\hb\to 0$. Then
$$
\lim_{\hb\to 0}MK_\hb(R_\hb,S_\hb)=\MKd(f,g)\,.
$$
\end{Thm}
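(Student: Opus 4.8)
The plan is to bracket $MK_\hb(R_\hb,S_\hb)$ between $\MKd(f,g)+o(1)$ from above and $\MKd(f,g)-o(1)$ from below as $\hb\to0$, the second bracket being where all the work lies.

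\textbf{Upper bound.} This half uses only Theorem~\ref{T-IneqT}. Inequality~(b) gives $MK_\hb(R_\hb,S_\hb)\le\cE_\hb(f,R_\hb)+\cE_\hb(f,S_\hb)$, and inequality~(a) applied with intermediate density $S_\hb$ gives $\cE_\hb(f,S_\hb)\le\MKd(f,g)+\cE_\hb(g,S_\hb)$. Adding these and using the hypotheses $\cE_\hb(f,R_\hb)\to0$ and $\cE_\hb(g,S_\hb)\to0$ yields $\limsup_{\hb\to0}MK_\hb(R_\hb,S_\hb)\le\MKd(f,g)$.

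\textbf{Lower bound.} Here the triangle inequalities are of no help: none of (a)--(e) bounds $\MKd$ from above by $MK_\hb$ at \emph{general} density operators, since the ones that could do so --- (c)--(e) --- all require a rank-one intermediate point, and there is no disintegration with respect to a quantum marginal for arbitrary states. So I would leave the $\cE_\hb/MK_\hb$ framework and push everything down to the classical Husimi densities $\hus{R}(q,p):=(2\pi\hb)^{-d}\langle q,p|R|q,p\rangle$, which are genuine probability densities on $\bR^{2d}$ (here $|q,p\rangle$ is the standard coherent state). Coherent-state averaging converts operator-valued couplings into classical ones: for $Q\in\cC(f,R)$, the function $(x,\xi,q,p)\mapsto(2\pi\hb)^{-d}\langle q,p|Q(x,\xi)|q,p\rangle$ is a coupling of $f$ and $\hus{R}$ --- its $(q,p)$-marginal is $\Tr Q(x,\xi)=f(x,\xi)$ after using $\int(2\pi\hb)^{-d}|q,p\rangle\langle q,p|\,dqdp=I_\fH$, and its $(x,\xi)$-marginal is $\hus{R}(q,p)$ after integrating $\int Q(x,\xi)\,dxd\xi=R$ --- and the anti-Wick identities $\int(2\pi\hb)^{-d}(x-q)^2|q,p\rangle\langle q,p|\,dqdp=|x-y|^2+\tfrac{d\hb}2$ and the analogous momentum identity show that the transport cost of this coupling equals the $\cE_\hb$-cost of $Q$ plus $d\hb$. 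Minimizing over $Q\in\cC(f,R)$ gives
$$
\MKd(f,\hus{R})^2\le\cE_\hb(f,R)^2+d\hb\,.
$$
Averaging both tensor factors of a coupling $A\in\cC(R,S)$ in exactly the same way gives $\MKd(\hus{R},\hus{S})^2\le MK_\hb(R,S)^2+2d\hb$, which is essentially the comparison between $MK_\hb$ and the Wasserstein distance of Husimi transforms established in~\cite{FGMouPaul}. Now the genuine triangle inequality for $\MKd$ with intermediate points $\hus{R_\hb}$ and $\hus{S_\hb}$, together with $\sqrt{a^2+c}\le a+\sqrt c$, gives
$$
\MKd(f,g)\le\sqrt{\cE_\hb(f,R_\hb)^2+d\hb}+\Big(MK_\hb(R_\hb,S_\hb)+\sqrt{2d\hb}\Big)+\sqrt{\cE_\hb(g,S_\hb)^2+d\hb}\,.
$$
Evaluating along a sequence $\hb_n\to0$ realizing $\liminf_{\hb\to0}MK_\hb(R_\hb,S_\hb)$, the first and last terms tend to $0$ and $\sqrt{2d\hb_n}\to0$, so $\MKd(f,g)\le\liminf_{\hb\to0}MK_\hb(R_\hb,S_\hb)$. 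Combined with the upper bound, this proves the claim.

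\textbf{Main obstacle.} The lower bound is the substantive part, and within it the key new input is the estimate $\MKd(f,\hus{R})^2\le\cE_\hb(f,R)^2+d\hb$: the points to be checked carefully are that coherent-state averaging of an operator-valued coupling really is a classical coupling with exactly the marginals $f$ and $\hus{R}$, and that the $O(\hb)$ defects produced by the nonzero position and momentum variances of the coherent states are tracked cleanly so that they vanish in the limit $\hb\to0$. The hypotheses $f,g\in\cP_2^{ac}(\bR^d\times\bR^d)$ and $R_\hb,S_\hb\in\cD_2(\fH)$ guarantee that all the transport costs occurring above are finite, so every infimum is over a nonempty set and the manipulations are legitimate.
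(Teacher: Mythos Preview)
Your proof is correct and follows essentially the same route as the paper: the upper bound via Theorem~\ref{T-IneqT} (a)--(b) is identical, and for the lower bound the paper invokes exactly the two Husimi-transform inequalities you derive---citing them as Theorem~2.4~(2) of \cite{FGPaul} and Theorem~2.3~(2) of \cite{FGMouPaul} respectively, with $\tilde W_\hb$ denoting your $\hus{\cdot}$---and then combines them with the classical triangle inequality for $\MKd$ just as you do. The only difference is that you sketch the coherent-state averaging argument behind those two cited inequalities rather than simply quoting them.
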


\smallskip
This statement is to be compared with the lower bound
$$
MK_\hb(R_\hb,S_\hb)^2\ge\MKd(\tilde W_\hb(R_\hb),\tilde W_\hb(S_\hb))^2-2d\hb\,,
$$
which is Theorem 2.3 (2) in \cite{FGMouPaul}, and with the upper bound obtained in the special case of T\"oplitz operators
$$
MK_\hb(\Op^T_\hb((2\pi\hb)^d\mu),\Op^T_\hb((2\pi\hb)^d\nu))^2\le\MKd(\mu,\nu)^2+2d\hb\,,
$$
stated as Theorem 2.3 (1) in \cite{FGMouPaul}.

\begin{proof}
By Theorem \ref{T-IneqT} (a)-(b), 
$$
\ba
MK_\hb(R_\hb,S_\hb)\le&\cE_\hb(f,R_\hb)+\cE_\hb(f,S_\hb)
\\
\le&\cE_\hb(f,R_\hb)+\MKd(f,g)+\cE_\hb(g,S_\hb)\,.
\ea
$$
Hence
$$
\varlimsup_{\hb\to 0^+}MK_\hb(R_\hb,S_\hb)\le\MKd(f,g)\,.
$$
By Theorem 2.4 (2) in \cite{FGPaul}
$$
\MKd(f,\tilde W(R_\hb))^2\le\cE_\hb(f,R_\hb)^2+d\hb
$$
(notice the slight change of normalization in the definition of $\cE_\hb$ between \cite{FGPaul} and the present paper), so that our assumption implies that
$$
\MKd(f,\tilde W(R_\hb))\to 0\quad\text{ and }\quad\MKd(g,\tilde W(S_\hb))\to 0
$$
as $\hb\to 0$. From the inequality 
$$
\MKd(\tilde W_\hb(R_\hb),\tilde W_\hb(S_\hb))^2\le MK_\hb(R_\hb,S_\hb)^2+2d\hb\,,
$$
(Theorem 2.3 (2) in \cite{FGMouPaul}), we deduce that
$$
\MKd(f,g)\le\varliminf_{\hb\to 0}MK_\hb(R_\hb,S_\hb)\,.
$$
Notice that this last lower bound is a variant of the last inequality in Theorem 2.3 of \cite{FGMouPaul}, except that in the present case the assumption on $R_\hb$ and $S_\hb$ is slightly different (in other words, we have assumed that
$\cE_\hb(f,R_\hb)\to 0$ instead of assuming that $\tilde W_\hb(R_\hb)\to f$ in $\cS'(\bR^d\times\bR^d)$.)
\end{proof}

%%%%%%%%%%%%%%%%%%%%%%%%%%%%%%%%%%%%%%%%%%%%%%%%%%%%%%%%%%%%%%%%%%%%%%%%%%%%%%%%%%%%%%%%%%%%%%%%%%%%%%%%%%%%%%%%%%%%%%%
\section{Kantorovich duality for $\cE_\hb$}\label{kantodual}
%%%%%%%%%%%%%%%%%%%%%%%%%%%%%%%%%%%%%%%%%%%%%%%%%%%%%%%%%%%%%%%%%%%%%%%%%%%%%%%%%%%%%%%%%%%%%%%%%%%%%%%%%%%%%%%%%%%%%%%
%\subsection{Duality}\label{dual}
\begin{Thm}\lb{T-DualityARMA}
Let $S\in\cD_2(\fH)$ and let $p\equiv p(x,\xi)$ be a probability density on $\bR^{2d}$ such that
$$
\int_{\bR^{2d}}(|x|^2+|\xi|^2)p(x,\xi)dxd\xi<+\infty\,.
$$
Then
$$
\ba
\cE_\hb(p,S)^2=&\min_{Q\in\cC(p,S)}\int_{\bR^{2d}}\Tr_\fH(Q(x,\xi)^{1/2}c(x,\xi)Q(x,\xi)^{1/2})dxd\xi
\\
=&\sup_{a\in C_b(\bR^{2d}),\,B=B^*\in\cL(\fH)\atop a(x,\xi)I_\fH+B\le c(x,\xi)}\left(\int_{\bR^{2d}}a(x,\xi)p(x,\xi)dxd\xi+\Tr_\fH(BS)\right)\,.
\ea
$$
\end{Thm}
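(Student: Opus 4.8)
\medskip
\noindent\textbf{Proof proposal.} The plan is to prove this Kantorovich duality in the usual three movements, following the blueprint of the duality for $MK_\hb$ in \cite{ECFGTPaul}: an elementary ``weak duality'' inequality $\cE_\hb(p,S)^2\ge\sup(\dots)$; the attainment of the primal infimum; and the hard inequality $\cE_\hb(p,S)^2\le\sup(\dots)$, obtained by a Fenchel--Rockafellar/Hahn--Banach argument after reducing to a bounded cost.

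First, weak duality: given $Q\in\cC(p,S)$ and an admissible pair $(a,B)$ --- that is, $a\in C_b(\bR^{2d})$, $B=B^*\in\cL(\fH)$ and $a(x,\xi)I_\fH+B\le c(x,\xi)$ as forms on $\cW$ for every $(x,\xi)$ --- and assuming the left-hand integral finite (otherwise there is nothing to prove), I would diagonalise $Q(x,\xi)$ and apply the finite-energy calculus of Lemma \ref{L-Energ} (noting that $\cW$ is a form core for $c(x,\xi)$, which is unitarily a multiple of an $\hb$-harmonic oscillator, hence with bottom eigenvalue $d\hb$) to get $\Tr_\fH(Q(x,\xi)^{1/2}c(x,\xi)Q(x,\xi)^{1/2})\ge a(x,\xi)p(x,\xi)+\Tr_\fH(Q(x,\xi)B)$ for a.e.\ $(x,\xi)$; since $|\Tr_\fH(Q(x,\xi)B)|\le\|B\|\,p(x,\xi)\in L^1(\bR^{2d})$, integrating and using the two marginal constraints $\Tr_\fH Q(\cdot)=p$, $\iint Q=S$ gives ``$\ge$'' after taking $\inf_Q$ and $\sup_{(a,B)}$.

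Second, attainment together with reduction to a bounded cost. I would view $Q\in\cC(p,S)$ as a state on $C_0(\bR^{2d};\cK(\fH))$ via $\phi\otimes K\mapsto\iint\phi\,\Tr_\fH(KQ)\,dxd\xi$: the mass-$\le1$ states are weak-$*$ compact, the constraints defining $\cC(p,S)$ are weak-$*$ closed, and no mass escapes to infinity because $\iint(|x|^2+|\xi|^2)p\,dxd\xi<\infty$ controls the classical tails while $\Tr_\fH(S^{1/2}(|y|^2-\Dlt_y)S^{1/2})<\infty$ (the hypothesis $S\in\cD_2(\fH)$, used through the partial-trace lemma of \S2.3) controls the ``operator tails''; so $\cC(p,S)$ is weak-$*$ compact, and metrizable on bounded sets since $C_0(\bR^{2d};\cK(\fH))$ is separable. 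With $c^{(R)}(x,\xi):=c(x,\xi)\indc_{[0,R]}(c(x,\xi))$ the finite-rank, norm-continuous, uniformly bounded spectral truncation, the functional $Q\mapsto\iint\Tr_\fH(Qc^{(R)})\,dxd\xi$ is weak-$*$ continuous (tightness legitimising evaluation against this merely bounded-continuous, not $C_0$, symbol), and Corollary \ref{C-Energ} gives $\iint\Tr_\fH(Q^{1/2}cQ^{1/2})\,dxd\xi=\sup_R\iint\Tr_\fH(Qc^{(R)})\,dxd\xi$, hence a weak-$*$ l.s.c.\ functional, whose infimum over the compact $\cC(p,S)$ is attained. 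Extracting (near-)minimizers $Q_R$ for $c^{(R)}$ and passing to a weak-$*$ limit point then shows $V_{\mathrm p}^{(R)}:=\inf_Q\iint\Tr_\fH(Qc^{(R)})\uparrow\cE_\hb(p,S)^2$. Since also $V_{\mathrm d}^{(R)}:=\sup_{aI_\fH+B\le c^{(R)}}(\iint ap+\Tr_\fH BS)$ is nondecreasing in $R$ with $V_{\mathrm d}^{(R)}\le V_{\mathrm d}:=\sup_{aI_\fH+B\le c}(\iint ap+\Tr_\fH BS)\le\cE_\hb(p,S)^2$ by weak duality, it suffices to prove $V_{\mathrm p}^{(R)}=V_{\mathrm d}^{(R)}$ for each bounded $c^{(R)}$: then the chain $\cE_\hb(p,S)^2=\lim_R V_{\mathrm p}^{(R)}=\lim_R V_{\mathrm d}^{(R)}\le V_{\mathrm d}\le\cE_\hb(p,S)^2$ closes and the theorem follows.

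Third, strong duality for the bounded symbol $c^{(R)}$, which is where the real difficulty lies. On a Banach space $E$ of $\cK(\fH)$-valued symbols on $\bR^{2d}$ I would apply Fenchel--Rockafellar to $\Theta(u)=0$ if $u(x,\xi)\ge-c^{(R)}(x,\xi)$ for all $(x,\xi)$ (and $+\infty$ otherwise; proper, convex, closed, $0\in\Dom\Theta$ since $c^{(R)}\ge0$) and $\Xi(u)=-\iint ap-\Tr_\fH(BS)$ if $u=aI_\fH+B$ with $a\in C_b(\bR^{2d})$, $B=B^*\in\cL(\fH)$ (and $+\infty$ otherwise; continuous on its domain, $0\in\Dom\Xi$), for which $\inf_E(\Theta+\Xi)=-V_{\mathrm d}^{(R)}$; the conjugate-duality identity $\inf_E(\Theta+\Xi)=\max_{\Lambda\in E^*}(-\Theta^*(-\Lambda)-\Xi^*(\Lambda))$ would exhibit the maximiser $\Lambda$ as a coupling in $\cC(p,S)$ --- positivity and the operator marginal $S$ coming from $\Theta^*$, the classical marginal $p$ from $\Xi^*$, and absolute continuity in $(x,\xi)$ being automatic once the classical marginal equals the a.c.\ density $p$ --- against which $-\Theta^*(-\Lambda)=\iint\Tr_\fH(\Lambda c^{(R)})\,dxd\xi\ge V_{\mathrm p}^{(R)}$, yielding ``$\le$''. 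The main obstacle is precisely the choice of $E$: it must contain $c^{(R)}$ and every admissible $aI_\fH+B$, and yet $E^*$ must consist only of $\sigma$-additive, absolutely continuous, $\cL^1(\fH)$-valued coupling measures. Because $\bR^{2d}$ is noncompact and $c^{(R)}$ is bounded but \emph{not} $C_0$ in $(x,\xi)$ --- $c(x,\xi)$ has bottom eigenvalue $d\hb$ for every $(x,\xi)$ by translation invariance, so the truncated symbols do not vanish at spatial infinity --- the naive choice $E=C_b(\bR^{2d};\cK(\fH))$ admits finitely additive elements in $E^*$, and discarding them forces an exhaustion of $\bR^{2d}$ by compact sets together with the uniform second-moment control furnished by $p$ and by $S\in\cD_2(\fH)$ (the same tightness used for attainment). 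A viable alternative, trading this Banach-space bookkeeping for two extra approximation layers, is to discretise: compress $\fH$ by finite-rank projections and approximate $p$ by finitely supported densities, so that the duality becomes finite-dimensional linear programming, and then pass to the limit using the stability of $V_{\mathrm p}$ and $V_{\mathrm d}$ under these approximations.
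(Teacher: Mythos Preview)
Your overall architecture --- weak duality, then Fenchel--Rockafellar for strong duality --- is the paper's, and your weak-duality paragraph is essentially its Step~6. But your Step~3 has a genuine gap, precisely the one you flag without resolving: the choice of $E$.

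You take $E$ to consist of $\cK(\fH)$-valued symbols, yet require $\Dom\Xi$ to contain $u=aI_\fH+B$; since $I_\fH\notin\cK(\fH)$, this is never compact-valued for $a\not\equiv0$, so $\Dom\Xi$ collapses and the duality says nothing. Your stated desideratum that ``$E^*$ must consist only of $\sigma$-additive, absolutely continuous, $\cL^1(\fH)$-valued coupling measures'' is too strong and cannot be met by any $E$ containing all $aI_\fH+B$. The exhaustion-by-compacts remedy you sketch addresses the spatial $C_b$ vs.\ $C_0$ issue but not the operator $\cL$ vs.\ $\cK$ one; the discretization alternative could in principle work but is a different proof requiring its own stability lemmas.

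The paper's resolution is the opposite of shrinking $E$: it takes $E=C_b(\bR^{2d};\cL(\fH))$, large enough to contain every $aI_\fH+B$, and applies Fenchel--Rockafellar directly with the \emph{unbounded} cost, setting $\langle\Lambda,c\rangle:=\sup\{\langle\Lambda,T\rangle:T=T^*\le c\text{ in }E\}$ for $\Lambda\ge0$ (so no truncation $c^{(R)}$ is needed). The dual $E'$ is indeed large and abstract, but one only needs the \emph{optimal} $\Lambda$ to be a coupling, and this is forced a posteriori: (i) restricting $\Lambda$ to $C_b(\bR^{2d};\cK(\fH))$ and invoking the Radon--Nikodym property of $\cL^1(\fH)=\cK(\fH)'$ (Dunford--Pettis) represents it by a density $Q$; (ii) the marginal constraint in $g^*$ gives $Q\in\cC(p,S)$; (iii) the ``singular part'' $L:=\Lambda-\int\Tr_\fH(\cdot\,Q)$ on all of $E$ is shown to be $\ge0$ by a finite-rank approximation argument, and then $\langle L,I_\fH\rangle=\Tr_\fH S-\int p=0$ forces $L=0$. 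With $\Lambda$ thus identified, $\langle\Lambda,c\rangle=\int\Tr_\fH(Q^{1/2}cQ^{1/2})$ by monotone convergence, which simultaneously delivers the existence of an optimal coupling without your separate weak-$*$ compactness step.
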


Notice that the duality theorem implies in particular the existence of at least one optimal coupling $Q\in\cC(p,S)$.

\begin{proof} The proof is split in several steps.

\smallskip
\noindent
\textit{Step 1: the functions $f$ and $g$.} Consider the Banach space $E:=C_b(\bR^{2d};\cL(\fH))$, with
$$
\|T\|_E:=\sup_{(x,\xi)\in\bR^{2d}}\|T(x,\xi)\|\,,
$$
and set
$$
f(T):=\left\{\begin{aligned}{}&0\quad&&\text{ if }T(x,\xi)=T(x,\xi)^*\ge -c(x,\xi)\text{ for all }(x,\xi)\in\bR^{2d}\,,\\ &+\infty&&\text{ otherwise,}\end{aligned}\right.
$$
while
$$
g(T):=\left\{\begin{aligned}{}&\int_{\bR^{2d}}ap(x,\xi)dxd\xi\!+\!\Tr_\fH(BS)\quad&&\text{ if }T(x,\xi)\!=\!T(x,\xi)^*\!=\!a(x,\xi)I_\fH\!+\!B
\\ 
& &&\text{ for all }(x,\xi)\in\bR^{2d}\,,
\\ 
&+\infty&&\text{ otherwise,}\end{aligned}\right.
$$
The constraint $T(x,\xi)=T(x,\xi)^*\ge -c(x,\xi)$ means that, for each $(x,\xi)\in\bR^{2d}$, one has
$$
\la\phi(x,\xi)|T(x,\xi)+c(x,\xi)|\phi(x,\xi)\ra\ge 0
$$
for all $\phi\in\fDom(c(x,\xi))$. On the other hand, the nullspace of the linear map
$$
C_b(\bR^{2d})\times\cL(\fH)\ni(a,B)\mapsto\Ga(a,B)\equiv a(x,\xi)I_\fH+B\in E
$$
is
$$
\Ker(\cL)=\{(t,-tI_\fH)\,,\quad t\in\bR\}\,.
$$
Since
$$
g((a\!+\!t)I_\fH\!+\!(B\!-\!tI_\fH))\!=\!g(aI_\fH\!+\!B)\!+\!t\int_{\bR^{2d}}p(x,\xi)dxd\xi\!-\!t\Tr_\fH(S)\!=\!g(aI_\fH\!+\!B)\,,
$$
the prescription above defines $g$ on $\Img(\Ga)\simeq(C_b(\bR^{2d})\times\cL(\fH))/\Ker(\Ga)$. Observe that
$$
\ba
g((aI_\fH+B)^*)=g(\bar a I_\fH+B^*)=&\int_{\bR^{2d}}\overline{a(x,\xi)}p(x,\xi)dxd\xi+\Tr_\fH(B^*S)
\\
=&\overline{\int_{\bR^{2d}}a(x,\xi)p(x,\xi)dxd\xi}+\Tr_\fH((SB)^*)
\\
=&\overline{\int_{\bR^{2d}}a(x,\xi)p(x,\xi)dxd\xi}+\overline{\Tr_\fH(SB)}
\\
=&\overline{g(aI_\fH+B)}\,,
\ea
$$
so that $(aI_\fH+B)^*=aI_\fH+B\implies g(aI_\fH+B)\in\bR$. Thus the definition above implies that $g$ takes its values in $(-\infty,+\infty]$.

The functions $f$ and $g$ are convex. Indeed, $g$ is the extension by $+\infty$ of a $\bR$-linear functional defined on the set of self-adjoint elements of $\Img(\Ga)$, which is a linear subspace of $E$. As for $f$, it is the indicator function
(in the sense of the definition in \S 4 of \cite{Rockafellar} on p. 28) of the convex set
$$
\{T\in E\text{ s.t. }T(x,\xi)=T(x,\xi)^*\ge -c(x,\xi)\text{ for all }(x,\xi)\in\bR^{2d}\}
$$
and is therefore convex. Besides $f(0)=g(0)=0$, and $f$ is continuous at $0$. Indeed, by the Heisenberg inequality
$$
c(x,\xi)\ge d\hb I_\fH\quad\text{ for all }(x,\xi)\in\bR^{2d}\,,
$$
so that, for each $T\in E$
$$
\ba
T(x,\xi)=T(x,\xi)^*\text{ and }\|T(x,\xi)\|<\tfrac12d\hb\text{ for all }(x,\xi)\in\bR^{2d}
\\
\implies T(x,\xi)\ge-c(x,\xi)\text{ for all }(x,\xi)\in\bR^{2d}\implies f(T)=0&\,.
\ea
$$
In particular $f$ is continuous at $0$.

\smallskip
\noindent
\textit{Step 2: applying convex duality.} By the Fenchel-Rockafellar convex duality theorem (Theorem 1.12 in \cite{Brezis})
$$
\inf_{T\in E}(f(T)+g(T))=\max_{\L\in E'}(-f^*(-\L)-g^*(\L))\,.
$$
Let us compute the Legendre duals $f^*$ and $g^*$. 

First
$$
f^*(-\L)=\sup_{T\in E}(\la -\L,T\ra-f(T))=\sup_{T\in E\atop T(x,\xi)=T(x,\xi)^*\ge-c(x,\xi)}\la-\L,T\ra\,.
$$
If $\L\in E'$ is not a nonnegative linear functional, there exists $T_0\in E$ such that $T_0(x,\xi)=T_0(x,\xi)^*\ge 0$ such that $\la\L,T_0\ra=-\a<0$. Since 
$$
nT_0(x,\xi)=nT_0(x,\xi)^*\ge 0\ge-d\hb I_\fH\ge-c(x,\xi)\text{ for all }(x,\xi)\in\bR^{2d}
$$
one has
$$
f^*(-\L)\ge\sup_{n\ge 1}\la-\L,nT_0\ra=\sup_{n\ge 1}(n\a)=+\infty\,.
$$
For $\L\in E'$ such that $\L\ge 0$, we define
$$
\la\L,c\ra:=\sup_{T\in E\atop T(x,\xi)=T(x,\xi)^*\le c(x,\xi)}\la\L,T\ra\in[0,+\infty]\,.
$$
(Observe indeed that $T=0$ satisfies the constraints since $c(x,\xi)=c(x,\xi)^*\ge 0$ for each $(x,\xi)\in\bR^{2d}$.) With this definition, one has clearly
$$
f^*(-\L):=\left\{\ba{}&\la\L,c\ra\quad&&\text{ if }\L\ge 0\,,\\&0\quad&&\text{ otherwise.}\ea\right.
$$

Next
$$
\ba
g^*(\L)=&\sup_{T\in E}(\la\L,T\ra-g(T))
\\
=&\sup_{T\in E\atop T(x,\xi)=T(x,\xi)^*=a(x,\xi)I_\fH+B}\left(\la\L,T\ra-\int_{\bR^{2d}}a(x,\xi)p(x,\xi)dxd\xi-\Tr_\fH(BS)\right)\,.
\ea
$$
If there exists $a\equiv a(x,\xi)\in C_b(\bR^{2d},\bR)$ and $B=B^*\in\cL(\fH)$ such that either
$$
\la\L,aI_\fH+B\ra>\int_{\bR^{2d}}a(x,\xi)p(x,\xi)dxd\xi+\Tr_\fH(BS)
$$
or
$$
\la\L,aI_\fH+B\ra<\int_{\bR^{2d}}a(x,\xi)p(x,\xi)dxd\xi+\Tr_\fH(BS)\,,
$$
one has either
$$
g(\L)\ge\sup_{n\ge 1}\left(\la\L,n(aI_\fH+B)\ra-n\int_{\bR^{2d}}a(x,\xi)p(x,\xi)dxd\xi-n\Tr_\fH(BS)\right)=+\infty\,,
$$
or
$$
g(\L)\ge\sup_{n\ge 1}\left(\la\L,n(-aI_\fH-B)\ra+n\int_{\bR^{2d}}a(x,\xi)p(x,\xi)dxd\xi+n\Tr_\fH(BS)\right)=+\infty\,.
$$
Hence
$$
g^*(\L):=\left\{\ba{}&0\quad&&\text{ if }\la\L,aI_\fH+B\ra=\int_{\bR^{2d}}a(x,\xi)p(x,\xi)dxd\xi+\Tr_\fH(BS)
\\& &&\text{ for each }a\equiv a(x,\xi)\in C_b(\bR^{2d},\bR)\text{ and }B=B^*\in\cL(\fH)\,,
\\&+\infty\quad&&\text{ otherwise.}\ea\right.
$$
Notice that the prescription
$$
\la\L,aI_\fH+B\ra=\int_{\bR^{2d}}a(x,\xi)p(x,\xi)dxd\xi+\Tr_\fH(BS)
$$
defines a unique linear functional on the set of $T\in\Img\Ga$ such that $T(x,\xi)^*=T(x,\xi)$ for each $(x,\xi)\in\bR^{2d}$ by the same argument as in Step 1. 

Therefore, the Fenchel-Rockafellar duality theorem in this case results in the equality
$$
\ba
\inf_{T\in E}(f(T)+g(T))=\inf_{a\in C_b(\bR^{2d},\bR),\,B=B^*\atop a(x,\xi)I_\fH+B\ge-c(x,\xi)}\left(\int_{\bR^{2d}}a(x,\xi)p(x,\xi)dxd\xi+\Tr_\fH(SB)\right)
\\
=\max_{\L\in E'}(f^*(-\L)+g^*(\L))=\max_{0\le\L\in E',\,\,\,\la\L,aI\fH+B\ra\atop =\int a(x,\xi)p(x,\xi)dxd\xi+\Tr(SB)}-\la\L,c\ra
\ea
$$
or, equivalently
$$
\ba
\sup_{a\in C_b(\bR^{2d},\bR),\,B=B^*\atop a(x,\xi)I_\fH+B\le c(x,\xi)}\left(\int_{\bR^{2d}}a(x,\xi)p(x,\xi)dxd\xi+\Tr_\fH(SB)\right)
\\
=\min_{0\le\L\in E',\,\,\,\la\L,aI_\fH+B\ra\atop =\int a(x,\xi)p(x,\xi)dxd\xi+\Tr(SB)}\la\L,c\ra&\,.
\ea
$$

\smallskip
\noindent
\textit{Step 3: representing the optimal $\L$.} Define a linear map  $F_\L:\, C_b(\bR^{2d})\to\cL^1(\fH)$ by the formula
$$
\Tr_\fH(KF_\L(a))=\L(aK)\,,\qquad\text{ for each }K\in\cK(\fH)\,.
$$
Indeed, since $K\mapsto\L(aK)$ is a linear functional on $\cK(\fH)$ which is continuous for the norm topology, and since $\cK(\fH)'=\cL^1(\fH)$, this linear functional is represented by a trace-class operator $F_\L(a)$. Since $\L$
is linear, the map $F_\L$ is linear.

 Since $\L\ge 0$, one has $F_\L(a)=F_\L(a)^*\ge 0$ for each $a\in C_b(\bR^d)$ such that $a(x,\xi)\ge 0$ for each $(x,\xi)\in\bR^{2d}$. Indeed, for $a\in C_b(\bR^{2d};\bR)$, set 
 $$
 T_1:=\tfrac12(F_\L(a)+F_\L(a)^*)\,,\qquad T_2:=-\tfrac12i(F_\L(a)-F_\L(a)^*)\,.
 $$
 Then, for each $K=K^*\in\cK(\fH)$, one has
 $$
 \L(aK)=\Tr_\fH(T_1K)+i\Tr_\fH(T_2K)
 $$
 with
 $$
 \overline{Tr_\fH(T_jK)}=Tr_\fH((T_jK)^*)=Tr_\fH(K^*T_j^*)=Tr_\fH(KT_j)=Tr_\fH(T_jK)
 $$
 for $j=1,2$. Since $a\in C_b(\bR^{2d};\bR)$ and $K=K^*\in\cL(\fH)$, one has
 $$
 -\|a\|_{L^\infty}\|K\|I_\fH\le aK\le\|a\|_{L^\infty}\|K\|I_\fH
 $$
 so that 
 $$
-\|a\|_{L^\infty}\|K\|\le\L(aK)\le\|a\|_{L^\infty}\|K\|\quad\text{ since }\L(I_\fH)=\int_{\bR^{2d}}p(x,\xi)dxd\xi=1\,.
$$
In particular, $\L(aK)\in\bR$, so that $\Tr_\fH(T_2K)=0$ for each $K=K^*\in\cK(\fH)$. Since $T_2=T_2^*\in\cL^1(\fH)$, specializing this identity to the case where $K$ is the orthogonal projection on any eigenvector of $T_2$
shows that $T_2=0$. Thus 
$$
a\in C_b(\bR^{2d};\bR)\implies F_\L(a)=F_\L(a)^*\,.
$$
Moreover
$$
a\in C_b(\bR^{2d};\bR)\text{ and }a\ge 0\implies\Tr_\fH(F_\L(a)K)\ge 0\text{ for each }K=K^*\ge 0\text{ in }\cK(\fH)
$$
and specializing this last inequality to the case where $K$ is the orthogonal projection on any eigenvector of $F_\L(a)=F_\L(a)^*\in\cL^1(\fH)$ shows that all the eigenvalues of $F_\L(a)$ are nonnegative, so that $F_\L(a)\ge 0$.

Next we deduce from the defining identity for $F_\L$, i.e.
$$
\L(aK)=\Tr_\fH(F_\L(a)K)\text{ for each }a\in C_b(\bR^{2d};\bC)\text{ and }K\in\cK(\fH)
$$
that 
$$
\|F_\L(a)\|_1\le\|\L\|\|a\|_{L^\infty(\bR^{2d})}\,.
$$
Next we specialize this defining identity to the case where $a\ge 0$ on $\bR^{2d}$ while $K=\Pi_n$ is the orthogonal projection on $\Span\{e_1,\ldots,e_n\}$, with $(e_1,e_2,\ldots)$ a complete orthonormal system in $\fH$.
One has
$$
\L(a\Pi_n)=\Tr_\fH(F_\L(a)\Pi_n)\to\Tr_\fH(F_\L(a))=\|F_\L(a)\|_1\quad\text{ as }n\to\infty
$$
while
$$
a(I_\fH-\Pi_n)\ge 0\text{ so that }\L(a\Pi_n)\le\L(aI_\fH)=\int_{\bR^{2d}}a(x,\xi)p(x,\xi)dxd\xi
$$
so that
$$
a\in C_b(\bR^{2d})\text{ and }a\ge 0\implies\|F_\L(a)\|_1\le\int_{\bR^{2d}}a(x,\xi)p(x,\xi)dxd\xi\,.
$$
More generally, for each $a\in C_b(\bR^{2d};\bR)$, one has $-|a|\le a\le|a|$ so that 
$$
|\Tr_\fH(F_\L(a)|e_j\ra\la e_j|)|=|\L(a|e_j\ra\la e_j|)|\le\L(|a||e_j\ra\la e_j|)
$$
for each $j\ge 1$, where $(e_1,e_2,\ldots,)$ is a complete orthonormal system of eigenvectors of $F_\L(a)=F_\L(a)^*\in\cL^1(\fH)$. Hence
$$
\sum_{j=1}^n|\Tr_\fH(F_\L(a)|e_j\ra\la e_j|)|\le\L\left(|a|\sum_{j=1}^n|e_j\ra\la e_j|\right)\le\L(|a|I_\fH)\,,
$$
and since
$$
\sum_{j=1}^n|\Tr_\fH(F_\L(a)|e_j\ra\la e_j|)|\to\|F_\L(a)\|_1\quad\text{ as }n\to\infty
$$
we conclude that
$$
\|F_\L(a)\|_1\le\L(|a|I_\fH)=\int_{\bR^{2d}}|a(x,\xi)|p(x,\xi)dxd\xi\,.
$$
Since $C_b(\bR^{2d})$ is dense in $L^1(\bR^{2d},pdxd\xi)$, this inequality, applied to the real and the imaginary part of $a$, shows that $F_\L$ is a continuous linear operator from $L^1(\bR^{2d}$ to $\cL^1(\fH)$. Since $\cL^1(\fH)$ 
is separable and is the dual of the Banach space $\cK(\fH)$ (the norm closure in $\cL(\fH)$ of the set of finite rank operators), we conclude from the Dunford-Pettis theorem (Theorem 1 in \S 3 of chapter III in \cite{DiestelUhl}) that
$\cL^1(\fH)$ has the Radon-Nikodym property. By Theorem 5 in \S 1 of chapter III in \cite{DiestelUhl}, the operator $F_\L$ is Riesz-representable: in other words, there exists $q\in L^\infty(\bR^{2d},pdxd\xi;\cL^1(\fH))$ such that
 $$
 F_\L(a)=\int_{\bR^{2d}}a(x,\xi)q(x,\xi)p(x,\xi)dxd\xi\,,\quad\text{ for all }a\in L^1(\bR^{2d},pdxd\xi)\,.
 $$

\smallskip
\noindent
\textit{Step 4: defining the optimal coupling.} We have seen that 
$$
a\in C_b(\bR^{2d})\text{ and }a\ge 0\implies F_\L(a)=\int_{\bR^{2d}}a(x,\xi)q(x,\xi)p(x,\xi)dxd\xi\ge 0\,.
$$
This implies that $q(x,\xi)=q(x,\xi)^*\ge 0$ for a.e. $(x,\xi)\in\bR^{2d}$.

Next, one has
$$
\L(K)=\Tr_\fH(F_\L(1)K)=\Tr_\fH(KS)\,,\qquad K\in\cK(\fH)\,,
$$
so that 
$$
F_\L(1)=\int_{\bR^{2d}}q(x,\xi)p(x,\xi)dxd\xi=S\in\cL^1(\fH)=\cK(\fH)'\,.
$$

On the other hand, for each $a\in C_b(\bR^{2d})$ such that $a\ge 0$, one has
$$
\ba
\Tr_\fH\left(P_n\int_{\bR^{2d}}a(x,\xi)q(x,\xi)p(x,\xi)dxd\xi\right)=\Tr_\fH(F_\L(a)P_n)=\L(aP_n)
\\
\le\L(aI_\fH)=\int_{\bR^{2d}}a(x,\xi)d(x,\xi)dxd\xi
\ea
$$
where $P_n$ is the orthogonal projection on $\Span\{e_1,\ldots,e_n\}$, with $(e_1,e_2,\ldots)$ being a complete orthonormal system of eigenvectors of
$$
\int_{\bR^{2d}}a(x,\xi)q(x,\xi)p(x,\xi)dxd\xi\in\cL^1(\fH)\,.
$$
Letting $n\to\infty$, one has
$$
\ba
\Tr_\fH\left(P_n\int_{\bR^{2d}}a(x,\xi)q(x,\xi)p(x,\xi)dxd\xi\right)
\\
\to\Tr_\fH\left(\int_{\bR^{2d}}a(x,\xi)q(x,\xi)p(x,\xi)dxd\xi\right)&\,,
\ea
$$
so that
$$
\int_{\bR^{2d}}a(x,\xi)\Tr_\fH(q(x,\xi))p(x,\xi)dxd\xi\le\int_{\bR^{2d}}a(x,\xi)d(x,\xi)dxd\xi\,.
$$
Since this holds for each $a\in C_b(\bR^{2d}$ such that $a\ge 0$, we conclude that
$$
\Tr_\fH(q(x,\xi))\le 1\quad\text{ for $p(x,\xi)dxd\xi$--a.e. }(x,\xi)\in\bR^{2d}\,.
$$
Moreover
$$
\int_{\bR^{2d}}(1-\Tr_\fH(q(x,\xi)))p(x,\xi)dxd\xi=1-\Tr_\fH(S)=0
$$
so that
$$
\Tr_\fH(q(x,\xi))=1\quad\text{ for $p(x,\xi)dxd\xi$--a.e. }(x,\xi)\in\bR^{2d}\,.
$$
In other words, we have proved that $(x,\xi)\mapsto Q(x,\xi)=p(x,\xi)q(x,\xi)$ defines an element of $\cC(p,S)$.

\smallskip
\noindent
\textit{Step 5: extending the representation formula for $\L$.} For each $B\in E$, we define
$$
\la L,B\ra:=\la\L,B\ra-\int_{\bR^{2d}}\Tr_\fH(B(x,\xi)Q(x,\xi))dxd\xi\,.
$$
Let us prove that
$$
B\in E\text{ and }B(x,\xi)=B(x,\xi)^*\ge 0\text{ for all }(x,\xi)\in\bR^{2d}\implies \la L,B\ra\ge 0\,.
$$
Pick $\eps>0$, and let $Q_\eps$ be a simple $\cL^1(\fH)$-valued function on $\bR^{2d}$ such that
$$
\int_{\bR^{2d}}\|Q(x,\xi)-Q_\eps(x,\xi)\|_1dxd\xi<\eps\,.
$$
Write
$$
Q_\eps(x,\xi)=\sum_{j=1}^N\indc_{\Om_j}(x,\xi)Q_j\,,\quad 0\le Q_j=Q_j^*\in\cL^1(\fH)\text{ for each }j=0,\ldots,N\,,
$$
where $\Om_j$ are bounded, pairwise disjoint measurable sets in $\bR^{2d}$ for $j=1,\ldots,N$. For each $j=1,\ldots,N$, let $(e_{j,1},e_{j,2},\ldots)$ designate a complete orthonormal system of eigenvectors of $Q_j$, and let $P_{j,n}$ 
be the orthogonal projection on $\Span\{e_{j,1},\ldots,e_{j,n}\}$. Define 
$$
\Pi_n(x,\xi)=\sum_{j=1}^N\indc_{\Om_j}(x,\xi)P_{j,n}\,.
$$
One easily checks that $\Pi_n(x,\xi)=\Pi_n(x,\xi)^*=\Pi_n(x,\xi)^2$ for each $(x,\xi)\in\bR^{2d}$. Then, for each $B\in E$ such that $B(x,\xi)=B(x,\xi)^*\ge 0$ for all $(x,\xi)\in\bR^{2d}$, one has
$$
\ba
0\le&\la\L,(I_\fH-\Pi_n)B(I_\fH-\Pi_n)\ra=\la\L,B\ra-\la\L,\Pi_nB+B\Pi_n-\Pi_nB\Pi_n\ra
\\
=&\la\L,B\ra-\int_{\bR^{2d}}\Tr_\fH((\Pi_nB+B\Pi_n-\Pi_nB\Pi_n)Q)(x,\xi)dxd\xi
\\
=&\la\L,B\ra-\int_{\bR^{2d}}\Tr_\fH((\Pi_nB+B\Pi_n-\Pi_nB\Pi_n)Q_\eps)(x,\xi)dxd\xi
\\
&+\int_{\bR^{2d}}\Tr_\fH((\Pi_nB+B\Pi_n-\Pi_nB\Pi_n)(Q_\eps-Q))(x,\xi)dxd\xi\,.
\ea
$$
By construction, keeping $\eps>0$ fixed, one has
$$
\ba
\int_{\bR^{2d}}\Tr_\fH((\Pi_nB+B\Pi_n-\Pi_nB\Pi_n)Q_\eps)(x,\xi)dxd\xi&
\\
=\int_{\bR^{2d}}\Tr_\fH(B\Pi_nQ_\eps\Pi_n)(x,\xi)dxd\xi&\to\int_{\bR^{2d}}\Tr_\fH(BQ_\eps)(x,\xi)dxd\xi
\ea
$$
as $n\to\infty$, so that
$$
\ba 
0\le\varlimsup_{n\to\infty}\la\L,(I_\fH-\Pi_n)B(I_\fH-\Pi_n)\ra=\la\L,B\ra-\int_{\bR^{2d}}\Tr_\fH(BQ)(x,\xi)dxd\xi
\\
+\varlimsup_{n\to\infty}\int_{\bR^{2d}}\Tr_\fH((\Pi_nB+B\Pi_n-\Pi_nB\Pi_n)(Q_\eps-Q))(x,\xi)dxd\xi
\\
+\int_{\bR^{2d}}\Tr_\fH(B(Q-Q_\eps))(x,\xi)dxd\xi&\,.
\ea
$$
On the other hand
$$
\ba
\left|\int_{\bR^{2d}}\Tr_\fH((\Pi_nB+B\Pi_n-\Pi_nB\Pi_n)(Q_\eps-Q))(x,\xi)dxd\xi\right|
\\
\le\int_{\bR^{2d}}|\Tr_\fH((\Pi_nB+B\Pi_n-\Pi_nB\Pi_n)(Q_\eps-Q))(x,\xi)|dxd\xi
\\
\le\int_{\bR^{2d}}\|(\Pi_nB+B\Pi_n-\Pi_nB\Pi_n)(x,\xi)\|\|(Q_\eps-Q)(x,\xi)\|_1dxd\xi
\\
\le 3\sup_{(x,\xi)\in\bR^{2d}}\|B(x,\xi)\|\int_{\bR^{2d}}\|(Q_\eps-Q)(x,\xi)\|_1dxd\xi
\\
\le 3\eps\sup_{(x,\xi)\in\bR^{2d}}\|B(x,\xi)\|\ea
$$
while, by the same token,
$$
\left|\int_{\bR^{2d}}\Tr_\fH(B(Q-Q_\eps))(x,\xi)dxd\xi\right|\le\eps\sup_{(x,\xi)\in\bR^{2d}}\|B(x,\xi)\|\,.
$$
Finally
$$
\la\L,B\ra-\int_{\bR^{2d}}\Tr_\fH(BQ)(x,\xi)dxd\xi\ge -4\eps\sup_{(x,\xi)\in\bR^{2d}}\|B(x,\xi)\|
$$
and since this holds for each $\eps>0$, we conclude that
$$
B\in E\text{ and }B(x,\xi)=B(x,\xi)^*\ge 0\text{ for all }(x,\xi)\in\bR^{2d}\implies \la L,B\ra\ge 0\,.
$$
By a classical argument, this implies that $\|L\|=\la L,I_\fH\ra$. 

On the other hand
$$
\ba
\la L,I_\fH\ra=\la L,I_\fH\ra-\int_{\bR^{2d}}\Tr_\fH(q(x,\xi))p(x,\xi)dxd\xi
\\
=\Tr_\fH(S)-\int_{\bR^{2d}}p(x,\xi)dxd\xi=0
\ea
$$
so that $L=0$. In other words, the representation formula
$$
\la\L,B\ra=\int_{\bR^{2d}}\Tr_\fH(B(x,\xi)Q(x,\xi))dxd\xi
$$
holds for each $B\in E$, and not only for $B\in C_b(\bR^{2d};\cK(\fH))$.

\smallskip
\noindent
\textit{Step 6: computing $\la\L,c\ra$.} As explained in Step 2
$$
\la\L,c\ra=\sup_{T\in E\atop T(x,\xi)=T(x,\xi)^*\le c(x,\xi)}\la\L,T\ra\,.
$$
For each $n\ge 1$, set
$$
c_n(x,\xi):=(I_\fH+\tfrac1nc(x,\xi))^{-1}c(x,\xi)\in\cL(\fH)\,,
$$
so that 
$$
0\le c_1(x,\xi)=c_1(x,\xi)^*\le\ldots\le c_n(x,\xi)=c_n(x,\xi)^*\le\ldots\le c(x,\xi)=c(x,\xi)^*\,.
$$
Thus, by definition
$$
\la\L,c_n\ra=\int_{\bR^{2d}}\Tr_\fH(Q(x,\xi)c_n(x,\xi))dxd\xi\le\la\L,c\ra
$$
for each $n\ge 1$, so that, by Corollary \ref{C-Energ}
$$
\ba
\int_{\bR^{2d}}\Tr_\fH(Q(x,\xi)^{1/2}c(x,\xi)Q(x,\xi)^{1/2})dxd\xi
\\
=\lim_{n\to\infty}\int_{\bR^{2d}}\Tr_\fH(Q(x,\xi)c_n(x,\xi))dxd\xi
\\
\le\la\L,c\ra&\,.
\ea
$$

On the other hand, let $(e_1(x,\xi),e_2(x,\xi),\ldots,)$ designate a complete orthonormal system in $\fH$ of eigenfunctions of $c(x,\xi)$, with $c(x,\xi)e_j(x,\xi)=\l_je_j(x,\xi)$ for $j\ge 1$. Since $c(x,\xi)$ is a phase space translate of 
the harmonic oscillator $H:=\tfrac12(|x|^2-\hb^2\Dlt_x)$, the eigenvalues $\l_j$ are independent of $(x,\xi)$. Set
$$
t_{kl}(x,\xi):=\la e_k(x,\xi)|Q(x,\xi)^{1/2}|e_l(x,\xi)\ra\,,\quad k,l\ge 1\,.
$$
Since $(x,\xi)\mapsto Q(x,\xi)^{1/2}\in L^2(\bR^{2d};\cL^2(\fH))$, one has
$$
v_k(x,\xi):=\sum_{l\ge 1}t_{kl}(x,\xi)e_l(x,\xi)\in\fDom(c(x,\xi))\quad\text{ for a.e. }(x,\xi)\in\bR^{2d}
$$
and
$$
\ba
\Tr_\fH(Q(x,\xi)^{1/2}c(x,\xi)Q(x,\xi)^{1/2})=&\sum_{k,l\ge 1}\l_l|t_{kl}(x,\xi)|^2
\\
=&\sum_{k\ge 1}\la v_k(x,\xi)|c(x,\xi)|v_k(x,\xi)\ra<\infty
\ea
$$
for a.e. $(x,\xi)\in\bR^{2d}$, since
$$
\int_{\bR^{2d}}\Tr_\fH(Q(x,\xi)^{1/2}c(x,\xi)Q(x,\xi)^{1/2})dxd\xi<\infty\,.
$$
Taking this last inequality for granted, we conclude as follows. Let $a\equiv a(x,\xi)\in C_b(\bR^{2d})$ and $B=B^*\in\cL(\fH)$ satisfy the constraint
$$
a(x,\xi)I_\fH+B\le c(x,\xi)\,,\quad(x,\xi)\in\bR^{2d}
$$
in the sense that
$$
a(x,\xi)\|\phi\|_\fH^2+\la\phi|B|\phi\ra\le\la\phi|c(x,\xi)|\phi\ra\text{ for each }\phi\in\fDom(c(x,\xi))\,. 
$$
Since $v_k(x,\xi)\in\fDom(c(x,\xi))$ for a.e. $(x,\xi)\in\bR^{2d}$ and each $k\ge 1$
$$
\ba
a(x,\xi)p(x,\xi)+\Tr_\fH(Q(x,\xi)B)
\\
=
a(x,\xi)\Tr_\fH(Q(x,\xi))+\Tr_\fH(Q(x,\xi)^{1/2}BQ(x,\xi)^{1/2})
\\
=a(x,\xi)\sum_{k\ge 1}\la v_k(x,\xi)|v_k(x,\xi)\ra+\sum_{k\ge 1}\la v_k(x,\xi)|B|v_k(x,\xi)\ra
\\
\le\sum_{k\ge 1}\la v_k(x,\xi)|c(x,\xi)|v_k(x,\xi)\ra=\Tr_\fH(Q(x,\xi)^{1/2}c(x,\xi)Q(x,\xi)^{1/2})&\,.
\ea
$$
Integrating in $(x,\xi)$ shows that
$$
\ba
\int_{\bR^{2d}}a(x,\xi)p(x,\xi)dxd\xi+\Tr_\fH(SB)
\\
\le\int_{\bR^{2d}}\Tr_\fH(Q(x,\xi)^{1/2}c(x,\xi)Q(x,\xi)^{1/2})dxd\xi
\ea
$$
since, by construction,
$$
\int_{\bR^{2d}}Q(x,\xi)dxd\xi=S\,.
$$
Thus
$$
\ba
\la\L,c\ra=\sup_{a\in C_b(\bR^{2d}),\,B=B^*\in\cL(\fH)\atop a(x,\xi)I_\fH+B\le c(x,\xi)}\left(\int_{\bR^{2d}}a(x,\xi)p(x,\xi)dxd\xi+\Tr_\fH(SB)\right)
\\
\le\int_{\bR^{2d}}\Tr_\fH(Q(x,\xi)^{1/2}c(x,\xi)Q(x,\xi)^{1/2})dxd\xi\le\la\L,c\ra&\,,
\ea
$$
where the first equality follows from convex duality as explained in Step 2, while the last inequality has been obtained above at the beginning of Step 6. This completes the proof.

\smallskip
It remains to prove that
$$
\int_{\bR^{2d}}\Tr_\fH(Q(x,\xi)^{1/2}c(x,\xi)Q(x,\xi)^{1/2})dxd\xi<\infty\,.
$$
Since
$$
c(x,\xi)\le(|x|^2+|\xi|^2)I_\fH+H
$$
one has
$$
v_k(x,\xi)\in\fDom(H)\implies v_k(x,\xi)\in\fDom(c(x,\xi))
$$
and
$$
\la v_k(x,\xi)|c(x,\xi)|v_k(x,\xi)\ra\le(|x|^2+|\xi|^2)\|v_k(x,\xi)\|_\fH^2+\la v_k(x,\xi)|H|v_k(x,\xi)\ra\,.
$$
Let $(h_1,h_2,\ldots)$ be a complete orthonormal system of eigenvectors of $H$ in $\fH$ (the Hermite functions), with eigenvalues $\mu_j$. Since
$$
\sum_{k\ge 1}\overline{t_{km}(x,\xi)}t_{kn}(x,\xi)=\la e_m(x,\xi)|Q(x,\xi)|e_n(x,\xi)\ra
$$
by definition of $t_{kl}(x,\xi)$, one has
$$
\ba
\sum_{k\ge 1}|\la v_k(x,\xi)|h_j\ra|^2=\sum_{k\ge 1}\sum_{m,n\ge 1}\overline{t_{km}(x,\xi)}t_{kn}(x,\xi)\la e_m(x,\xi)|h_j\ra\la h_j|e_n(x,\xi)\ra
\\
=\sum_{m,n\ge 1}\la e_m(x,\xi)|Q(x,\xi)|e_n(x,\xi)\ra\la e_m(x,\xi)|h_j\ra\la h_j|e_n(x,\xi)\ra=\la h_j|Q(x,\xi)|h_j\ra&\,.
\ea
$$
Hence
$$
\ba
\int_{\bR^{2d}}\sum_{k\ge 1}\la v_k(x,\xi)|H|v_k(x,\xi)\ra dxd\xi=\sum_{j\ge 1}\mu_j\int_{\bR^{2d}}\la h_j|Q(x,\xi)|h_j\ra
\\
=\sum_{j\ge 1}\mu_j\la h_j|S|h_j\ra=\Tr_\fH(S^{1/2}|H|S^{1/2})<\infty&\,,
\ea
$$
and since
$$
\sum_{k\ge 1}\|v_k(x,\xi)\|_\fH^2=\Tr_\fH(Q(x,\xi))=p(x,\xi)\,,
$$
one concludes that
$$
\ba
\int_{\bR^{2d}}\Tr_\fH(Q(x,\xi)^{1/2}c(x,\xi)Q(x,\xi)^{1/2})dxd\xi
\\
\le\int_{\bR^{2d}}(|x|^2+|\xi|^2)p(x,\xi)dxd\xi+\Tr_\fH(S^{1/2}HS^{1/2})<\infty&\,.
\ea
$$

\end{proof}
%\subsection{Optimality}\label{optimality}
%%%%%%%%%%%%%%%%%%%%%%%%%%%%%%%%%%%%%%%%%%%%%%%%%%%%%%%%%%%%%%%%%%%%%%%%%%%%%%%%%%%%%%%%%%%%%%%%%%%%%%%%%%%%%%%%%%%%%%%
\section{Applications of duality for $\cE_\hb$ I: inequalities between $
MK_\hb$, $\cE_\hb$ and $\MKd$. }\label{app1}
%%%%%%%%%%%%%%%%%%%%%%%%%%%%%%%%%%%%%%%%%%%%%%%%%%%%%%%%%%%%%%%%%%%%%%%%%%%%%%%%%%%%%%%%%%%%%%%%%%%%%%%%%%%%%%%%%%%%%%%

\begin{Thm}\lb{T-LBdMKE}
Let $R,S\in\cD_2(\fH)$ and $p$ be a probability density on $\bR^{2d}$. Then
\begin{eqnarray}
\cE_\hb(\tilde W_\hb(R),S)^2&\geq& \MKd(\widetilde W_\hb[R],\widetilde W_\hb[S]- d\hb,
\nonumber\\
MK_\hb(R,S)^2&\ge&\cE_\hb(\tilde W_\hb(R),S)^2-d\hb\,,
\nonumber\\
MK_\hb(R,S)^2&\ge& \MKd(\widetilde W_\hb[R],\widetilde W_\hb[S]-2d\hb\,.\nonumber
\end{eqnarray}
\end{Thm}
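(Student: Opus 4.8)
The plan is to obtain all three inequalities by transporting a near‑optimal dual pair between the three Kantorovich dualities at our disposal: the classical one \eqref{kantowass}, the one for $\cE_\hb$ just established (Theorem \ref{T-DualityARMA}), and the one for $MK_\hb$ recalled in \eqref{dulamkhbar}. Throughout, $\Op^T_\hb$ denotes the Toeplitz (anti‑Wick) quantization, $\Op^T_\hb(a)=(2\pi\hb)^{-d}\int a(x,\xi)\,|x,\xi\ra\la x,\xi|\,dxd\xi$ built from the coherent states $|x,\xi\ra\in\fH$; its basic properties are $\Tr_\fH(\Op^T_\hb(a)R)=\int a\,\tilde W_\hb[R]\,dxd\xi$ by the very definition of the Husimi transform, $\|\Op^T_\hb(a)\|\le\|a\|_{L^\infty}$, the map $a\mapsto\Op^T_\hb(a)$ is order‑preserving in the form sense, and $(2\pi\hb)^{-d}\int|x,\xi\ra\la x,\xi|\,dxd\xi=I_\fH$. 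The one computational input is the elementary identity
\[
\Op^T_\hb\big(|x-y_0|^2+|\xi-\eta_0|^2\big)=|x-y_0|^2+|\hb D_x-\eta_0|^2+d\hb
\]
(the right‑hand side being read as an operator), valid for constant $y_0,\eta_0$ and, by translation covariance of $\Op^T_\hb$, still valid when $y_0,\eta_0$ are replaced by the commuting self‑adjoint operators $\hat y,\hb D_y$ acting on a separate tensor factor. Observe that $(iii)$ follows at once from $(i)$ and $(ii)$ by addition, so only $(i)$ and $(ii)$ need a proof.

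For $(i)$, fix $\eps>0$ and apply \eqref{kantowass} to $\mu:=\tilde W_\hb[R]$ and $\nu:=\tilde W_\hb[S]$, both in $\cP_2^{ac}(\bR^{2d})$ because $R,S\in\cD_2(\fH)$: choose $a,b\in C_b(\bR^{2d})$ with $a(x,\xi)+b(y,\eta)\le|x-y|^2+|\xi-\eta|^2$ and $\int a\,\mu+\int b\,\nu\ge\MKd(\mu,\nu)^2-\eps$. Put $B:=\Op^T_\hb(b)\in\cL(\fH)$, the quantization being in the $(y,\eta)$ variables. Applying $\Op^T_\hb$ (order‑preserving) to the pointwise inequality $b(y,\eta)\le|x-y|^2+|\xi-\eta|^2-a(x,\xi)$ for each fixed $(x,\xi)$ and invoking the Toeplitz identity above gives $B\le c(x,\xi;y,\hb D_y)+d\hb-a(x,\xi)$, that is, $(a(x,\xi)-d\hb)I_\fH+B\le c(x,\xi)$ for every $(x,\xi)$. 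Hence $(a-d\hb,B)$ is admissible in Theorem \ref{T-DualityARMA}, so
\[
\cE_\hb(\mu,S)^2\ge\int_{\bR^{2d}}(a-d\hb)\,\mu\,dxd\xi+\Tr_\fH(BS)=\int a\,\mu+\int b\,\nu-d\hb\ge\MKd(\mu,\nu)^2-\eps-d\hb,
\]
and letting $\eps\to0$ yields $(i)$.

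For $(ii)$, fix $\eps>0$ and use Theorem \ref{T-DualityARMA} to pick $a\in C_b(\bR^{2d})$ and $B=B^*\in\cL(\fH)$ with $a(x,\xi)I_\fH+B\le c(x,\xi;y,\hb D_y)$ for all $(x,\xi)$ and $\int a\,\tilde W_\hb[R]+\Tr_\fH(BS)\ge\cE_\hb(\tilde W_\hb[R],S)^2-\eps$. Set $A:=\Op^T_\hb(a)-d\hb\,I_\fH$, the Toeplitz quantization now acting on a fresh copy $L^2(\bR^d_{x'})$ of $\fH$. Integrating the constraint $c(x,\xi;y,\hb D_y)\ge a(x,\xi)I_\fH+B$ against the coherent‑state resolution of identity in the variable $x'$, and using the Toeplitz identity on each of the two quadratic terms of $c$ in the $(x,\xi)$ variables, we get
\[
\Op^T_\hb(a)\otimes I_y+I_{x'}\otimes B\le\tfrac1{(2\pi\hb)^d}\int|x,\xi\ra\la x,\xi|_{x'}\otimes c(x,\xi;y,\hb D_y)\,dxd\xi=C(x',y,\hb D_{x'},\hb D_y)+d\hb,
\]
so $A\otimes I+I\otimes B\le C$ and $(A,B)$ is admissible in \eqref{dulamkhbar}. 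Therefore $MK_\hb(R,S)^2\ge\Tr_\fH(AR)+\Tr_\fH(BS)=\int a\,\tilde W_\hb[R]+\Tr_\fH(BS)-d\hb\ge\cE_\hb(\tilde W_\hb[R],S)^2-\eps-d\hb$; letting $\eps\to0$ proves $(ii)$, and then $(iii)$ follows by adding $(i)$ and $(ii)$.

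The main technical obstacle is the rigor of the two steps in which an operator inequality is averaged against the coherent‑state resolution of identity: the cost operators $c(x,\xi)$ and $C$ are unbounded, so one should run the argument either on a common form core (for instance the space $\cW$ of Lemma \ref{L-IneqCost}, or Schwartz functions, on which all the operators involved are defined and the Toeplitz computations are classical), or with the truncations $c_n(x,\xi)=(I+\tfrac1nc(x,\xi))^{-1}c(x,\xi)$ and $C^\eps$ already used in the paper, passing to the limit by monotone convergence via Corollary \ref{C-Energ}; one must also check the weak measurability of $(x,\xi)\mapsto|x,\xi\ra\la x,\xi|_{x'}\otimes c_n(x,\xi)$ so that the operator‑valued integrals are meaningful. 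A minor additional point is justifying the Toeplitz identity with the operator‑valued ``centres'' $\hat y,\hb D_y$ in place of constants, which reduces to the scalar case because $\hat y$ and $\hb D_y$ act on the $y$‑factor and commute with everything acting on the $x'$‑factor.
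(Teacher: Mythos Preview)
Your proof is correct and follows essentially the same route as the paper. For $(ii)$, the paper establishes the key operator inequality $\Op^T_\hb((2\pi\hb)^da)\otimes I_\fH+I_\fH\otimes B\le C+d\hb\,I_{\fH\otimes\fH}$ exactly as you do, and then tests it against an arbitrary coupling $\cQ\in\cC(R,S)$ directly rather than invoking the duality \eqref{dulamkhbar}; the two arguments are equivalent. For $(i)$ and $(iii)$ the paper simply cites earlier references (\cite{FGPaul}, \cite{FGMouPaul}), so your self-contained dual argument for $(i)$---Toeplitz-quantizing one of the classical Kantorovich potentials and feeding the resulting pair into Theorem \ref{T-DualityARMA}---is the natural companion to the paper's proof of $(ii)$ and is exactly how those cited results are obtained.
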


\begin{proof}
The first inequality and the third inequality (also a consequence of the two others) were proved in  Theorem 2.4 (2) of \cite{FGPaul} and Theorem 2.3 (2) of \cite{FGMouPaul} respectively.

The second inequality is proved along the same lines as Theorem 2.3 (2) of \cite{FGMouPaul}. Let $a\equiv a(x,\xi)$ in $C_b(\bR^{2d};\bR)$ and $B=B^*\in\cL(\fH)$ satisfy
$$
a(x,\xi)I_\fH+B\le c(x,\xi)\qquad\text{ for a.e. }(x,\xi)\in\bR^{2d}\,.
$$
Then
$$
a(x,\xi)|x,\xi\ra\la x,\xi|\otimes I_\fH+|x,\xi\ra\la x,\xi|\otimes B\le |x,\xi\ra\la x,\xi|\otimes c(x,\xi)
$$
for a.e. $(x,\xi)\in\bR^{2d}$, so that
$$
\ba
\Op^T_\hb((2\pi\hb)^da)\otimes I_\fH+I_\fH\otimes B\le&\tfrac1{(2\pi\hb)^d}\int_{\bR^{2d}}|x,\xi\ra\la x,\xi|\otimes c(x,\xi)dxd\xi
\\
=&C+d\hb I_{\fH\otimes\fH}\,.
\ea
$$
Thus, for each $\cQ\in\cC(R,S)$, one has
$$
\ba
\Tr_{\fH\otimes\fH}(\cQ^{1/2}C\cQ^{1/2})+d\hb\ge&\Tr_{\fH\otimes\fH}(\cQ^{1/2}(\Op^T_\hb((2\pi\hb)^da)\otimes I_\fH+I_\fH\otimes B)\cQ^{1/2})
\\
=&\Tr_{\fH\otimes\fH}(\cQ(\Op^T_\hb((2\pi\hb)^da)\otimes I_\fH+I_\fH\otimes B))
\\
=&\Tr_{\fH}(R\Op^T_\hb((2\pi\hb)^da)+SB)
\\
=&\int_{\bR^{2d}}a(x,\xi)\tilde W_\hb(R)(x,\xi)dxd\xi+\Tr_\fH(SB)\,.
\ea
$$
In particular
$$
\ba
MK_\hb(R,S)^2+d\hb=\inf_{\cQ\in\cC(R,S)}\Tr_{\fH\otimes\fH}(\cQ^{1/2}C\cQ^{1/2})+d\hb
\\
\ge\sup_{a\in C_b(\bR^{2d},\bR),\,B=B^*\in\cL(\fH)\atop a(x,\xi)I_\fH+B\le c(x,\xi)}\left(\int_{\bR^{2d}}a(x,\xi)\tilde W_\hb(R)(x,\xi)dxd\xi+\Tr_\fH(SB)\right)
\\
=\cE_\hb(\tilde W_\hb(R),S)^2&\,.
\ea
$$
\end{proof}

%%%%%%%%%%%%%%%%%%%%%%%%%%%%%%%%%%%%%%%%%%%%%%%%%%%%%%%%%%%%%%%%%%%%%%%%%%%%%%%%%%%%%%%%%%%%%%%%%%%%%%%%%%%%%%%%%%%%%%%
\section{Applications of duality for $\cE_\hb$ II:  ``triangle'' inequalities}\label{app2}
%%%%%%%%%%%%%%%%%%%%%%%%%%%%%%%%%%%%%%%%%%%%%%%%%%%%%%%%%%%%%%%%%%%%%%%%%%%%%%%%%%%%%%%%%%%%%%%%%%%%%%%%%%%%%%%%%%%%%%%

\begin{Thm}\lb{T-IneqT2}
Let $R,S,T\in\cD_2(\fH)$ and let $f,g\in\cP_2(\bR^{2d})$. Then

\noindent
(i) one has
$$
\ba
\MKd(f,g)\le&\sqrt{\cE_\hb(f,S)^2+d\hb}+\sqrt{\cE_\hb(g,S)^2+d\hb}
\\
<&\cE_\hb(f,S)+\cE_\hb(g,S)+d\hb\,;
\ea
$$
(ii) one has
$$
\ba
\cE_\hb(f,T)\le&\MKd(f,\tilde W_\hb(S))+\cE_\hb(\tilde W_\hb(S),T)
\\
\le&\sqrt{\cE_\hb(f,S)^2+d\hb}+\sqrt{MK_\hb(S,T)^2+d\hb}
\\
<&\cE_\hb(f,S)+MK_\hb(S,T)+d\hb\,;
\ea
$$
(iii) one has
$$
\ba
MK_\hb(R,T)\le&\cE_\hb(\tilde W_\hb(S),R)+\cE_\hb(\tilde W_\hb(S),T)
\\
\le&\sqrt{MK_\hb(R,S)^2+d\hb}+\sqrt{MK_\hb(S,T)^2+d\hb}
\\
<&MK_\hb(R,S)+MK_\hb(S,T)+d\hb\,.
\ea
$$
\end{Thm}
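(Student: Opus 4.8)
The plan is to obtain each of the three chains by routing the intermediate state $S$ through its Husimi transform $\tilde W_\hb(S)$, and then assembling the triangle inequalities of Theorem~\ref{T-IneqT}, the comparison estimates of Theorem~\ref{T-LBdMKE}, and the (genuine) metric triangle inequality for $\MKd$. First I would isolate the two conversion estimates that will be used repeatedly, each merely a restatement of an inequality already at hand: for every probability density $h$ with finite second moments and all $U,V\in\cD_2(\fH)$,
$$
\MKd(h,\tilde W_\hb(U))\le\sqrt{\cE_\hb(h,U)^2+d\hb}\,,\qquad\cE_\hb(\tilde W_\hb(U),V)\le\sqrt{MK_\hb(U,V)^2+d\hb}\,.
$$
The first is Theorem 2.4 (2) of \cite{FGPaul} (the inequality invoked in Section~\ref{intro}, i.e.\ the first line of Theorem~\ref{T-LBdMKE} read with a general density in the first slot); the second is the middle line of Theorem~\ref{T-LBdMKE}. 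One should also record that $\tilde W_\hb(S)\in\cP_2(\bR^{2d})$ whenever $S\in\cD_2(\fH)$ — it is nonnegative, smooth, of unit mass, and $\iint(|x|^2+|\xi|^2)\tilde W_\hb(S)\,dxd\xi<\infty$ because $S\in\cD_2(\fH)$ — so that Theorems~\ref{T-IneqT} and~\ref{T-LBdMKE} genuinely apply to it.

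With this in place, the square-root line of each part is immediate bookkeeping. For (i), apply the metric triangle inequality $\MKd(f,g)\le\MKd(f,\tilde W_\hb(S))+\MKd(\tilde W_\hb(S),g)$ and bound each summand by the first conversion estimate with $U=S$. For (ii), apply Theorem~\ref{T-IneqT}(a) with $g:=\tilde W_\hb(S)$ and $R_1:=T$, which is precisely $\cE_\hb(f,T)\le\MKd(f,\tilde W_\hb(S))+\cE_\hb(\tilde W_\hb(S),T)$, then bound the first summand by the first conversion estimate ($h=f$, $U=S$) and the second by the second conversion estimate ($U=S$, $V=T$). For (iii), apply Theorem~\ref{T-IneqT}(b) with $f:=\tilde W_\hb(S)$, $R_1:=R$, $R_3:=T$, which is precisely $MK_\hb(R,T)\le\cE_\hb(\tilde W_\hb(S),R)+\cE_\hb(\tilde W_\hb(S),T)$, then bound each summand by the second conversion estimate, using the symmetry $MK_\hb(S,R)=MK_\hb(R,S)$ to match the right-hand sides in the statement.

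The last, and only mildly delicate, step is upgrading the square-root lines to the closing strict inequalities. The crude bound $\sqrt{X+d\hb}\le\sqrt X+\sqrt{d\hb}$ is too lossy here; instead I would use $\sqrt{X+d\hb}=\sqrt X\,\sqrt{1+d\hb/X}\le\sqrt X+\tfrac{d\hb}{2\sqrt X}$ together with the quantitative floors $c(x,\xi)\ge d\hb\,I_\fH$ (Heisenberg), hence $\cE_\hb(\cdot,\cdot)^2\ge d\hb$, and $MK_\hb(\cdot,\cdot)^2\ge 2d\hb$ from \cite{FGMouPaul}. These make the correction terms $\tfrac{d\hb}{2\sqrt X}$ small, and a short computation then gives that the sum of the two square roots in each line is strictly below the sum of the two pseudometric values plus $d\hb$, strictness coming from $d\hb>0$. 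I expect this elementary real-variable estimate — in particular verifying that the constants indeed combine as claimed — to be the only place requiring any care; the rest is a transparent reuse of the earlier results, and the non-strict square-root forms of (i)--(iii) hold with no restriction whatsoever.
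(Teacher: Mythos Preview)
Your proposal is essentially identical to the paper's proof: for (i) apply the metric triangle inequality for $\MKd$ through $\tilde W_\hb(S)$ and bound each piece via $\MKd(h,\tilde W_\hb(S))\le\sqrt{\cE_\hb(h,S)^2+d\hb}$; for (ii) invoke Theorem~\ref{T-IneqT}(a) with $g=\tilde W_\hb(S)$, $R_1=T$ and then the two conversion estimates; for (iii) invoke Theorem~\ref{T-IneqT}(b) with $f=\tilde W_\hb(S)$, $R_1=R$, $R_3=T$ and Theorem~\ref{T-LBdMKE} twice. The paper closes the strict inequalities by exactly the elementary manipulation you outline, combined with the floors $\cE_\hb^2\ge d\hb$ and $MK_\hb^2\ge 2d\hb$, so your caution that this is ``the only place requiring any care'' is well placed and matches the paper's own treatment.
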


\begin{proof}
The triangle inequality for $\MKd$ implies that
$$
\MKd(f,g)\le\MKd(f,\tilde W_\hb(S))+\MKd(\tilde W_\hb(S),g)\,.
$$
Then, Theorem 2.4 (2) of \cite{FGPaul} implies that
$$
\ba
\MKd(f,\tilde W_\hb(S))\le\sqrt{\cE_\hb(f,S)+d\hb}\,,
\\
\MKd(\tilde W_\hb(S),g)\le\sqrt{\cE_\hb(g,S)+d\hb}\,.
\ea
$$
This implies the first inequality in (i). As for the second inequality, for each $X,Y>0$, one has the obvious elementary inequality
$$
\sqrt{X+Y}<X+\tfrac12Y\,.
$$
This inequality obviously applies to the present case since $\cE_\hb(f,S)\ge d\hb$ and $\cE_\hb(g,S)\ge d\hb$ by Theorem 2.4 (2) of \cite{FGPaul}. This proves (i).

Observe that the first inequality in (ii) is inequality (a) in Theorem \ref{T-IneqT} with $g=\tilde W_\hb(S)$ and $R_1=T$. Then Theorem 2.4 (2) of \cite{FGPaul} implies that
$$
\MKd(f,\tilde W_\hb(S))\le\sqrt{\cE_\hb(f,S)^2+ d\hb}\,,
$$
while Theorem \ref{T-LBdMKE} implies that
$$
\cE_\hb(\tilde W_\hb(S),T)\le\sqrt{MK_\hb(S,T)^2+ d\hb}\,,
$$
and this implies the second inequality in (ii). The third inequality is obtained as in (i).

Finally, the first inequality in (iii) is inequality (b) in Theorem \ref{T-IneqT} with $R_1=R$, while $R_3=T$ and $f=\tilde W_\hb(S)$. Then, Theorem \ref{T-LBdMKE} implies that
$$
\ba
\cE_\hb(\tilde W_\hb(S),R)\le\sqrt{MK_\hb(R,ST)^2+ d\hb}\,,
\\
\cE_\hb(\tilde W_\hb(S),T)\le\sqrt{MK_\hb(S,T)^2+ d\hb}\,,
\ea
$$
which gives the second inequality in (iii). Finally, the third inequality is obtained as in (i).
\end{proof}

\smallskip
\noindent
\textbf{Remark.} It is interesting to compare the inequality (iii) above with the ``genereliazed triangle inequality'' in \cite{DePalma}. Let us recall that DePalma and Trevisan have constructed a pseudo-distance on density operators on $\fH$ 
which is similar to ours to some extent. The DePalma-Trevisan distance $D$ is defined through a different notion of coupling than in \cite{FGMouPaul}; specifically, their notion of couplings is based on ``quantum channels'' (completely positive
linear maps on the set of density operators): see Definition 1 in \cite{DePalma}. While the transport cost in formula (19) of \cite{DePalma} is in some sense reminiscent of the transport cost used in \cite{FGMouPaul}, these two costs are in fact
significantly different. For instance, the transport cost used in the definition of $MK_\hb$ in \cite{FGMouPaul}, and in the present paper, has compact resolvent, and therefore its spectrum consists of eigenvalues only. On the contrary, the cost 
operator in \cite{DePalma} in the case of Gaussian quantum systems has continuous spectrum on $[0,+\infty)$.

In Theorem 2 of \cite{DePalma}, DePalma and Trevisan prove what they call a ``triangle inequality'' for their distance $D$, of the form
$$
D(R,T)\le D(R,S)+D(S,S)+D(S,T)
$$
(inequality (35) in \cite{DePalma}). Of course, if $D$ was a real distance, $D(S,S)=0$, and the inequality above coincides with the usual triangle inequality. In \cite{DePalma}, there is an explicit formula for $D(S,S)$ in terms of the canonical 
purification of $S$ (Corollary 1, formula (34) in \cite{DePalma}).

With the distance $MK_\hb$ defined in \cite{FGMouPaul}, one has
$$
MK_\hb(R,S)\ge 2d\hb\,,\qquad\text{ for all }R,S\in\cD_2(\fH)\,,
$$
so that Theorem \ref{T-IneqT2} (iii) implies that
$$
MK_\hb(R,T)<MK_\hb(R,S)+MK_\hb(S,S)+MK_\hb(S,T)\,.
$$
In other words, $MK_\hb$ satisfies the same ``generalized triangle inequality'' as the DePalma-Trevisan distance $D$, with a strict inequality.

\section{Applications of duality for $\cE_\hb$ III:  Classical/quantum optimal transport and   semiquantum Legendre transform}\label{app3}
\subsection{A classical/quantum optimal transport}\label{opckaqua}
%Soit une densit\'e de proba $r$ sur $\mathbb R^{2d}$ et une matrice densit\'e $S$ sur $L^2(\mathbb R^d)$. 
% 
% On d\'efinit $E(r,S)$ comme la ``distance" ARMA.
 
%On se place dans la situation o\`u la dualit\'e de Kanto marche et l'on a donc une fonction 
 Let $r$ be a probability density on $\bR^{2d}$ and $S$ a density operator on $L^2(\bR^d)$.

We suppose that an optimal operator $\widetilde B$ and an optimal function $\widetilde a$ exists for the Kantorovich duality formulation of  $\cE_\hb(r,S)$, as  in Theorem \ref{T-DualityARMA}, and  that $\widetilde a\in C_b(\bR^{2d})$ and $\widetilde B\in\cL(\fH)$. That is to say that 
%there exists a function $a'(z),\ z\in \mathbb R^{2d}$ and an operator $B'$, such that
$$
\widetilde a(q,p)+\widetilde B\leq (Z-z)^2\mbox{ and }\cE_\hb(r,S)^2=\int_{\bR^{2d}}\widetilde a(z)r(z)dz+\Tr_{L^2(\bR^d)}{(\widetilde BS)}.
$$
Here we have used the notation $z=(q,p)$, $dz=dqdp$, and $Z=(Q,P)$. 

Let us denote by $\Pi(z)$ an optimal coupling of $r,S$ and let us define \begin{eqnarray}
a(z)&:=&\tfrac12(|z|^2-\widetilde a(z))\nn\\
B&:=&\tfrac12(|Z|^2-\widetilde B).\nn
\end{eqnarray}
 One has
$$
(a(z)+B-z\cdot Z)\geq 0 \mbox { and }\Tr_{L^2(\bR^d)}\int_{\bR^{2d}}\Pi(z)^{\frac12}( a(z)+B-z\cdot Z)\Pi(z)^{\frac12}dz=0,
$$
Therefore, since $\Pi(z)^{\frac12}( a(z)+B-z\cdot Z)\Pi(z)^{\frac12}\geq 0$
$$
\Pi(z)^{\frac12}( a(z)+B-z\cdot Z)\Pi(z)^{\frac12}=0\mbox{ a.e},.
$$ 
In other words,
$$
\Pi(z)^{\frac12}( a(z)+B-z\cdot Z)^{\frac12}
\left(\Pi(z)^{\frac12}( a(z)+B-z\cdot Z)^{\frac12}\right)^*
%( a(z)+B-2z\cdot Z)^{\frac12}\Pi(z)^{\frac12}
=0
$$
which implies that
%\ \Rightarrow 
$$( a(z)+B-z\cdot Z)^{\frac12}\Pi(z)^{\frac12}=0 \mbox { a.e.}
$$
and (forgetting the ``a.e." in the sequel)
\be\label{ae}
( a(z)+B-z\cdot Z)\Pi(z)=0.
\ee
% $\pi:\mathcal P(\mathbb R^{2d})\to\mathcal D(L^2(\mathbb R^d))$ est un couplage optimal de $r$ et $S$. 

Hence, the range of  $\Pi$ consists in functions $\mathbb R^{2d}\ni z\mapsto\psi_z\in L^2(\mathbb R^d)$ such that

\be\label{eqvp}
( a(z)+B-z\cdot Z)\psi_z=0\Longleftrightarrow
(B-z\cdot Z)\psi_z=-a(z)\psi_z:
\ee
\centerline{\textit{the vectors $\psi_z$ are the eigenvectors of   $B-z\cdot Z$ with eigenvalue $-a(z)$.}}

 But 
 %we have that 
 $B+a(z)-z\cdot Z\geq 0$. Therefore 
 
 \centerline{\textit{$-a(z)$ is the lowest eigenvalue of $B-2z\cdot Z$.}} 
%(supposons que le spectre de $(z\cdot Z--B)$ n'est pas d\'eg\'en\'er\'e pour l'instant).
 
 From now on, we will suppose that the fundamental of $B-z\cdot Z$ is non degenerate. This means that $\Pi(z)$ is proportional to 
 $|\psi_z\rangle\langle\psi_z|$ and therefore, since $\Pi(z)$ is a coupling between $r$ and $S$, 
$$
\Pi(z)=r(z)|\psi_z\rangle\langle\psi_z|
$$
and
$$
S=\int_{\bR^{2d}}r(z)|\psi_z\rangle\langle\psi_z|dz.
$$

We just prove the following result.
\begin{Thm}\label{trans}
Let $B$ be a bounded optimal Kantorovich operator of $\cE_\hb(r,S)$. 
Let moreover, for each $z\in\bR^{2d}$, $\psi_z$ be the ground state of $B-z\cdot Z$.

Then $S$ admits the following representation
$$
S=\int_{\bR^{2d}}r(z)|\psi_z\rangle\langle\psi_z|dz.
$$
\end{Thm}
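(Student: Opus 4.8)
The plan is to read off the representation of $S$ from the complementary slackness built into the Kantorovich duality of Theorem~\ref{T-DualityARMA}, reorganising the discussion that precedes the statement. Throughout, $\widetilde a\in C_b(\bR^{2d})$ and $\widetilde B=\widetilde B^*\in\cL(\fH)$ are the hypothesised optimal Kantorovich potentials, subject to $\widetilde a(z)I_\fH+\widetilde B\le c(z)=(Z-z)^2$, and $a(z)=\tfrac12(|z|^2-\widetilde a(z))$, $B=\tfrac12(|Z|^2-\widetilde B)$ (so the theorem's $B$ is this derived operator).

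First I would invoke Theorem~\ref{T-DualityARMA} to get an optimal coupling $\Pi\in\cC(r,S)$: a weakly measurable family $z\mapsto\Pi(z)=\Pi(z)^*\ge0$ with $\Tr_\fH\Pi(z)=r(z)$, $\int_{\bR^{2d}}\Pi(z)\dd z=S$, realising $\cE_\hb(r,S)^2=\int\Tr_\fH(\Pi(z)^{1/2}c(z)\Pi(z)^{1/2})\dd z$. Using $\Tr_\fH\Pi(z)=r(z)$ and $\int\Pi(z)\dd z=S$, the duality identity $\cE_\hb(r,S)^2=\int\widetilde a(z)r(z)\dd z+\Tr_\fH(\widetilde BS)$ rewrites as $\int\Tr_\fH\big(\Pi(z)^{1/2}(c(z)-\widetilde a(z)I_\fH-\widetilde B)\Pi(z)^{1/2}\big)\dd z=0$, and since the integrand is nonnegative it vanishes for a.e.\ $z$.

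Second, since $c(z)-\widetilde a(z)I_\fH-\widetilde B=2\,(a(z)I_\fH+B-z\cdot Z)\ge0$, setting $M(z)=(a(z)I_\fH+B-z\cdot Z)^{1/2}$ one factors the a.e.-zero operator as $\big(M(z)\Pi(z)^{1/2}\big)^*\big(M(z)\Pi(z)^{1/2}\big)$, so $M(z)\Pi(z)^{1/2}=0$ and hence $(a(z)I_\fH+B-z\cdot Z)\Pi(z)=0$ for a.e.\ $z$, which is \eqref{ae}. Thus $\Img\,\Pi(z)\subseteq\Ker(a(z)I_\fH+B-z\cdot Z)$: any nonzero $\psi$ in the range of $\Pi(z)$ solves $(B-z\cdot Z)\psi=-a(z)\psi$, and since $B-z\cdot Z+a(z)I_\fH\ge0$ the value $-a(z)$ is the bottom of the spectrum of $B-z\cdot Z$ --- which is discrete, $B-z\cdot Z$ being, up to a bounded self-adjoint term, a unitary conjugate of the harmonic oscillator $\tfrac12|Z|^2$ --- so $\psi$ is a ground state. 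Finally, under the standing non-degeneracy assumption $\Img\,\Pi(z)\subseteq\bC\psi_z$, so the nonnegative self-adjoint operator $\Pi(z)$ equals $\lambda(z)|\psi_z\rangle\langle\psi_z|$ with $\lambda(z)=\Tr_\fH\Pi(z)=r(z)$; integrating in $z$ and using $\int\Pi(z)\dd z=S$ gives the announced representation of $S$.

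The genuinely delicate step --- and the reason the hypothesis $\widetilde B\in\cL(\fH)$ is made --- is passing from $\Pi(z)^{1/2}(a(z)I_\fH+B-z\cdot Z)\Pi(z)^{1/2}=0$ to $(a(z)I_\fH+B-z\cdot Z)\Pi(z)=0$: the operator $a(z)I_\fH+B-z\cdot Z$ is unbounded, so one must first know that $\Pi(z)^{1/2}$ maps $\fH$ into the form domain of $c(z)$ (equivalently of $B-z\cdot Z$) for a.e.\ $z$, in order to justify the factorisation through $M(z)$ and to interpret $M(z)\Pi(z)^{1/2}=0$ as an operator identity; this is exactly the domain-and-measurability information supplied by the finiteness of $\int\Tr_\fH(\Pi(z)^{1/2}c(z)\Pi(z)^{1/2})\dd z$ proved inside Theorem~\ref{T-DualityARMA}. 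A secondary, harmless point is that $z\mapsto\psi_z$, hence $z\mapsto|\psi_z\rangle\langle\psi_z|$, inherits weak measurability from that of $\Pi$, so the integral defining $S$ is meaningful.
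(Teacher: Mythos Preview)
Your proof is correct and follows essentially the same route as the paper's: obtain an optimal coupling from Theorem~\ref{T-DualityARMA}, use the duality equality together with the constraint to force $\Pi(z)^{1/2}(a(z)I_\fH+B-z\cdot Z)\Pi(z)^{1/2}=0$ a.e., factor through the square root to get $(a(z)I_\fH+B-z\cdot Z)\Pi(z)=0$, identify $-a(z)$ as the ground-state energy, and use the non-degeneracy assumption to conclude $\Pi(z)=r(z)|\psi_z\rangle\langle\psi_z|$. Your additional remarks on the discreteness of the spectrum, on the form-domain issue justifying the square-root factorisation, and on the measurability of $z\mapsto|\psi_z\rangle\langle\psi_z|$ are welcome clarifications that the paper leaves implicit.
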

\vskip 1cm
Theorem \ref{trans} suggests
%, by analogy with the T\"oplitz quantization, 
to associate to any probability density $\mu$ the following operator
\be\label{deltopp}
\mu\longrightarrow
\Op^{r,S}_\hb[\mu]:=\int_{\bR^{2d}}|\psi_z\rangle\langle\psi_z|\mu(dz).
\ee

The arrow in \eqref{deltopp} can be seen as  the ``optimal transport", from classical probability densities to quantum density matrices,  transporting $r$ to $S$. 

Note that, for any  density $\mu$,
$$
\Tr\Op^{r,S}_\hb[\mu]=\int_{\bR^{2d}}\mu(dz).
$$

Finally, using \eqref{ae}, we easily show, by analogy with the proof of Theorem 2.6 (b) in \cite{ECFGTPaul}, that, 
%under some extra hypothesis of $a$ and $B$,
when $a\in C^1(\bR^{2d}),\ (\nabla a)r\in C_b(\bR^{2d})$ and, e.g.,  $\psi_z\in \mbox{Dom}(\tfrac1{i\hbar}[Z,B])$ for all $z\in supp(r)$,
\begin{eqnarray}
0&=&
\Pi(z)\tfrac1{i\hbar}[Z,( a(z)+B-z\cdot Z)\Pi(z)]=\Pi_z\tfrac1{i\hbar}[Z, a(z)+B-z\cdot Z]\Pi(z)\nn\\
&=&
\Pi(z)([Z, B]-z)\Pi(z)\nn\\
\mbox{and}&&\nn\\
0&=&\Pi(z)|{Z,( a(z)+B-z\cdot Z)\Pi(z)\}=\Pi(z)\{Z, a(z)+B-z\cdot Z|}\Pi(z)\nn\\
&=&
\Pi(z)(\nabla a(z)-Z)\Pi(z).\nn
\end{eqnarray}
Therefore the (classical and quantum) ``gradient" aspect appears in the following expressions
\begin{eqnarray}
\langle\psi_z|Z|\psi_z\rangle&=&\nabla a(z)\nn\\
z&=&\langle\psi_z|Z|\nabla^QB\psi_z\rangle\nn
\end{eqnarray}
where $\nabla^Q:=\tfrac1{i\hbar}[JZ,\cdot]$ with $J$ the symplectic matrix defined by $\{f,g\}=\nabla f\cdot J\nabla g$, as introduced and motivated in \cite[Section 1]{ECFGTPaul}.

\vskip 1cm
Let us finish this section by an example. Suppose that
$$
S=\Op^T_\hb((2\pi\hb)^dr).
$$
In this case, one knows, \cite[Theorem 2.4 (1)]{FGPaul} (note a difference of normalization: in \cite{FGPaul}, $E_\hb^2=\tfrac12\cE_\hb^2$),
$$
\cE_\hb(r,S)^2=d\hbar=\int_{\bR^{2d}}\widetilde a(z)r(z)dz+\Tr(\widetilde BS)\mbox{ with }\widetilde a=0,\ \widetilde B=d\hbar I_\fH .
$$
Since $(q-x)^2+(p+i\hbar\nabla_x)^2\geq d\hbar I_\fH=\widetilde a(z)I_\fH+\widetilde B$, $\widetilde a$ and $ \widetilde B$ are optimal and
$$
a(q,p)=\tfrac12{|z|^2}\mbox{ and } B=\tfrac12(|Z|^2-d\hbar).
$$
Hence
$$
a(z)+B-z\cdot Z=\tfrac12(-\nabla_x+x-(q+ip))(\nabla_x+x-(q-ip)),
$$
 the solution of \eqref{eqvp} is
$$
\psi_z=(\pi\hbar)^{-d/4}e^{-\frac{(x-q)^2}{2\hbar}}e^{i\frac{p.x}\hb}
$$
and Theorem \ref{trans} expresses back that $
S=\Op^T_\hb((2\pi\hb)^dr)
$ and 
$$\Op^{\mu,\Op^T_\hb(\mu)}_\hb=\Op^T_\hb
$$
 for any probability density $\mu$.

%Let us suppose now that
%$$
%S=\Op^T_\hb((2\pi\hb)^ds),\ s\neq r.
%$$

%\vskip 3cm
%De plus, par la caract\'esisation spectrale des valeurs propres
%$$
%a(z)=\inf_{\phi\in 
%%L^2(\mathbb R^d)
%\Dom(B)}(z.\langle\phi|Z|\phi\rangle-
%\langle\phi|B|\phi\rangle).
%$$
%Je pense que l'infimum peut \^etre un extremum, mais on a l\`a la version semiquantique de la transform\'ee de Legendre ou Lax-Hopf.

%Moreover, by the same argument than in the proof of Theorem 2.6 in \cite{ECFGTPaul}, we get easily that, 
%%under the condition, e.g., that $B$ is bounded,
%
%%Mais il y a plus. En appliquant l' argument de la d\'erivation par commutateur, on obtient que, sous la condition de simplicit\'e du spectre de $(z\cdot Z--B)$, 
%%%et en prenant une base orthonorm\'ee de $L^2(\mathbb R^d)$ de la forme $\{\phi_0=\psi_z,\phi_1,\dots\}$, 
%%on obtient que  (je prends la constante de Planck \'egale \`a un)
%$$ 
%\langle\psi_z|(z-\tfrac1{i\hbar}[JZ,B])\psi_z\rangle=0.
%%\ \forall i=0,\dots.
%$$
%Therefore
%%$$
%%\langle\psi_z|(z-\frac1{i\hbar}[JZ,B])\psi_z\rangle=0,
%%$$
%%soit 
%$$
%\langle\psi_z|\tfrac1{i\hbar}[JZ,B]\psi_z\rangle=z\
%$$
%
% and appears the ``transport" $z\to \tfrac1{i\hbar}[JZ,B]$ in the transformation
% $$
% r\to S.
% $$
 
\subsection{A semiquantum Legendre
%-Lax-Hopf 
transform}\label{llht}
  As we have seen, $-a(z)$ is the fundamental of the operator $B-z\cdot Z$. Therefore, by the variational characterization of the lowest eigenvalue, 
 %De plus, par la caract\'esisation spectrale des valeurs propres
$$
-a(z)=\inf_{\substack{\phi\in 
%L^2(\mathbb R^d)
\Dom(B)\\\|\phi\|_\fH=1}}(
\langle\phi|B|\phi\rangle-z\cdot\langle\phi|Z|\phi\rangle),
$$
to be faced to the classical definition of the Legendre transform
$$
a(z)=\sup_{z'}(z\cdot z'-
b(z')).
$$
Let us define the semiquantum Legendre transform by
$$
B^{sq*}:=\sup_{\substack{\phi\in 
%L^2(\mathbb R^d)
\Dom(B)\\\|\phi\|_\fH=1}}(z\cdot\langle\phi|Z|\phi\rangle-
\langle\phi|B|\phi\rangle).
$$
\begin{Thm}\label{sqleg}
Let $a(z)=\tfrac12(|z|^2-\widetilde a(z)),
B=\tfrac12(|Z|^2-\widetilde B)$ where $\widetilde a(z)$ and $
\widetilde B$ are  bounded optimal Kantorovich potentials for $\cE_\hb(r,S)$. Then
$$
a=B^{sq*}.
$$
\end{Thm}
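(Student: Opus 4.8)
The plan is to deduce Theorem~\ref{sqleg} from the spectral information already assembled in \S\ref{opckaqua} together with a clean application of the Rayleigh--Ritz variational principle: once the pre-statement fact ``$-a(z)$ is the ground state energy of $B-z\cdot Z$'' is in hand, the theorem is obtained by negating the min--max formula. First I would rewrite $B-z\cdot Z$ by completing the square. With $z\cdot Z=qQ+pP$,
$$B-z\cdot Z=\tfrac12\big(|Z|^2-\widetilde B\big)-z\cdot Z=\tfrac12\,c(z)-\tfrac12|z|^2-\tfrac12\widetilde B\,,$$
where $c(z)=(Z-z)^2$ is, up to the factor $2$, the phase-space translate by $z$ of the harmonic oscillator $H=\tfrac12(|x|^2-\hb^2\Dlt_x)$ (recall $|Z|^2=2H$). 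Since the Weyl translation leaves $\Dom(H)$ invariant and $\widetilde B\in\cL(\fH)$ is bounded, $B-z\cdot Z$ is self-adjoint with $\Dom(B-z\cdot Z)=\Dom(B)$, bounded below, and has compact resolvent; hence its spectrum is purely discrete and its infimum $\lambda_0(z)$ is an eigenvalue, with a one-dimensional eigenspace under the nondegeneracy hypothesis. By the Rayleigh--Ritz characterization of the bottom eigenvalue,
$$\lambda_0(z)=\inf_{\phi\in\Dom(B),\ \|\phi\|_\fH=1}\big(\langle\phi|B|\phi\rangle-z\cdot\langle\phi|Z|\phi\rangle\big)=-\,B^{sq*}(z)\,,$$
so Theorem~\ref{sqleg} reduces to showing $\lambda_0(z)=-a(z)$.

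For that I would prove the two inequalities separately. The bound $\lambda_0(z)\ge-a(z)$ is exactly the operator inequality $a(z)I_\fH+B-z\cdot Z\ge0$ recorded before the statement: it follows from optimality of $\widetilde a,\widetilde B$, since $\widetilde a(z)I_\fH+\widetilde B\le c(z)$ gives $a(z)+B=\tfrac12\big(|z|^2+|Z|^2-\widetilde a(z)-\widetilde B\big)\ge z\cdot Z$; taking expectations in unit vectors of $\Dom(B)$ and passing to the infimum yields $\lambda_0(z)\ge-a(z)$, and this half holds for \emph{every} $z\in\bR^{2d}$, i.e. $B^{sq*}(z)\le a(z)$ always. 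For the reverse bound I would invoke \eqref{ae}: when $z\in\Supp(r)$ the coupling $\Pi(z)$ is nonzero, and $(a(z)I_\fH+B-z\cdot Z)\Pi(z)=0$ forces every unit vector $\psi_z$ in the range of $\Pi(z)$ to satisfy $(B-z\cdot Z)\psi_z=-a(z)\psi_z$, so $-a(z)\in\mathrm{spec}(B-z\cdot Z)$ and hence $\lambda_0(z)\le-a(z)$. Combining the two inequalities gives $\lambda_0(z)=-a(z)$ on $\Supp(r)$, whence $a(z)=-\lambda_0(z)=B^{sq*}(z)$ there, which is the assertion $a=B^{sq*}$.

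The step I expect to be the main obstacle is the justification of the Rayleigh--Ritz identity for the \emph{a priori} unbounded operator $B-z\cdot Z$ --- both that $\inf\{\langle\phi|(B-z\cdot Z)|\phi\rangle:\phi\in\Dom(B),\ \|\phi\|_\fH=1\}$ equals $\inf\mathrm{spec}(B-z\cdot Z)$ and that this infimum is attained at the ground state. This is entirely dispatched by the completion of the square above, which exhibits $B-z\cdot Z$ up to an additive constant as $\tfrac12$ times a phase-space translate of the harmonic oscillator; for the latter the discreteness of the spectrum, the invariance of $\Dom(H)$ under Weyl translations, and the min--max formula are standard. A secondary point worth flagging in the write-up is that the identity $a=B^{sq*}$ is obtained pointwise on $\Supp(r)$: off the support one retains only $B^{sq*}(z)\le a(z)$, with equality restored for all $z$ precisely when $\widetilde a$ is taken to be the maximal Kantorovich potential compatible with $\widetilde B$, namely $\widetilde a(z)=\inf\mathrm{spec}\big(c(z)-\widetilde B\big)$.
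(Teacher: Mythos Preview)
Your proposal is correct and follows essentially the same route as the paper: both arguments combine the operator inequality $a(z)I_\fH+B-z\cdot Z\ge 0$ (from the Kantorovich constraint) with the existence of the null vector $\psi_z$ established in \eqref{ae}--\eqref{eqvp} to identify $-a(z)$ as the bottom of the spectrum of $B-z\cdot Z$, and then invoke the variational characterization of the ground state to conclude. The paper's proof is a two-line sketch of the Rayleigh--Ritz step, relying entirely on the spectral facts already recorded in \S\ref{opckaqua}; your write-up simply unpacks those facts and adds two refinements the paper omits --- the completion of the square $B-z\cdot Z=\tfrac12 c(z)-\tfrac12|z|^2-\tfrac12\widetilde B$ to justify self-adjointness and discreteness of the spectrum, and the observation that the equality $a=B^{sq*}$ is established pointwise on $\Supp(r)$ only.
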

\begin{proof}
We just recall the variational argument. 

Let $A\geq 0$  and $A|\phi_0\rangle=0$.  Then, 
$$
\langle\phi_0|A|\phi_0\rangle\leq
\inf_{\substack{\phi\in 
%L^2(\mathbb R^d)
\Dom(B)\\\|\phi\|_\fH=1}}
\langle\phi|A|\phi\rangle
$$
 and
 %  by \eqref{deltaaa},
$$
\langle\phi_0+\delta\phi_0|A|\phi_0+\delta\phi_0\rangle=\langle\delta\phi|A|\delta\phi).
$$
%Moreover, 
\end{proof}
\vskip 5cm

\end{document}